\newtheorem{theoremalph}{Theorem}
\newtheorem*{Main Theorem}{Main Theorem}
\newtheorem{Theorem}{Theorem}[section]
\newtheorem*{Theorem A}{Theorem A}
\newtheorem*{Theorem A'}{Theorem A'}
\newtheorem*{Theorem B'}{Theorem B'}
\newtheorem{Proposition}[Theorem]{Proposition}
\newtheorem{Lemma}[Theorem]{Lemma}
\newtheorem{Remark}[Theorem]{Remark}
\newtheorem{Remark-numbered}[Theorem]{Remark}
\newtheorem*{Claim}{Claim}
\newtheorem{Claim-numbered}[Theorem]{Claim}
\newtheorem*{Acknowledgements}{Acknowledgements}
 \def\NN{{\mathbb N}}
 \def\ZZ{{\mathbb Z}}
   \def\cR{{\cal R}}
\def\dim{\operatorname{dim}}
\def\diam{\operatorname{Diam}}
\begin{document}

\title{Measures of maximal entropy for $C^\infty$ three-dimensional flows}

\author{Yuntao Zang\footnote{
Yuntao Zang is partially supported by National Key R\&D Program of China (2022YFA1005802) and NSFC (12201445, 12471186).
}}

\maketitle

\begin{abstract} 

\medskip

We prove for $C^\infty$ non-singular flows on three-dimensional compact manifolds with positive entropy, there are at most finitely many ergodic measures of maximal entropy. This result extends the notable work of Buzzi-Crovisier-Sarig (\emph{Ann. of Math.}, 2022) on surface diffeomorphisms. Our approach differs by addressing the continuity of Lyapunov exponents and the uniform largeness of Pesin sets for measures of maximal entropy. Furthermore, it also provides an alternative proof for the case of surface diffeomorphisms.
\end{abstract}
\tableofcontents

\section{Introduction}
\subsection{Main results}
Let $X$ be a vector field over a compact Riemannian manifold $M$ without boundary and let $\varphi=\{\varphi^{t}\}$ be the flow generated by $X$. The variational principle in ergodic theory relates the topological entropy (denoted by $h_{\rm top}(X)$ or $h_{\rm top}(\varphi)$) and the measure-theoretic entropy (denoted by $h(X,\mu)$ or $h_{\rm top}(\varphi,\mu)$ for an invariant measure $\mu$): $$h_{\rm top}(X)=\sup_{\mu: \text{ invariant}}h(X,\mu)=\sup_{\mu: \text{ ergodic}}h(X,\mu).$$ An invariant measure $\mu$ is said to be \emph{of maximal entropy} if $h(X,\mu)=h_{\rm top}(X)$, i.e., $\mu$ realizes the supremum in the above variational principle. We often use the abbreviation "MME" for "measure of maximal entropy" throughout this paper. For general dynamical systems, there might be no ergodic MMEs or infinitely many ergodic MMEs. So it is natural to ask whether a given system admits finitely many ergodic MMEs. It was first established for uniformly hyperbolic systems, such as Axiom A diffeomorphisms \cite{Bow71} and flows \cite{Bow74}, subshifts of finite type \cite{Par64}. Beyond uniformly hyperbolic systems, it was  later extended to non-uniformly expanding maps and non-uniformly hyperbolic diffeomorphisms including topologically transitive Markov shifts with countable states \cite{Gur70}, smooth interval maps \cite{Buz97}, systems derived from Anosov \cite{BuF13}, \cite{CTT18},\cite{Ure12}, partially hyperbolic diffeomorphisms \cite{HHT12},\cite{MoP24}, surface diffeomorphisms with large entropy \cite{BCS22'}.

In this paper, we prove:

\begin{theoremalph}\label{Thm:finiteness C infity}
Let $X$ be a $C^\infty$ non-singular vector field over a three-dimensional compact manifold $M$ with $h_{\rm top}(X)>0$. There are only finitely many ergodic measures of maximal entropy.
\end{theoremalph}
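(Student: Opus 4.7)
The plan is to adapt the strategy of Buzzi--Crovisier--Sarig to the setting of $C^\infty$ three-dimensional flows. The overall scheme is (i) show that every ergodic MME is hyperbolic in the directions transverse to the flow, (ii) obtain uniform lower bounds on the Lyapunov exponents and uniform sizes of Pesin blocks across the family of \emph{all} ergodic MMEs, and (iii) convert the uniform Pesin estimates into a symbolic model in which each ergodic MME corresponds to an irreducible component, then use the uniformity to conclude that only finitely many such components can support an MME. A key structural fact available in the $C^\infty$ category is Newhouse's theorem that the entropy map $\mu\mapsto h(X,\mu)$ is upper semi-continuous, so the set of MMEs is weak-$*$ compact.

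First I would check that every ergodic MME $\mu$ satisfies $\chi_1(\mu)>0=\chi_2(\mu)>\chi_3(\mu)$. Since $h_{\rm top}(X)>0$, Ruelle's inequality applied to both $\varphi^t$ and $\varphi^{-t}$ forces the two extreme transverse exponents to be nonzero, so $\mu$ is hyperbolic. The next, and central, step is the uniform hyperbolicity of the family of ergodic MMEs: there should exist $\chi_0>0$ such that $\chi_1(\mu)\geq \chi_0$ and $\chi_3(\mu)\leq -\chi_0$ for every ergodic MME $\mu$. I would argue by contradiction: if not, a sequence $\mu_n$ of ergodic MMEs has $\chi_1(\mu_n)\to 0$ (or symmetrically); along a subsequence $\mu_n$ weak-$*$ converges to an invariant $\mu_\infty$, which by upper semi-continuity is again an MME. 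I would then combine the Margulis--Ruelle inequality with continuity of the Lyapunov exponent functional along sequences of MMEs (the main use of the $C^\infty$ hypothesis, through Yomdin's theorem and careful tracking of the flow direction) to obtain $h(\mu_\infty)<h_{\rm top}(X)$, a contradiction.

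Once uniform transverse exponents are in hand, standard non-uniformly hyperbolic theory for flows (Pesin, Katok, Barreira--Pesin) yields uniform largeness of Pesin blocks: there is $\alpha>0$ and a measurable set $\Lambda\subset M$ carrying uniformly sized local stable and unstable plaques such that $\mu(\Lambda)\geq \alpha$ for every ergodic MME $\mu$. With this uniform Pesin structure in place, I would invoke the Sarig-type symbolic extension for $C^{1+\beta}$ three-dimensional flows without fixed points due to Lima--Sarig, which represents every hyperbolic measure via a Bowen-type factor map from a countable topological Markov shift that is finite-to-one on a set of full measure. Each ergodic MME lifts to an ergodic MME of some irreducible component; the uniform lower bound on Pesin-block sizes translates into a uniform combinatorial lower bound on the components that can support an MME, and, following BCS22, only finitely many such components exist.

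The main obstacle will be the uniform hyperbolicity of MMEs, i.e.\ the control $\chi_1(\mu),\,|\chi_3(\mu)|\geq \chi_0$ simultaneously over all ergodic MMEs. The flow direction contributes a genuine zero exponent, and separating this from a possibly vanishing transverse exponent in a weak-$*$ limit of MMEs is precisely the "continuity of Lyapunov exponents" issue flagged in the abstract. I expect the proof to proceed by a quantitative refinement of Yomdin's volume estimates in the direction normal to $X$, combined with Pliss-type arguments on a return map to a well-chosen family of cross-sections to upgrade upper semi-continuity of entropy into effective control on the transverse exponents along convergent sequences of MMEs.
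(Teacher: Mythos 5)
Your high-level schema (uniform hyperbolicity of MMEs, uniform Pesin blocks, then a symbolic/homoclinic reduction) matches the paper's, but there are two places where your plan either understates or skips the genuine difficulty.

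First, the step from ``uniform lower bounds on the transverse Lyapunov exponents'' to ``uniform size of Pesin blocks'' is exactly the gap the paper identifies (quoting BCS22'): a uniform bound $\chi_1(\mu)\geq\chi_0$ over all MMEs does \emph{not} by itself give a common Pesin set of uniformly positive measure, because the time required for each measure to realize its exponent need not be uniformly bounded. Continuity of the Lyapunov exponents as numbers, which is what your Yomdin/cross-section argument aims to produce, is insufficient. The paper's actual mechanism is convergence of the stable and unstable \emph{lifts} $\hat\mu_k^{\pm}\to\hat\mu^{\pm}$ in the projective tangent bundle (Theorem \ref{continuity of exponents for flow}, proved via Burguet's refinement of BCS22). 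This stronger convergence of measures on $\hat M$, combined with Pliss-type estimates on the lifted dynamics (Lemmas \ref{Lem: Pliss-Like Lemma1}, \ref{Lem: Pliss-Like Lemma2}) and a comparison between weak Pesin sets and Pesin sets (Theorem \ref{Pro:compare-two-sets}), is what forces nearby MMEs onto a common Pesin block $\mathcal R_{C,\chi,\varepsilon}$. Your proposal mentions Pliss arguments but applies them to exponent continuity rather than to lift convergence; that route would still leave the ``time to hyperbolicity'' problem unresolved.

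Second, your final step --- ``the uniform lower bound on Pesin-block sizes translates into a uniform combinatorial lower bound on the components that can support an MME'' --- is not an argument. The paper does not attempt to count irreducible components of a Markov coding directly. Instead, once it has two distinct MMEs charging the same Pesin block $\mathcal R_{C,\chi,\varepsilon}$ with positive measure, it covers that block by finitely many $\tau/2$-balls, finds two distinct MMEs charging the same ball, uses Theorem \ref{Thm:Pesin} to produce transverse intersections of local stable and unstable manifolds, concludes the two MMEs are homoclinically related, and then invokes the Buzzi--Crovisier--Lima result (that a homoclinic class carries at most one ergodic measure of maximal entropy within that class) to force equality --- a contradiction. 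You should replace the vague ``combinatorial lower bound'' by this explicit homoclinic-relation argument; and you should cite \cite{BCL23} (which is where the flow version lives) rather than only Lima--Sarig coding and \cite{BCS22}, since you need the uniqueness-within-a-homoclinic-class statement, not merely a symbolic factor.
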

\begin{Remark}We make several comments.
	\begin{itemize}
		\item This can be viewed as a flow version of Buzzi-Crovisier-Sarig's notable work \cite{BCS22'} on the finiteness of MMEs  for $C^{\infty}$ surface diffeomorphisms. However, our approach differs substantially, as it is based on two other influential works by Buzzi-Crovisier-Sarig \cite{BCS22} and Burguet \cite{Bur24} concerning the continuity of Lyapunov exponents. Our approach also provides an alternative proof of the finiteness of MMEs for $C^{\infty}$ surface diffeomorphisms.
		\item The condition \emph{“non-singular”} can be somewhat relaxed in a certain sense. In fact, it is only required to ensure that the Oseledets sub-bundles corresponding to the positive and negative Lyapunov exponents of a MME are one-dimensional. In other words, the entropy is concentrated along curves (the stable/unstable manifolds). The assumption that $X$ is non-singular and three-dimensional arises for the same reason.
		\item Our work does not allow us to establish analogous results to those obtained by Buzzi-Crovisier-Sarig \cite{BCS22'} for surface diffeomorphisms-namely, the uniqueness of the measure of maximal entropy for topologically transitive systems and the Bernoulli property of this measure in the topologically mixing case. These conclusions in their paper rely heavily on the two-dimensional setting, exploiting geometric and spectral structures specific to surface dynamics.
		
	\end{itemize}
	
\end{Remark}
Indeed, we will prove a finite smooth version of Theorem \ref{Thm:finiteness C infity} which is more general. In this paper, whenever we consider a $C^r$ ($r > 1$) diffeomorphism or vector field (flow), we allow $r$ to be a non-integer (i.e., a real number).

Recall that $\varphi=\{\varphi^{t}\}$ be the flow generated by $X$. We define the asymptotic dilation of $X$, $$\lambda^{+}(X):=\lim_{n\to+\infty}\frac{1}{n}\log\|D\varphi^{n}\|, \quad \lambda^{-}(X):=\lim_{n\to+\infty}\frac{1}{n}\log\|D\varphi^{-n}\|$$ where $\|D\varphi^{n}\|:=\sup_{x}\|D\varphi^{n}_{x}\|.$
\begin{theoremalph}\label{Thm:finiteness C r}
	Let $X$ be a $C^r(r>1)$ non-singular vector field over a three-dimensional compact manifold $M$. Assume that $h_{\rm top}(X)>\frac{1}{r}\min\{\lambda^{+}(X), \lambda^{-}(X)\}$, then $X$ has only finitely many ergodic measures of maximal entropy.
\end{theoremalph}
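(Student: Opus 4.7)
The strategy follows the Buzzi--Crovisier--Sarig program for surface diffeomorphisms, adapted to flows via the two key ingredients highlighted in the abstract: continuity of Lyapunov exponents from \cite{BCS22}, \cite{Bur24}, and a uniform largeness estimate for Pesin blocks of MMEs. First I would reduce the three-dimensional non-singular flow to two-dimensional return dynamics by choosing a finite family of transverse Poincar\'e sections. Non-singularity guarantees such sections cover $M$ with bounded return times, and the Abramov formula relates the entropy of the flow to that of the return map. Dimension three forces the transverse bundle to be two-dimensional, so along it any ergodic measure carries at most one positive and one negative Lyapunov exponent (the flow direction itself being neutral). This places us in a setting formally parallel to surface diffeomorphisms.

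Second, I would use the hypothesis to upgrade all MMEs to a uniformly hyperbolic regime. By the Ruelle inequality, every ergodic $\mu$ satisfies $h(X,\mu) \leq \min\{\lambda^+(\mu),-\lambda^-(\mu)\}$, while $\lambda^+(\mu) \leq \lambda^+(X)$ and $-\lambda^-(\mu) \leq \lambda^-(X)$. Thus for any ergodic MME $\mu$,
\[
h_{\rm top}(X) \leq \lambda^+(\mu) \leq \lambda^+(X), \qquad h_{\rm top}(X) \leq -\lambda^-(\mu) \leq \lambda^-(X),
\]
so the exponents are uniformly bounded away from zero, with an explicit gap
\[
h_{\rm top}(X) - \tfrac{1}{r}\min\{\lambda^+(\mu),-\lambda^-(\mu)\} \;\geq\; h_{\rm top}(X) - \tfrac{1}{r}\min\{\lambda^+(X),\lambda^-(X)\} \;>\; 0.
\]
Combined with the continuity of Lyapunov exponents from \cite{BCS22}, \cite{Bur24} and upper semicontinuity of entropy (which itself follows from the $C^r$ entropy estimates of Burguet under the present hypothesis), this places the entire set of ergodic MMEs in a compact, non-uniformly hyperbolic locus with uniform exponents.

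Third, and most crucially, I would establish a uniform largeness estimate for Pesin blocks: there exist $\chi>0$, $\varepsilon>0$, $\alpha>0$ such that for \emph{every} ergodic MME $\mu$, a common Pesin block $\Lambda(\chi,\varepsilon)$ satisfies $\mu(\Lambda(\chi,\varepsilon)) \geq \alpha$. The entropy gap above together with Yomdin--Burguet volume-growth estimates in regularity $C^r$, transferred from the return map via the Poincar\'e section construction of step one, should provide such a uniform bound. Finally, using the uniform Pesin block $\Lambda(\chi,\varepsilon)$, I would apply Sarig-type countable Markov partitions to code a full-measure subset of each ergodic MME by a topologically transitive countable state Markov shift, and invoke Gurevich's theorem that each irreducible component of such a shift carries at most one MME. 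The uniform lower bound $\alpha$ on $\mu(\Lambda(\chi,\varepsilon))$, together with a local finiteness argument for the partition elements, caps the number of irreducible components, yielding the claimed finiteness.

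I expect the main obstacle to lie in step three: Pesin theory readily gives a block of positive measure for any fixed hyperbolic ergodic measure, but obtaining a \emph{common} Pesin block of uniformly positive measure across the entire family of MMEs requires delicate Yomdin-type bounds in the $C^r$ category, crucially combined with the continuity of Lyapunov exponents from step two. The adaptation to the flow setting must be done carefully so as not to spoil the return time control inherited from the Poincar\'e sections, and ensuring that the transverse bundle does not develop near-degeneracies at weak-$*$ limits of MMEs is the most delicate technical point.
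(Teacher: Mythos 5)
Your high-level skeleton matches the paper's: use $C^r$ continuity of Lyapunov exponents (Buzzi--Crovisier--Sarig and Burguet), derive uniform largeness of Pesin blocks across the family of MMEs, then finish via countable-state Markov coding and Gurevich's theorem. Step 2 (Ruelle's inequality forces exponents of every MME to exceed $h_{\rm top}(X)$) and the final step (the paper cites Buzzi--Crovisier--Lima \cite[Corollary~1.2]{BCL23}, which packages the Sarig-type coding for three-dimensional flows and shows that homoclinically related MMEs coincide; combined with a pigeonhole argument on a finite cover of a compact Pesin block, this yields finiteness) are essentially the paper's. But there are two substantive deviations, one of which is a genuine gap.

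First, the paper never reduces to Poincar\'e return maps. It works directly with the time-one map $\varphi^1$ on $M$ and, crucially, with the induced dynamics $\hat\varphi$ on the projective tangent bundle $\hat M$. Your Poincar\'e section route would introduce discontinuities of the return map at section boundaries, which breaks the $C^r$ Yomdin-type estimates you want to apply, and it forces you to translate both the hypothesis $h_{\rm top}(X)>\frac{1}{r}\min\{\lambda^+(X),\lambda^-(X)\}$ and the Burguet machinery through a nonconstant roof function. The paper sidesteps all of this: the entropy hypothesis, the asymptotic dilations, and the lift dynamics are all formulated for the time-one map.

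Second, and this is the real gap, you correctly identify the uniform Pesin block estimate as the crux but do not supply a mechanism, only the slogan ``Yomdin--Burguet volume-growth estimates combined with continuity of Lyapunov exponents should provide a uniform bound.'' The paper is explicit that continuity of the exponents alone is insufficient: even if $\lambda^\pm(\mu_k)\to\lambda^\pm(\mu)$, the Pliss times at which these exponents are realized could escape to infinity, so nearby MMEs need not share a Pesin block (this is exactly the obstruction noted on \cite[p.~433]{BCS22'}). What the paper actually proves is stronger: convergence of the stable/unstable \emph{lifts} $\hat\mu_k^\pm\to\hat\mu^\pm$ as measures on $\hat M$ (Theorem~\ref{continuity of exponents for flow}, obtained by adapting Burguet's entropy-splitting argument to the three-dimensional flow setting). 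From lift convergence, a Pliss-type lemma applied on $\hat M$ (Lemmas~\ref{Lem: Pliss-Like Lemma1}, \ref{Lem: Pliss-Like Lemma2}) gives uniform largeness of a \emph{weak} Pesin set $\cR_{C,\chi}$ (Theorem~\ref{General result of Thm:limit measure carries large exponent everywhere}). Then a separate, purely measure-theoretic comparison (Theorem~\ref{Pro:compare-two-sets}, via the tempering Lemma~\ref{SPR for function}) upgrades weak Pesin sets to genuine Pesin sets $\cR_{C,\chi,\varepsilon}$ uniformly over all invariant measures. Yomdin--Burguet estimates live entirely inside the proof of the lift-convergence theorem; the passage from there to the common Pesin block is not a volume-growth argument at all. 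Your sketch leaves this entire chain unbuilt, and with the Poincar\'e section framing it is not clear it can be built as stated.
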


\begin{Remark}We make several comments.
	\begin{itemize}
		\item We note that if we assume $X$ is $C^\infty$, the term $\frac{1}{r}\min\{\lambda^{+}(X), \lambda^{-}(X)\}$ can be replaced by zero, making Theorem \ref{Thm:finiteness C infity} is a direct consequence of Theorem \ref{Thm:finiteness C r}.
		\item The threshold $\frac{1}{r}\min\{\lambda^{+}(X), \lambda^{-}(X)\}$ may be sharp for the finiteness of ergodic MMEs in some sense (see \cite[Conjecture 1]{BCS22'} and \cite{Buz14}.  Our term $\frac{1}{r}\min\{\lambda^{+}(X), \lambda^{-}(X)\}$ is inspired by the work of Buzzi-Crovisier-Sarig \cite{BCS22} (where the corresponding threshold is $\frac{\lambda^{+}(f)+\lambda^{-}(f)}{r-1}$) and Burguet \cite{Bur24} (where the corresponding threshold is $\frac{\lambda^{+}(f)}{r}$), both of which analyze the continuity of positive and negative Lyapunov exponents (see Theorem \ref{invariant Cr case}) for a $C^{r}$ surface diffeomorphism $f$. To prove Theorem \ref{Thm:finiteness C r}, we primarily adopt Burguet's approach, as it yields a slightly sharper threshold.
		\item In Buzzi-Crovisier-Sarig's work \cite[Section 1.3]{BCS22'}, a more general result for measures with large entropy (not necessarily MME)is presented. To be precise, for a $C^{r}$ surface diffeomorphism $f$, the number of homoclinic classes carrying ergodic measures with entropy strictly large than $\frac{\lambda^{+}(f)}{r}$ is finite. However, we are unable to establish an analogous result for flows, primarily due to the absence of results on the continuity of Lyapunov exponents in finite smoothness for measures with large entropy, which forms the basis of our approach. 
		
	\end{itemize}
	
\end{Remark}

Let us remark on Buzzi-Crovisier-Sarig's recent work \cite{BCS22'} on the finiteness of MMEs for $C^{\infty}$ surface diffeomorphisms with positive entropy. There are three steps in their approach (see \cite[Section 1.10]{BCS22'}) to prove the finiteness of MMEs (an additional subsequent step proves the uniqueness of MME assuming transitivity). The first step is to associate each ergodic hyperbolic measure a homoclinic class (two ergodic hyperbolic measures share the same homoclinic class if they are homoclinically related) which is based on Pesin theory and is not restricted to two dimensional systems. The second step is to prove that there are at most finitely many such homoclinic classes. In this step which plays a key role, several strongly 2-dimensional arguments  are used. They introduced and heavily used a special kind of topological disks called \emph{$su$-quadrilaterals} to show the transverse intersections between stable and unstable manifolds (which gives that two `nearby' ergodic hyperbolic measures with large entropy must be homoclinically related and share the same homoclinic class). The third step is to code each homoclinic class in a finite-to-one way by an irreducible Markov shift with countable states. The irreducibility is the main novelty which improves Sarig's coding in \cite{Sar13}. By the work of Gurevich \cite{Gur70}, irreducible Markov shift with countable states can have at most one MME and consequently, each homoclinic class can carry at most one MME which proves the finiteness of MMEs. We note that the main techniques in the third step are generalized to the three-dimensional case for non-singular flows by Buzzi-Crovisier-Lima \cite{BCL23}.

For $C^{\infty}$ three-dimensional non-singular flows, to prove the finiteness of MMEs, the main difficulty arises from the second step mentioned above where the concept of $su$-quadrilaterals is quite restricted to the two dimensional case. Inspired by the work of Buzzi-Crovisier-Sarig \cite{BCS22} and Burguet \cite{Bur24} on the continuity of Lyapunov exponents, we develop a new approach to overcome this difficulty without relying on a substitute for $su$-quadrilaterals. More precisely, by using the convergence of the unstable lifts of MMEs, we can directly show that for MMEs, the geometry of the stable and unstable manifolds of points in a set with positive measure is uniformly controlled. Consequently, any two nearby MMEs must be homoclinically related which leads to the finiteness of MMEs. Indeed, our approach is natural to be expected: since the Lyapunov exponents of MMEs are uniformly bounded away from zero (except the flow direction which always carries zero exponent), MMEs should have uniformly large measure on a Pesin set where the hyperbolicity is well controlled. In some sense, this very general idea is already mentioned in Buzzi-Crovisier-Sarig's work \cite[Page 433]{BCS22'}, but they commented that they were not able to follow this approach. The main problem is that even though the Lyapunov exponents of MMEs are uniformly bound away from zero, the time required to reach these exponents may not be uniformly bounded, potentially resulting in the undesirable case where nearby MMEs do not share the same Pesin set. Following two notable works of Buzzi-Crovisier-Sarig \cite{BCS22} and Burguet \cite{Bur24} on the continuity of the Lyapunov exponents, we show this cannot occur for $C^{\infty}$ systems. In other words, nearby MMEs must have uniformly large measure on a common Pesin set. We also note that if we assume some partial hyperbolicity (as considered in a recent work by Mongez-Pacifico \cite{MoP24} where they assume unstable entropy exceeds the stable entropy), the aforementioned problem can be directly resolved (See Remark 4.1 in \cite{MoP24}). 

Our approach also has certain limitations. Since we do not thoroughly address the homoclinic class, we do not obtain a spectral decomposition as in Buzzi-Crovisier-Sarig's work (see \cite[Theorem 1]{BCS22'}), nor do we provide a description of the dynamics within each homoclinic class (see \cite[Theorem 2]{BCS22'}). As a result, we are unable to establish the uniqueness under the assumption of transitivity.

We also highlight a recent work by Buzzi-Crovisier-Sarig \cite{BCS25}, in which they introduced the notion of \emph{strong positive recurrence (SPR)}. They demonstrated that surface diffeomorphisms with large entropy satisfy the SPR property and established several significant consequences, such as the existence and finiteness of measures of maximal entropy (MMEs), exponential decay of correlations, and more. Our Theorem \ref{Thm:limit measure carries large exponent everywhere} shares some similarities with their SPR property. However, the main difference is that we allow the presence of a center direction (the flow direction), whereas their SPR property is defined only for a Pesin set with a hyperbolic splitting, i.e., $T_{x}M = E_{x}^{s} \oplus E_{x}^{u}$ (see Definition 1.2 in \cite{BCS25}).

\subsection{Outline of the proof of Theorem \ref{Thm:finiteness C r}}

\paragraph{\it{\bfseries Step 1. Convergence of stable/unstable lifts of MMEs}}

Let $\mu_{k}$ be a sequence of ergodic MMEs converging to some invariant measure $\mu$ (which is also a MME). We show, under the condition $h_{\rm top}(X)>\frac{1}{r}\min\{\lambda^{+}(X), \lambda^{-}(X)\}$, that $$ \hat\mu_k^-\to\hat\mu^-,\,\,\hat\mu_k^+\to\hat\mu^+$$ where $\hat{\mu}_k^{-/+}, \hat{\mu}^{-/+}$ are the corresponding stable/unstable lifts (they are well defined, see Section \ref{tangent dynamics}). The key idea is primarily derived from the work of Burguet \cite{Bur24} (a strengthened extension of Buzzi-Crovisier-Sarig's  results \cite{BCS22}) on the continuity of Lyapunov exponents, although they do not explicitly frame it in this manner. We remark that the idea of Buzzi-Crovisier-Sarig also serves our purpose but may require a slightly stronger condition, namely, $$h_{\rm top}(X) > \frac{\lambda^{+}(X) + \lambda^{-}(X)}{r-1}.$$

\paragraph{\it{\bfseries Step 2. Uniform largeness of the Pesin sets}}
Building on step 1, we further deduce that nearby MMEs have uniformly positive measure on a common Pesin set and hence must be homoclinically related. This conclusion is derived by combining Theorem \ref{Pro:compare-two-sets} and Theorem \ref{Thm:limit measure carries large exponent everywhere}.

\paragraph{\it{\bfseries Step 3. Homoclinic class and finiteness of MMEs}}
A recent work of Buzzi-Crovisier-Lima (\cite[Corollary 1.2]{BCL23}) demonstrates that two homoclinically related MMEs must coincide , which in turn implies the finiteness of MMEs.

\section{Preliminaries}
Let $X$ be a non-singular vector field over a three-dimensional compact manifold $M$ and let $\varphi=\{\varphi^{t}\}$ be the flow generated by $X$. Let $\varphi^{1}$ be the time-one map of $\varphi$. For simplicity, we will often consider the time-one map instead of the flow, as entropies, Lyapunov exponents, and many other invariants can be defined for the time-one map and coincide with those of the flow.
\subsection{Entropy and Lyapunov exponents}
The topological entropy $X$ is denoted by $h_{\rm top}(X)$ or $h_{\rm top}(\varphi)$. Let $h(\varphi, \mu)$ or $h(X, \mu)$ denote the metric entropy of an invariant measure $\mu$. By Oseledets Multiplicative Ergodic Theorem, there is an invariant set $\mathcal{R}$ (called the \emph{Lyapunov regular set}) with total measure (i.e., $\mu(\mathcal{R})=1$ for any invariant measure $\mu$) such that for any $x\in\mathcal{R}$, there are a splitting (called the  \emph{Oseledets splitting}) $T_{x}M=E^{1}\oplus E^{2}\oplus\cdots\oplus E^{l}$ and finitely many numbers (called the \emph{Lyapunov exponents}) $\lambda_{1}(x)>\lambda_{2}(x)>\cdots>\lambda_{l}(x)$ such that for any nonzero vector $v\in E^{j}$, we have $$\lim_{t\to \pm\infty}\frac{1}{t}\log\|D\varphi^{t}_{x}(v)\|=\lambda_{j} $$ and $$\lim_{t\to \pm\infty}\frac{1}{t}\log\measuredangle\left( D\varphi^{t}_{x}(E^{i}_{x}), D\varphi^{t}_{x}(E^{j}_{x})\right)=0,\quad i\neq j.$$

In our special setting, there are at most three Lyapunov exponents, one of which is zero, corresponding to the flow direction. If the largest Lyapunov exponent $\lambda_{1}(x)$ is positive, we denote it by $\lambda^{+}(\varphi,x)$ and it can be also defined as $$\lambda^{+}(\varphi,x)=\lim_{t\to+\infty}\frac{1}{t}\log\|D\varphi^{t}_{x}\|.$$ Similarly if the smallest Lyapunov exponent is negative, we denote it by $\lambda^{-}(\varphi,x)$. In this case, we rewrite the Oseledets splitting at $x$ as $$T_{x}M=E^{+}_{x}\oplus E_{x}^{0}\oplus E_{x}^{-}$$ where $E_{x}^{0}={\rm Span}(X(x))$ is the flow direction. An invariant measure $\mu$ is called \emph{hyperbolic} if for $\mu$-a.e. $x$, the three Lyapunov exponents of $x$ are positive, zero (flow direction) and negative. For an ergodic measure $\mu$ with $h(\varphi,\mu)>0$, $\mu$ is hyperbolic by Ruelle's inequality.  For a hyperbolic measure $\mu$, we define $$\lambda^{+}(\varphi, \mu):=\int\lambda^{+}(\varphi,x) d\,\mu(x),\quad \lambda^{-}(\varphi, \mu):=\int\lambda^{-}(\varphi,x) d\,\mu(x).$$ If $\mu$ is ergodic, then $\lambda^{\pm}(\varphi, \mu)=\lambda^{\pm}(\varphi,x)$ for $\mu$-a.e. $x$. For a diffeomorphism $f$ on $M$, we can similarly define these objects.

\subsection{Tangent dynamics}\label{tangent dynamics}
With the Riemannian structure inherited from the tangent bundle $TM$, we denote the projective tangent bundle of $M$ by  $$\hat{M}:=\{(x, E):~x\in M, E \text{ is a one-dimensional linear subspace of $T_{x}M$}\}.$$ Let $\pi:\hat{M}\to M$ be the natural projection. Let $\hat{\varphi}$ be the induced flow of $\varphi$ (also called the \emph{canonical lift}) on $\hat{M}$ defined by $$\hat{\varphi}^{t}(x, E):=\left(\varphi^{t}(x), D\varphi^{t}_{x}(E)\right).$$ If $\varphi$ is of class $C^{r}$, then $\hat{\varphi}$ is of class $C^{r-1}$. 

Let $\rho:\hat{M}:\to\mathbb{R}$ be the continuous function $$\rho(x, E):=\log\|D\varphi^{1}_{x}|_{E}\|$$ where recall that $\varphi^{1}$ is the time-one map of $\varphi$. 

Given an $\hat{\varphi}$-invariant measure $\hat{\mu}$, the \emph{projection} of $\hat{\mu}$ is defined by $\mu:=\hat{\mu}\circ\pi^{-1}$ and $\hat{\mu}$ is called the \emph{lift} of $\mu$. We define $$\lambda(\hat{\varphi},\hat{\mu}):=\int_{\hat{M}}\rho d\,\hat{\mu}.$$ We list some basic properties. 

\begin{itemize}
	\item If $\hat{\mu}_{n}\xrightarrow{\text{weak $\ast$}}\hat{\mu}$, then $\lambda(\hat{\varphi},\hat{\mu}_{n})\to \lambda(\hat{\varphi},\hat{\mu})$. This is a consequence of the continuity of the function $\rho$.
	\item Let $\mu$ be a hyperbolic measure. We can define the two lifts: $$\hat{\mu}^{+}:=\int_{\hat{M}}\delta_{(x, E^{+}_{x})} d\,\mu(x),\quad\hat{\mu}^{-}:=\int_{\hat{M}}\delta_{(x, E^{-}_{x})} d\,\mu(x).$$ $\hat{\mu}^{+}, \hat{\mu}^{-}$ are called the \emph{unstable and stable lifts} of $\mu$. Moreover, we have $$\lambda^{\pm}(\varphi, \mu)=\lambda(\hat{\varphi},\hat{\mu}^{\pm}).$$  If $\mu$ is $\varphi$-ergodic, these two lifts are $\hat{\varphi}$-ergodic. Moreover (refer to \cite[Proposition 3.8]{BCS25}, $$\lambda^{\pm}(\varphi,\mu)=\lambda(\hat{\varphi},\hat{\mu})\quad\text{if and only if}\quad\hat{\mu}=\hat{\mu}^{\pm}.$$ 
	
	For detailed discussions, we refer to \cite[Lemma 3.3]{BCS22} and the references therein.
	\item Let $\mu$ be a hyperbolic measure. Then any lift $\hat{\mu}$ of $\mu$ is carried by $\{(x, E^{-}_{x})\}, \{(x, E^{0}_{x})\}$ and $\{(x, E^{+}_{x})\}$ (refer to \cite[Corollary 3.4]{BCS22} for detailed arguments).

	\item Let $\hat{\mu}$ be an invariant measure and let $\mu$ be its projection. We have $h(\hat{\varphi},\hat{\mu})=h(\varphi,\mu)$ as an application of Ledrappier-Walters variational principle \cite{LeW77}.
\end{itemize}
Similarly, for diffeomorphisms on $M$, these objects can be defined, and the corresponding properties remain valid.

\section{Convergence of stable/unstable lifts of MMEs}
In this section, the main result is the following theorem which shows that the convergence of MMEs implies the convergence of their unstable lifts. 

Recall that the asymptotic dilation of $X$ is defined by $$\lambda^{+}(X):=\lim_{n\to+\infty}\frac{1}{n}\log\|D\varphi^{n}\|, \quad \lambda^{-}(X):=\lim_{n\to+\infty}\frac{1}{n}\log\|D\varphi^{-n}\|$$ where $\|D\varphi^{n}\|:=\sup_{x}\|D\varphi^{n}_{x}\|.$
\begin{Theorem}\label{continuity of exponents for flow}
	Let $X$ be a $C^r(r>1)$ non-singular vector field over a three-dimensional compact manifold $M$ with $h_{\rm top}(X)>\frac{1}{r}\min\{\lambda^{+}(X), \lambda^{-}(X)\}$. If $\mu_k$ is a sequence of $\varphi^{1}$-ergodic measures with $\mu_k\to\mu$ (for some $\varphi^{1}$-invariant measure $\mu$) and $h(X, \mu_{k})\to h_{\rm top}(X)$, then $\mu$ is a MME and the corresponding unstable and stable lifts converge, i.e., $$\lim_{k\to+\infty}\hat\mu_k^+=\hat\mu^+, \quad \lim_{k\to+\infty}\hat\mu_k^-=\hat\mu^-.$$
\end{Theorem}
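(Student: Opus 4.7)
The plan is to extract weak-$\ast$ subsequential limits of the projective lifts, identify them using the flow version of Burguet's continuity of Lyapunov exponents, and then invoke the standard characterization of the unstable (resp.\ stable) lift by the functional $\hat\nu\mapsto\lambda(\hat\varphi,\hat\nu)$ recalled in Section~\ref{tangent dynamics}.

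First, by weak-$\ast$ compactness I would pass to a subsequence along which $\hat\mu_{k}^{+}\to\hat\nu^{+}$ and $\hat\mu_{k}^{-}\to\hat\nu^{-}$ for some $\hat\varphi$-invariant probability measures on $\hat M$, both projecting to $\mu$. Continuity of the cocycle potential $\rho$ on $\hat M$ gives
$$\lambda(\hat\varphi,\hat\nu^{\pm})=\lim_{k}\lambda(\hat\varphi,\hat\mu_{k}^{\pm})=\lim_{k}\lambda^{\pm}(\varphi,\mu_{k}),$$
while Ruelle's inequality applied to $\mu_{k}$ and to its time reverse yields $\lambda^{+}(\varphi,\mu_{k})\geq h(\varphi,\mu_{k})$ and $-\lambda^{-}(\varphi,\mu_{k})\geq h(\varphi,\mu_{k})$, so in particular $\lambda(\hat\varphi,\hat\nu^{+})\geq h_{\rm top}(X)>0$ and $\lambda(\hat\varphi,\hat\nu^{-})\leq -h_{\rm top}(X)<0$.

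Next, I would apply the flow version of Burguet's theorem on continuity of Lyapunov exponents (Theorem~\ref{invariant Cr case}), whose threshold is precisely $\tfrac{1}{r}\min\{\lambda^{+}(X),\lambda^{-}(X)\}$. Because $h(\varphi,\mu_{k})\to h_{\rm top}(X)$ lies strictly above this threshold, the theorem is expected to deliver both the upper semicontinuity of entropy (so that $h(\varphi,\mu)=h_{\rm top}(X)$, i.e.\ $\mu$ is a MME) and the continuity of the extreme Lyapunov exponents, $\lambda^{\pm}(\varphi,\mu_{k})\to\lambda^{\pm}(\varphi,\mu)$. Since $h(\varphi,\mu)=h_{\rm top}(X)>0$, Ruelle's inequality makes $\mu$ hyperbolic, so $\hat\mu^{\pm}$ are well defined. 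Combining with the previous paragraph,
$$\lambda(\hat\varphi,\hat\nu^{+})=\lim_{k}\lambda^{+}(\varphi,\mu_{k})=\lambda^{+}(\varphi,\mu),$$
and the characterization recalled in Section~\ref{tangent dynamics} -- that this equality uniquely picks out $\hat\mu^{+}$ among all lifts of a hyperbolic $\mu$ -- then forces $\hat\nu^{+}=\hat\mu^{+}$. Since every weak-$\ast$ convergent subsequence of $\hat\mu_{k}^{+}$ yields the same limit $\hat\mu^{+}$, the whole sequence converges: $\hat\mu_{k}^{+}\to\hat\mu^{+}$. The argument for $\hat\mu_{k}^{-}\to\hat\mu^{-}$ is entirely symmetric, using the reverse-time flow and the other half of the asymmetric threshold.

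The hard part will be the middle step, namely importing Burguet's continuity of Lyapunov exponents from its original setting of $C^{r}$ surface diffeomorphisms into that of $C^{r}$ three-dimensional non-singular flows. The projective bundle $\hat M$ is four-dimensional, the lifted dynamics $\hat\varphi$ is only $C^{r-1}$, and the flow direction contributes a persistent zero Lyapunov exponent that has to be isolated when running the underlying Yomdin-type entropy estimates. The asymmetric threshold $\tfrac{1}{r}\min\{\lambda^{+}(X),\lambda^{-}(X)\}$ is calibrated precisely so that the Yomdin defect is absorbed by the entropy surplus of the MMEs in both time directions. A secondary, more routine check is that the characterization ``$\lambda(\hat\varphi,\hat\nu)=\lambda^{+}(\varphi,\mu)\Leftrightarrow\hat\nu=\hat\mu^{+}$'' remains applicable when $\mu$ is only a priori invariant; this follows by decomposing $\mu$ ergodically and invoking \cite[Corollary 3.4]{BCS22}, which asserts that any lift is supported on the three Oseledets sub-bundles $E^{+}$, $E^{0}$, $E^{-}$.
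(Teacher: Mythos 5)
Your proposal correctly identifies the key ingredients — pass to subsequential limits of the lifts, use the flow version of Burguet's result (Theorem~\ref{invariant Cr case}) on the time-one map, then invoke the characterization $\lambda(\hat\varphi,\hat\nu)=\lambda^{\pm}(\varphi,\mu)\Leftrightarrow\hat\nu=\hat\mu^{\pm}$. But there is a genuine gap in the last step, where you claim the argument for $\hat\mu_k^-\to\hat\mu^-$ is ``entirely symmetric, using the reverse-time flow and the other half of the asymmetric threshold.'' The hypothesis is $h_{\rm top}(X)>\tfrac{1}{r}\min\{\lambda^{+}(X),\lambda^{-}(X)\}$, which guarantees that the effective threshold of Theorem~\ref{invariant Cr case} is cleared in \emph{one} time direction, but not necessarily in both. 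If $\lambda^{+}(X)\le\lambda^{-}(X)$, you can apply Theorem~\ref{invariant Cr case} to $f=\varphi^1$ (since $h_{\rm top}(f)>\tfrac{\lambda^{+}(f)}{r}$), but you may well have $h_{\rm top}(f^{-1})\le\tfrac{\lambda^{+}(f^{-1})}{r}=\tfrac{\lambda^{-}(f)}{r}$, so the reverse-time application is unavailable. Reversing time also does not change the value of $\min\{\lambda^{+},\lambda^{-}\}$, so there is no ``other half'' to fall back on.

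The paper resolves this with a non-symmetric trick (borrowed from \cite[Lemma~3.10]{BCS25}): once $\hat\mu_k^+\to\hat\mu^+$ is established, the convergence $\lambda^{+}(f,\mu_k)\to\lambda^{+}(f,\mu)$ follows by continuity of $\rho$; and since in this three-dimensional non-singular setting $\lambda^{-}(f,\mu_k)=\int\log\det(Df_x)\,d\mu_k(x)-\lambda^{+}(f,\mu_k)$ with $\log\det(Df_x)$ continuous, one gets $\lambda^{-}(f,\mu_k)\to\lambda^{-}(f,\mu)$ for free, and the lift characterization then pins down the stable limit. You need an argument of this type — deducing one side from the other — rather than a second, independent application of Theorem~\ref{invariant Cr case}. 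A secondary, more minor point: you assert that Theorem~\ref{invariant Cr case} ``delivers'' upper semicontinuity of entropy and continuity of the exponents, but its actual conclusion is a decomposition $\hat\mu=(1-\beta)\hat\mu_0+\beta\hat\mu_1^{+}$ together with the bound $\limsup_k h(f,\mu_k)\le\beta h(f,\mu_1)+(1-\beta)\alpha$; you still need to choose $\alpha\in\bigl(\tfrac{\lambda^{+}(f)}{r},h_{\rm top}(f)\bigr)$ and run the inequality $h_{\rm top}(f)\le\beta h_{\rm top}(f)+(1-\beta)\alpha$ to force $\beta=1$, $\mu=\mu_1$, and $\hat\mu=\hat\mu^{+}$. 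That step should be written out, since it is where the entropy surplus is actually spent.
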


\begin{Remark}We make several comments.
	\begin{itemize}
		\item Similar results were established for surface diffeomorphisms by Buzzi-Crovisier-Sarig \cite{BCS22} and Burguet \cite{Bur24},  although they do not explicitly frame it in this manner. Indeed, their focus is on establishing the continuity of the Lyapunov exponents: $$\lambda^{+}(f, \mu_{k})\to\lambda^{+}(f, \mu),\quad \lambda^{-}(f, \mu_{k})\to\lambda^{-}(f, \mu)$$ which follows as a consequence of the convergence of the unstable lifts in Theorem \ref{continuity of exponents for flow}. We note that, for $C^{\infty}$ surface diffeomorphisms, establishing the continuity of the Lyapunov exponents does not necessarily require assuming that $h(f, \mu_k) \to h_{\rm top}(f)$ in some case. For instance, one can instead assume that $h(f, \mu_k) \to h(f, \mu) > 0$ and $\mu$ is ergodic (see the proof of Theorem \ref{continuity of exponents for flow} below and the subsequent remarks). Refer to \cite[Theorem A]{BCS22} for precise statements.
		
		The fact that the limit measure $\mu$ is also an MME is known for $C^{r}$ surface diffeomorphisms(see Burguet \cite[Corollary 1]{Bur24}). For $C^{\infty}$ systems in any dimension, refer to Newhouse \cite{New89} and Yomdin \cite{Yom87}).
		\item The threshold $\frac{1}{r}\min\{\lambda^{+}(X), \lambda^{-}(X)\}$ comes from Burguet's analysis of the continuity of Lyapunov exponents \cite{Bur24} (see Theorem \ref{invariant Cr case} below). The original idea comes from Buzzi-Crovisier-Sarig \cite{BCS22} for a surface diffeomorphism $f$ where the threshold is $\frac{\lambda^{+}(f) + \lambda^{-}(f)}{r-1}$ which is slightly larger than  $\frac{1}{r}\min\{\lambda^{+}(f), \lambda^{-}(f)\}$.
		\item A parallel result can be formulated for diffeomorphisms on three-dimensional manifolds, provided that every ergodic measure with large entropy has exactly one zero Lyapunov exponent (which implies they have exactly one positive and one negative exponent). See Theorem \ref{invariant Cr case} below for the key role of this condition.
	\end{itemize}
\end{Remark}

\subsection{Continuity of the Lyapunov exponents: Proof of Theorem \ref{continuity of exponents for flow}}
To prove Theorem \ref{continuity of exponents for flow}, for simplicity, we primarily focus on the time-one map of the flow, which is a diffeomorphism. For a diffeomorphism $f$, we say an invariant measure $\mu$ has exactly one positive Lyapunov exponents if for $\mu$-a.e. $x$, we have $\lambda_{1}(x)=\lambda^{+}(f,x)>0$ and all other exponents are non-positive. Recall the asymptotic dilation $$\lambda^{+}(f):=\lim_{n\to+\infty}\frac{1}{n}\log\|Df^{n}\|, \quad \lambda^{-}(f):=\lim_{n\to+\infty}\frac{1}{n}\log\|Df^{-n}\|$$ where $\|Df^{n}\|:=\sup_{x}\|Df^{n}_{x}\|, n\in\mathbb{N}.$
\begin{Theorem}\label{invariant Cr case}
	Let $f$ be a $C^{r}(r>1)$ diffeomorphism on a compact manifold $M$. Let $\{\nu_{k}\}_{k\geq 1}$ be a sequence of ergodic measures such that each $\nu_{k}$ has exactly one positive Lyapunov exponent and the unstable lifts $\hat{\nu}_{k}^{+}$ converge to some invariant measure $\hat{\mu}$ whose projection $\mu$ also has exactly one positive Lyapunov exponent.

	Then for any $\alpha>\frac{\lambda^{+}(f)}{r}$, there is a decomposition of $\hat{\mu}=(1-\beta)\hat{\mu}_{0}+\beta \hat{\mu}^{+}_{1}$ for some $\beta\in [0,1]$ and some invariant measures $\hat{\mu}_{0},\hat{\mu}_{1}^{+}$ ($\hat{\mu}_{1}^{+}$ is the unstable lift of some $f$-invariant measure $\mu_{1}$) such that $$\limsup_{k\to+\infty}h(f,\nu_{k})\leq\beta h(f,\mu_{1})+(1-\beta)\alpha.$$
\end{Theorem}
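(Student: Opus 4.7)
The plan is to adapt Burguet's Yomdin-theoretic argument from \cite{Bur24} to our setting, exploiting the hypothesis that each $\nu_k$ and the projection $\mu$ have exactly one positive Lyapunov exponent to reduce the geometry effectively to the surface case along the two-dimensional unstable-plus-center subbundle. First I reduce the problem from $f$-entropy to $\hat{f}$-entropy: by the Ledrappier--Walters variational principle, $h(f,\nu_k)=h(\hat{f},\hat{\nu}_k^+)$, so it suffices to bound $\limsup_k h(\hat{f},\hat{\nu}_k^+)$ in terms of the limit $\hat{\mu}$. Since $\mu$ has exactly one positive Lyapunov exponent, the top Oseledets direction $E_x^+$ is well defined $\mu$-a.e., and the coherent set
$$C:=\{(x,E)\in\hat{M}:x\text{ is Lyapunov regular for }\mu,\,E=E_x^+\}$$
is $\hat{f}$-invariant. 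Setting $\beta:=\hat{\mu}(C)$ and defining
$$\hat{\mu}_1^+:=\beta^{-1}\hat{\mu}|_C,\qquad \hat{\mu}_0:=(1-\beta)^{-1}\hat{\mu}|_{C^c}$$
(with the obvious conventions when $\beta\in\{0,1\}$) yields the decomposition $\hat{\mu}=(1-\beta)\hat{\mu}_0+\beta\hat{\mu}_1^+$; by construction $\hat{\mu}_1^+$ is the unstable lift of $\mu_1:=\pi_*\hat{\mu}_1^+$, and Ledrappier--Walters applied again gives $h(\hat{f},\hat{\mu}_1^+)=h(f,\mu_1)$.

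The heart of the argument is to establish the asymptotic upper bound
$$\limsup_{k\to+\infty}h(\hat{f},\hat{\nu}_k^+)\leq\beta\,h(\hat{f},\hat{\mu}_1^+)+(1-\beta)\,\alpha$$
for every $\alpha>\lambda^+(f)/r$. The non-coherent part $\hat{\mu}_0$ is concentrated on pairs $(x,E)$ with $E\neq E_x^+$, so the line $E$ is transverse to the top Oseledets direction and is contracted by $Df$ relative to $E_x^+$; equivalently, such $(x,E)$ sit in the stable direction of the vertical part of $\hat{f}$. Following Burguet, I would fix a small $\delta>0$ and separate $\hat{M}$ into a neighborhood $U_\delta$ of $C$ and its complement, whose $\hat{\mu}$-measures approximate $\beta$ and $1-\beta$ respectively. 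On $U_\delta^c$ the uniform vertical contraction allows a Yomdin-style reparameterization of Bowen balls of $\hat{f}$ by $C^{r-1}$ curves of controlled degree, producing a local entropy bound by $\alpha$ there. On $U_\delta$ the second coordinate is essentially tangent to the unstable foliation of $\mu$, so a Newhouse-type upper semicontinuity applied to $\hat{f}$ restricted to $U_\delta$ transfers the limit contribution to $h(\hat{f},\hat{\mu}_1^+)$. Splitting $\hat{\nu}_k^+$ along $U_\delta$ and its complement, combining the two bounds via affinity of entropy under the ergodic decomposition, and then letting $\delta\to 0$ yields the desired inequality; the flow direction carries no entropy and requires only a minor bookkeeping modification.

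The main obstacle is obtaining the sharp constant $\lambda^+(f)/r$ in the Yomdin-type bound on $U_\delta^c$. A naive application of Newhouse--Yomdin to the $C^{r-1}$ map $\hat{f}$ yields only the weaker threshold $\lambda^+(\hat{f})/(r-1)$, and Burguet's key technical insight is that the skew-product structure of $\hat{f}$ over $f$ together with the fact that the vertical Lyapunov exponent of $\hat{f}$ vanishes at coherent points allows one to recover the improved constant $\lambda^+(f)/r$. Implementing this in our three-dimensional flow setting requires verifying that the center (flow) direction is neutral for the reparameterization and does not corrupt the vertical contraction estimates; this is precisely where the hypothesis that $\mu$ has a single positive Lyapunov exponent, with the zero exponent confined to the flow direction, is essential. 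The remaining arguments, in particular the passage from $\hat{\mu}_0$ to the approximating lifts $\hat{\nu}_k^+$, are a routine application of weak-$*$ compactness and the continuity of the function $\rho$.
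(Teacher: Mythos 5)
Your high-level idea — adapt Burguet's Yomdin-theoretic argument, use the lifted dynamics, aim for the sharp constant $\lambda^+(f)/r$ — is the right ballpark, but the specific construction of the decomposition is not the one the paper uses, and it does not work.

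\textbf{The decomposition is wrong.} You define the coherent set $C=\{(x,E): E=E_x^+\}$ and set $\beta:=\hat{\mu}(C)$, $\hat{\mu}_1^+:=\beta^{-1}\hat{\mu}|_C$. This quantity is determined by the limit $\hat{\mu}$ alone and ignores the approximating sequence $\nu_k$. In particular, $\hat{\mu}(C)=1$ whenever $\hat{\mu}=\hat{\mu}^+$ is the unstable lift, and your decomposition then asserts $\limsup_k h(f,\nu_k)\le h(f,\mu)$ with no error term. That is upper semicontinuity of entropy, which fails for finite smoothness: one can have $\hat{\nu}_k^+\to\hat{\mu}^+$ while the density of hyperbolic times of $\nu_k$ (in Burguet's sense, where $\nu_k$-typical orbits spend a definite fraction of time with uniformly good backward contraction) drops in the limit, producing entropy loss of order $\lambda^+(f)/r$ times that density defect. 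The paper's $\beta$ is precisely that limiting hyperbolic-time density: one takes the set $\hat{H}=\pi^{-1}G_\kappa^1\cap H_\delta$, defines $\beta^L_k:=\hat{\nu}_k^+(\{0\in E^L(\cdot)\})$, passes $\chi^L\cdot\hat{\nu}_k^+\to\beta^L\hat{\nu}^L$ as $k\to\infty$ and then $\beta^L\hat{\nu}^L\to\beta\hat{\mu}_1$ as $L\to\infty$. This $\beta$ depends on the sequence and on the chosen subsequence, is in general strictly smaller than $\hat{\mu}(C)$, and the remaining measure $\hat{\mu}_0$ can still be entirely supported on $C$ (it need not be ``non-coherent'' at all). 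Your restriction-to-$C$ decomposition is therefore too optimistic and the claimed inequality would be false with it.

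\textbf{The local-entropy argument is not sound.} Splitting $\hat{M}$ into a neighborhood $U_\delta$ of $C$ and its complement, and then ``applying a Yomdin estimate on $U_\delta^c$'' and ``Newhouse-type upper semicontinuity on $U_\delta$'', does not type-check: $\hat{f}$ does not preserve either set, entropy is not a local quantity, and Newhouse's theorem is a $C^\infty$ statement. What replaces this in the paper is a time-domain split — for each orbit, partition $[0,n)$ into hyperbolic times $E^L_n(\hat{x})$ and their complement, then split the conditional entropy accordingly (Proposition~\ref{entropy splitting}). You also flag but do not resolve the sharp constant: a naive Yomdin bound for the $C^{r-1}$ map $\hat{f}$ gives $\lambda^+(\hat{f})/(r-1)$, not $\lambda^+(f)/r$; recovering $\lambda^+(f)/r$ is precisely Burguet's Proposition~1, which exploits the skew-product structure and the modified cocycle $\rho'=\rho-\tfrac1r\log^+\|Df\|$ entering the definition of $H_\delta$. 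This is the technical heart of the proof, and your proposal treats it as a black box while simultaneously departing from the framework (hyperbolic times, the function $\rho'$) in which that black box is formulated. Finally, the paper obtains $\alpha>\lambda^+(f)/r$ (rather than the cruder $C(f)$) by applying the whole argument to $f^p$ for large $p$, another step missing from your sketch.
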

The proof of Theorem \ref{invariant Cr case} closely follows the arguments presented by Burguet \cite{Bur24} for surface diffeomorphisms, which itself generalizes the work of Buzzi-Crovisier-Sarig \cite{BCS22}. While Burguet's methods are developed in the context of surface dynamics, in general, his conclusions can be naturally extended to three-dimensional flows. This extension involves certain adjustments to account for the differences between surface diffeomorphisms and three-dimensional flows. To maintain both clarity and rigor, and to ensure the results are presented in a self-contained manner, we include a summary of the proof of Theorem \ref{invariant Cr case} in Section \ref{outline of the proof of Theorem ref invariant Cr case }, and a detailed adaptation of Burguet's methods provided in Section \ref{Burguet's work}.
\begin{Remark}We make several comments.
	\begin{itemize}
		\item The setting of this theorem is modeled on the three-dimensional non-singular flows, which are our primary focus. Although we do not assume that $h_{\rm top}(f) > \frac{\lambda^{+}(f)}{r}$, this is the only case of interest. If $h_{\rm top}(f) \leq \frac{\lambda^{+}(f)}{r}$, the theorem holds trivially by setting $ \hat{\mu}_{0} = \hat{\mu}$ and $\beta = 0$, yielding no meaningful conclusion.
		\item The theorem is essentially parallel with the work of Burguet \cite{Bur24} (which is a generalization of the work of Buzzi-Crovisier-Sarig \cite{BCS22}) for surface diffeomorphisms. The essential idea is that the entropy is concentrated on certain one-dimensional submanifolds (the unstable manifolds), a property that naturally arises with hyperbolic measures for surface diffeomorphisms. Given that we are working with three-dimensional non-singular flows, this approach is broadly applicable.
	\end{itemize}
	
\end{Remark}

Theorem \ref{continuity of exponents for flow} is a consequence of Theorem \ref{invariant Cr case}.

\begin{proof}[Proof of Theorem \ref{continuity of exponents for flow} assuming Theorem \ref{invariant Cr case}]
	
	The idea is to applying Theorem \ref{invariant Cr case} to the time-one map (denoted by $f$ for convenience). Without loss of generality, we may assume $h_{\rm top}(f)>\frac{\lambda^{+}(f)}{r}$ (otherwise consider $f^{-1}$ instead).
	
	Assume the unstable lifts $\hat{\mu}_{k}^{+}\to \hat{\mu}$ for some invariant measure $\hat{\mu}$ on $\hat{M}$ whose projection is $\mu$. Let $\alpha$ be such that $h_{\rm top}(f)>\alpha>\frac{\lambda^{+}(f)}{r}$. Applying Theorem \ref{invariant Cr case} to the time-one map $f$, we have a decomposition of $\hat{\mu}=(1-\beta)\hat{\mu}_{0}+\beta \hat{\mu}_{1}^{+}$ for some $\beta\in [0,1]$ and invariant measures  $\hat{\mu}_{0}, \hat{\mu}_{1}^{+}$ (which is the unstable lift of some hyperbolic measure $\mu_{1}$) such that $$h_{\rm top}(f)=\lim_{k\to+\infty}h(f,\mu_{k})\leq\beta h(f,\mu_{1})+(1-\beta)\alpha\leq \beta h_{\rm top}(f)+(1-\beta)\alpha.$$ Here, to save notations, we readopt the notation $\mu_{1}$ which should not be confused with the first element in the sequence $\{\mu_{k}\}$. The above inequality implies $\beta=1, \mu=\mu_{1}$ and consequently $\mu$ is a MME. Moreover, $\hat{\mu}=\hat{\mu}^{+}$. Note that $\hat{\mu}^{+}$ is the unique unstable lift of $\mu$ (since $\mu$ is assumed to have exactly one positive exponent). We then conclude that $\hat{\mu}_{k}^{+}\to \hat{\mu}=\hat{\mu}^{+}$.
	
	Next we use a trick borrowed from Buzzi-Crovisier-Sarig \cite[Lemma 3.10]{BCS25} to show that $\hat{\mu}_{k}^{+}\to \hat{\mu}^{+}$ actually implies $\hat{\mu}_{k}^{-}\to\hat{\mu}^{-}$ in our very special setting (i.e., the dynamics is the time-one map of a three-dimensional  non-singular vector field). By abusing the notations, let us again assume the stable lifts $\hat{\mu}_{k}^{-}\to \hat{\mu}$ for some invariant measure $\hat{\mu}$ on $\hat{M}$. Our goal is to show $\hat{\mu}=\hat{\mu}^{-}$. By $\hat{\mu}_{k}^{+}\to \hat{\mu}^{+}$, we have the convergence of the positive Lyapunov exponents, i.e., 
	\begin{equation}\label{formula1 in Theorem 3.1}
		\lambda^{+}(f,\mu_{k})=\int\rho d\,\hat{\mu}_{k}^{+}\to \int\rho d\,\hat{\mu}^{+}=\lambda^{+}(f,\mu)
	\end{equation} where we recall that $\rho:\hat{M}\to\mathbb{R}$ is the continuous function $$\rho(x, E):=\log\|Df_{x}|_{E}\|.$$ In our special setting, the measures $\mu_{k}$  and $\mu$ have exactly three Lyapunov exponents: one positive, one zero (flow direction) and one negative. Note $$\lambda^{-}(f,\mu_{k})=\int\log \det \left(Df_{x}\right)d\mu_{k}(x)-\lambda^{+}(f,\mu_{k}).$$ Since $\log \det \left(Df_{x}\right)$ is a continuous function, we have $$\lambda^{-}(f,\mu_{k})\to \lambda^{-}(f,\mu).$$ Since $\rho$ is a continuous function, by Formula (\ref{formula1 in Theorem 3.1}), we have $$\lambda^{-}(f,\mu_{k})=\int\rho d\,\hat{\mu}_{k}^{-}\to \int\rho d\,\hat{\mu}.$$  Hence $$\lambda(\hat{f},\hat{\mu}):=\int\rho d\,\hat{\mu}=\lambda^{-}(f,\mu).$$ Therefore, we have (see the basic properties in Section \ref{tangent dynamics})  $$\hat{\mu}=\hat{\mu}^{-}.$$


\end{proof}
One can observe that the proof above can be adapted to some other setting: if $\mu_k$ is a sequence of ergodic hyperbolic measures (not necessarily MMEs) converging to an ergodic measure  $\mu$ with $\lim_{k\to+\infty}h(X,\mu_{k})= h(X,\mu)>\frac{1}{r}\min\{\lambda^{+}(X), \lambda^{-}(X)\}>0$ (ergodicity implies $\mu_{1}=\mu$ and $\beta=1$), then the corresponding unstable lifts also converge, i.e., $$\lim_{k\to+\infty}\hat\mu_k^+=\hat\mu^+, \quad \lim_{k\to+\infty}\hat\mu_k^-=\hat\mu^-.$$ 

%

\subsection{Heuristic overview of Theorem \ref{invariant Cr case}: summary of Burguet's work}\label{outline of the proof of Theorem ref invariant Cr case }
To present a heuristic outline of the proof of Theorem \ref{invariant Cr case}, we omit certain non-essential details and rely on approximate properties that may not hold in full generality.

Let us disregard the role of the lifted dynamics and focus solely on proving the final conclusion:

 $$\limsup_{k\to+\infty}h(f,\nu_{k})\lesssim\beta h(f,\mu_{1})+(1-\beta)\cdot\frac{\lambda^{+}(f)}{r}.$$

\paragraph{\it{\bfseries Entropy splitting:}\\}
Given an ergodic measure $\nu$ with exactly one positive Lyapunov exponent, by Ledrappier-Young entropy theory, the entropy of $\nu$ coincides with the entropy of the conditional measure $\nu_{x}$ on the local unstable manifold $W^{u}_{\rm loc}(x)$ for a $\nu$-typical point $x$. To be precise, for a finite partition $\mathcal{P}$ with sufficiently small diameter and for sufficiently large $n$, $$h(f,\nu)\approx \frac{1}{n}H_{\nu_{x}}(\mathcal{P}^{[0,n)})$$ where, given a subset $E\subset\mathbb{N}$, we define the partition $$\mathcal{P}^{E}:=\bigvee_{i\in E}f^{-i}\mathcal{P}.$$ 
We divide the interval $[0,n)$ into two parts $E$ and $[0,n)\setminus E$. The part $E$ can be viewed as the collection of "hyperbolic times" where uniform hyperbolicity is recovered. The entropy contribution is then separated into two components, referred to as the geometric and neutral components, $$H_{\nu_{x}}(\mathcal{P}^{[0,n)})\lesssim H_{\nu_{x}}\left(\mathcal{P}^{E}\right)+ H_{\nu_{x}}\left(\mathcal{P}^{[0,n)\setminus E}\bigg|\mathcal{P}^{E}\right)=H_{\nu_{x}}\left(\mathcal{P}^{E}\right)+ H_{\nu_{x}}\left(\mathcal{P}^{[0,n)}\bigg|\mathcal{P}^{E}\right).$$

\paragraph{\it{\bfseries Geometric component:}\\}

Write $$\nu_{x}^{E}:=\frac{1}{\# E}\sum_{i\in E}f_{*}^{i}\nu_{x}.$$

A general result (see Lemma \ref{technical lemma}) provides a bound on the entropy of a partition over the set $E$ (i.e., $H_{\nu_{x}}(\mathcal{P}^{E})$) by relating it to the entropy of a measure averaged over the $E$-iterates of $f$ (i.e., $\nu^{E}_{x}$ defined above). Specifically, for any $m\in\mathbb{N}$, we have  $$\frac{1}{\# E}H_{\nu_{x}}\left(\mathcal{P}^{E}\right)\lesssim  \frac{1}{m}H_{\nu_{x}^{E}}\left(\mathcal{P}^{[0,m)}\right).$$ We write (assuming the limits exist): $$\beta_{\nu}:=\lim_{n\to+\infty}\frac{\# E}{n},\quad \nu^{*}:=\lim_{n\to+\infty} \nu_{x}^{E}.$$ Passing to limit in $n$, we then have(assuming $\nu^{*}(\partial\mathcal{P}^{m})=0$) $$\lim_{n\to+\infty}\frac{1}{n}H_{\nu_{x}}\left(\mathcal{P}^{E}\right)\lesssim \beta_{\nu} \cdot \frac{1}{m}H_{\nu^{*}}\left(\mathcal{P}^{[0,m)}\right).$$

\paragraph{\it{\bfseries Neutral component:}\\}

We note
$$H_{\nu_{x}}\left(\mathcal{P}^{[0,n)}\bigg|\mathcal{P}^{E}\right)\lesssim \log\sup_{A\in \mathcal{P}^{E}}\#\{B\in\mathcal{P}^{[0,n)}:~ B\cap A\cap W^{u}_{\rm loc}(x)\neq \emptyset\}.$$

This inequality roughly quantifies how many segments $A \cap W^{u}_{\rm loc}(x)$ must be subdivided into so that for each resulting segment $\theta$ (resembling an element $B \in \mathcal{P}^{[0,n)}$), the length of $f^{i}(\theta)$ remains less than 1 for all $i \in [0, n)$. This question aligns with a typical problem in Yomdin theory, often addressed through an inductive process.

At a "hyperbolic time" $k$ (i.e., $k \in E$), the uniform hyperbolicity can be approximately recovered. Consequently, for segments $\theta$ constructed during the preceding inductive step, the condition that the length of $f^{i}(\theta)$ is less than 1 is automatically satisfied, up to some negligible error.

However, at a non-hyperbolic time $k$ (i.e., $k \in [0, n) \setminus E$), the absence of hyperbolicity necessitates subdividing $\theta$ into smaller segments. The number of these subdivisions is bounded above by $e^{\frac{\lambda^{+}(f)}{r}}$, according to Yomdin theory. Since the number of non-hyperbolic times is approximately $n - \#E$, we obtain:  $$H_{\nu_{x}}\left(\mathcal{P}^{[0,n)}\bigg|\mathcal{P}^{E}\right)\lesssim \left(n-\# E\right)\cdot \frac{\lambda^{+}(f)}{r}.$$
Passing to limit in $n$, we have $$\lim_{n\to+\infty}\frac{1}{n}H_{\nu_{x}}\left(\mathcal{P}^{[0,n)}\bigg|\mathcal{P}^{E}\right)\lesssim \left(1-\beta_{\nu}\right)\cdot \frac{\lambda^{+}(f)}{r}.$$
\paragraph{\it{\bfseries Applying to a sequence of ergodic measures $\nu_{k}$:}\\}
Above we have proved that for an ergodic measure $\nu$ with exactly one positive Lyapunov exponent, $$h(f,\nu)\lesssim \beta_{\nu} \cdot \frac{1}{m}H_{\nu^{*}}\left(\mathcal{P}^{[0,m)}\right)+\left(1-\beta_{\nu}\right)\cdot \frac{\lambda^{+}(f)}{r}.$$

We apply the estimations above to a sequence of ergodic measures $\nu_{k}$ assuming the corresponding $\nu_{k}^{*}\to \mu_{1}$ for some invariant probability measure $\mu_{1}$ which is the component of the limit measure $\mu$ (i.e., $\nu_{k}\to\mu$) and the corresponding $\beta_{\nu_{k}}\to\beta$. By the arbitrariness of $m$ (assuming $\mu_{1}(\partial\mathcal{P}^{m})=0$), $$\limsup_{k\to+\infty}h(f,\nu_{k})\lesssim\beta \cdot \frac{1}{m}H_{\mu_{1}}\left(\mathcal{P}^{[0,m)}\right)+\left(1-\beta\right)\cdot \frac{\lambda^{+}(f)}{r}\lesssim\beta h(f,\mu_{1})+\left(1-\beta\right)\cdot \frac{\lambda^{+}(f)}{r}.$$

We remark that these measures $\nu_{k}^{*}$ and $\mu_{1}$ are actually not defined directly on the manifold $M$.  Instead, they are obtained as projections of the corresponding lifted measures under the tangent dynamics. For further details, see Section \ref{Burguet's work}.
\paragraph{\it{\bfseries Remarks on critical flaws:}\\}
We point out several problematic aspects of the above arguments and offer their corrections for careful readers.
\begin{itemize}
	\item Why is $\mu_1$ invariant under $f$, and why is its maximal Lyapunov exponent positive ?
	
	Indeed, it is usually \emph{NOT} $f$-invariant by our "naive" construction above. To address this, we can replace the set $E$ of "hyperbolic times" with the "$L$-expanded" set 
	$$
	E^{L}:=\bigcup [a,b),
	$$ 
	where the union runs over all $a,b\in E$ with $0<b-a\leq L$. In this new setup, the measure 
	$$
	\nu_{x}^{E^{L}} := \frac{1}{\# E^{L}} \sum_{i\in E^{L}} f_{*}^{i} \nu_{x}
	$$ 
	becomes "almost" $f$-invariant for sufficiently large $L$, since the boundary of the intervals in $E^{L}$ only occupies a small proportion of $E^{L}$. Taking the limit as $L \to +\infty$, the limit measure $\mu_{1}$ becomes $f$-invariant. The maximal Lyapunov exponent of $\mu_{1}$ is positive because it is mainly constructed from "hyperbolic times" and the hyperbolicity is inherited in some sense. The parameter $\alpha$ in Theorem \ref{invariant Cr case} serves as a technical adjustment that facilitates this result.
	
	\item Why do all the points in $W^{u}_{\rm loc}(x)$ share the same set $E$ of hyperbolic times ?
	
	Different points might have different hyperbolic times, but the total number of types of hyperbolic times grows only sub-exponentially in $L$, assuming we replace $E$ with $E^{L}$ as described above.  We can further divide $W^{u}_{\rm loc}(x)$ into sub-exponentially many segments, with all points in each segment sharing the same set of hyperbolic times. This sub-exponential division does not affect the entropy estimation. See the role of $\mathcal{E}_{n}^{L}$ in Proposition \ref{entropy splitting}.

	\item Why is it valid to use the same partition $\mathcal{P}$ to estimate the entropy, given that $\mathcal{P}$ fundamentally depends on the reference measures ?
	
	This issue is addressed by a technical adjustment. Specifically, two partitions are considered: one is dependent on the reference measure, while the other is independent. For further clarification, refer to Proposition \ref{entropy splitting} and the subsequent remark.
\end{itemize}

\section{Pesin theory}\label{Pesin theory}
\subsection{Size of the local stable and unstable manifolds}\label{size of local manifolds}
Recall that $\cR$ denotes the Lyapunov regular set. Given $\chi>0$, let ${\rm NUH}_{\chi}\subset\cR$ be the set of $x$ such that $x$ has one positive exponent which is strictly larger than $\chi$, one zero exponent which is on the flow direction and one negative exponent which is strictly less than $-\chi$. For $x\in {\rm NUH}_{\chi}$, recall that we denote the corresponding Oseledets splitting by $$T_{x}M=E^{+}_{x}\oplus E_{x}^{0}\oplus E_{x}^{-}$$ where $E_{x}^{0}={\rm Span}(X(x))$ is the flow direction. For these bundles, we drop the index $x$ if there is no confusion. 

Given $C>1$, on $M$, we define $\cR_{C,\chi}\subset {\rm NUH}_{\chi}$ to be the set of $x$ such that for any $ n\in\NN$, 

\begin{itemize}
	\item  $\|D\varphi^{-n}_{x}|_{E^{+}}\| \leq C{\rm e}^{-\chi n},\quad \|D\varphi^{n}_{x}|_{E^{+}}\| \geq C^{-1}{\rm e}^{\chi n}$,
	\item  $\|D\varphi^{n}_{x}|_{E^{-}}\| \leq C{\rm e}^{-\chi n},\quad \|D\varphi^{-n}_{x}|_{E^{-}}\| \geq C^{-1}{\rm e}^{\chi n}$,
	\item $\measuredangle(E^{+}_{x}, E^{-}_{x})\geq C^{-1}$.
\end{itemize}
We call the set $\cR_{C,\chi}$ above a \emph{weak Pesin set}.

Given $\varepsilon>0$, we define $\cR_{C,\chi,\varepsilon}\subset {\rm NUH}_{\chi}$ to be the set of $x$ such that for any $ n\in\NN, k\in\ZZ$,
\begin{itemize}
	\item $\|D\varphi^{-n}_{\varphi^{k}(x)}|_{E^{+}}\| \leq C{\rm e}^{|k|\varepsilon}{\rm e}^{-\chi n},\quad\|D\varphi^{n}_{\varphi^{k}(x)}|_{E^{+}}\| \geq C^{-1}{\rm e}^{-|k|\varepsilon}{\rm e}^{\chi n}$,
	\item $\|D\varphi^{n}_{\varphi^{k}(x)}|_{E^{-}}\| \leq C{\rm e}^{|k|\varepsilon}{\rm e}^{-\chi n},\,\|D\varphi^{-n}_{\varphi^{k}(x)}|_{E^{-}}\| \geq C^{-1}{\rm e}^{-|k|\varepsilon}{\rm e}^{\chi n}$,
	\item $\measuredangle(E^{+}_{f^{k}(x)}, E^{-}_{f^{k}(x)})\geq C^{-1}{\rm e}^{-|k|\varepsilon}$.
\end{itemize}
The set $\cR_{C,\chi,\varepsilon}$ is commonly called a \emph{Pesin set}. Note that by definition, $\cR_{C,\chi,\varepsilon}\subset\cR_{C,\chi}$.
%

Given two sub-manifolds $D_{1}$ and $D_{2}$, let $D_{1}\pitchfork D_{2}$ denote the set of $x\in D_{1}\cap D_{2}$ such that $$T_{x}M=T_{x}D_{1}+T_{x}D_{2}.$$ Note that here because of the existence of the flow direction, unlike the case of diffeomorphisms, we do not assume the above sum is a direct sum.

For any $x\in \mathcal{R}$, the \emph{global strong stable and unstable manifolds} at $x$ defined by $$W^{ss}(x):=\left\{y\in M:~\limsup_{t\to+\infty}\frac{1}{t}\log d(\varphi^{t}(x),\varphi^{t}(x))<0\right\},$$ $$W^{uu}(x):=\left\{y\in M:~\limsup_{t\to+\infty}\frac{1}{t}\log d(\varphi^{-t}(x),\varphi^{-t}(x))<0\right\}$$ are injectively immersed sub-manifolds with $T_{x}W^{ss}(x)=E^{-}_{x}, T_{x}W^{uu}(x)=E^{+}_{x}$. We define the corresponding \emph{local strong stable and unstable manifolds} by $$W^{*}_{r}(x):= \,\text{ connected part of}\,\,W^{*}(x)\cap B(x,r)\text{ containing }x,\quad *=ss,uu$$ where $B(x,r)$ is the ball centered at $x$ with radius $r$. Define the \emph{local stable and unstable manifolds} by $$W^{s}_{r}(x):= \bigcup_{t\in(-r,r)}\varphi^{t}\left(W^{ss}_{r}(x)\right),\quad W^{s}_{r}(x):= \bigcup_{t\in(-r,r)}\varphi^{t}\left(W^{ss}_{r}(x)\right).$$ By the classical Pesin theory, these local ones are embedded sub-manifolds for sufficiently small $r$ (which depends on $x$) whose geometries are uniformly controlled like in the uniformly hyperbolic case:
\begin{Theorem}\label{Thm:Pesin}
	Given numbers $C,\chi,\varepsilon$, there is a number $r>0$ such that the local strong stable  and unstable manifolds $W^{ss}_{r}(x), W^{uu}_{r}(x)$ are well defined for every $x\in \cR_{C,\chi,\varepsilon}$. And these manifolds are continuous in the $C^{1}$ topology in the sense that if $x_{n}, x \in  \cR_{C,\chi,\varepsilon}$ with $ x_{n}\to x$, then $$d_{C^{1}}(W^{*}_{r}(x_{n}), W^{*}_{r}(x))\to 0,\quad *=ss, uu, s, u.$$ Consequently, there is a number $\tau>0$ such that for any $x,y\in \cR_{C,\chi,\varepsilon}$, if $d(x,y)<\tau$, then $W_{r}^s(x)\pitchfork W_{r}^u(y)\neq\emptyset$.
\end{Theorem}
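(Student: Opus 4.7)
The plan is to reduce to the classical Pesin theory for the time-one map $\varphi^{1}$ and then account for the flow direction by saturating along orbits. First, I would work in a Lyapunov chart centered at each $x\in\cR_{C,\chi,\varepsilon}$, where the $\varepsilon$-tempered Pesin estimates on $\|D\varphi^{\pm n}|_{E^{\pm}}\|$ and the angle lower bound $\measuredangle(E^{+},E^{-})\geq C^{-1}e^{-|k|\varepsilon}$ make the cocycle uniformly hyperbolic up to a multiplicative error $e^{|k|\varepsilon}$. Applying the Hadamard--Perron graph transform (or equivalently Pesin's stable manifold theorem, see e.g.\ Barreira--Pesin), one obtains local strong stable and strong unstable manifolds $W^{ss}_{r}(x),W^{uu}_{r}(x)$ whose tangent spaces at $x$ are $E^{-}_{x}$ and $E^{+}_{x}$. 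The crucial point is that in the definition of $\cR_{C,\chi,\varepsilon}$ the constants $C,\chi,\varepsilon$ are \emph{uniform} over the whole Pesin block, so the contraction rates of the graph transform are uniform and one can choose a single $r=r(C,\chi,\varepsilon)>0$ that works simultaneously for every $x\in\cR_{C,\chi,\varepsilon}$.

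Next I would obtain the local stable and unstable manifolds $W^{s}_{r}(x),W^{u}_{r}(x)$ by saturating $W^{ss}_{r}(x),W^{uu}_{r}(x)$ by short flow arcs, as in the statement. Because $X$ is non-singular, the flow direction $E^{0}_{x}=\mathrm{Span}(X(x))$ is continuous and uniformly transverse to both $E^{+}_{x}$ and $E^{-}_{x}$ on $\cR_{C,\chi,\varepsilon}$ (the angle bound $C^{-1}$ together with the fact that $X$ has a positive lower bound on its norm over the compact set $M$). Hence the saturated manifolds $W^{s}_{r}(x)$, $W^{u}_{r}(x)$ are embedded $2$-dimensional discs with $T_{x}W^{s}_{r}(x)=E^{-}_{x}\oplus E^{0}_{x}$ and $T_{x}W^{u}_{r}(x)=E^{+}_{x}\oplus E^{0}_{x}$. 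These two tangent planes span $T_{x}M$ (their intersection is $E^{0}_{x}$), which is exactly the notion of transversality allowed in the definition of $\pitchfork$ given just before the theorem.

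For the $C^{1}$ continuity, I would invoke the standard fact that the graph-transform fixed point depends continuously (in the $C^{1}$ topology on the base disc) on the Lyapunov chart data, and the Lyapunov chart data $(x,E^{+}_{x},E^{-}_{x},X(x))$ depends continuously on $x$ along $\cR_{C,\chi,\varepsilon}$ because of the tempered estimates in the definition of the Pesin set. Hence $x_{n}\to x$ in $\cR_{C,\chi,\varepsilon}$ implies $d_{C^{1}}(W^{ss}_{r}(x_{n}),W^{ss}_{r}(x))\to 0$ and similarly for $W^{uu}_{r}$; saturating by the flow (which is $C^{r}$) then gives the same continuity for $W^{s}_{r}$ and $W^{u}_{r}$.

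Finally, the transverse intersection: at any $x\in\cR_{C,\chi,\varepsilon}$ the two $2$-planes $T_{x}W^{s}_{r}(x)$ and $T_{x}W^{u}_{r}(x)$ span $T_{x}M$, so $W^{s}_{r}(x)\pitchfork W^{u}_{r}(x)\ni x$. By the $C^{1}$ continuity above and the compactness of (the closure of) $\cR_{C,\chi,\varepsilon}$ -- or rather by an open-graph argument using that a transverse intersection is stable under $C^{1}$-small perturbations of the two submanifolds -- there exists $\tau>0$ such that whenever $x,y\in\cR_{C,\chi,\varepsilon}$ with $d(x,y)<\tau$, the discs $W^{s}_{r}(x)$ and $W^{u}_{r}(y)$ still meet transversally. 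The main obstacle I expect is bookkeeping in the proof of $C^{1}$ continuity on a non-compact (merely measurable) Pesin block: one needs the tempered factor $e^{|k|\varepsilon}$ and not just the weak Pesin constant $C$, which is precisely why $\cR_{C,\chi,\varepsilon}$ rather than the weak Pesin set $\cR_{C,\chi}$ is used in the statement. Everything else is a direct translation of diffeomorphism Pesin theory once one isolates the zero exponent along the non-vanishing vector field $X$.
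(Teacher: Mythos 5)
Your proposal is correct and takes essentially the same approach as the paper. The paper simply cites Barreira--Pesin (Chapters 7--8) for the existence, uniform size, and $C^1$ continuity of the local manifolds on the Pesin block, and deduces the uniform transverse intersection from that continuity together with the uniform angle lower bound; your write-up spells out the underlying Lyapunov-chart/graph-transform mechanism and the flow-saturation step that the paper leaves to the reference.
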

\begin{proof}
	The properties of  the local strong stable  and unstable manifolds are well known. For example, we refer to Chapter 7 and 8 in the book of Barreira and Pesin \cite{BPe07} for more detailed information. The existence of these manifolds is from \cite[Theorem 7.7.1]{BPe07}. The size estimation is from  \cite[Item 3 in Chapter 8.1]{BPe07}. The continuity is from  \cite[Item 4 in Chapter 8.1]{BPe07}. As a consequence of the continuity and the fact that the angle between $W^{uu}_{r}(x), W^{ss}_{r}(x)$ is uniformly bounded away from zero for all $x\in \cR_{C,\chi,\varepsilon}$, we can find some $\tau\ll r$ such that for any $x,y\in \cR_{C,\chi,\varepsilon}$ with $d(x,y)<\tau$, we have  $W_{r}^s(x)\pitchfork W_{r}^u(y)\neq\emptyset$.
\end{proof}
\begin{Remark}
For simplicity, we define the weak Pesin set and the Pesin set in discrete time ($n \in \mathbb{N}$) rather than continuous time ($t \in \mathbb{R}$). The strong stable and unstable manifolds for the time-one map coincide with those of the flow, allowing us to focus mainly on the discrete time setting.
\end{Remark}

\subsection{Comparison between weak Pesin sets and Pesin sets}
The following result demonstrates that if a weak Pesin set has large measure with respect to an invariant measure, then the corresponding Pesin set also has large measure. More importantly, this property holds uniformly, meaning that the parameters in the definition of Pesin sets are independent of the invariant measures.
\begin{Theorem}\label{Pro:compare-two-sets}
	Let $X$ be a $C^r(r>1)$ non-singular vector field over a three-dimensional compact manifold $M$. For any small $\varepsilon,\alpha>0$, there is $\delta>0$ such that for any $C>1, \chi>0$ and any $\varphi^{1}$-invariant measure $\nu$, if $$\nu(\cR_{C,\chi})>1-\delta,$$ then $$\nu(\cR_{C,\chi, \varepsilon})>1-\alpha.$$
\end{Theorem}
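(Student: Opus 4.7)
The key structural identity is
\[
\cR_{C,\chi,\e}=\bigcap_{k\in\mathbb{Z}}\varphi^{-k}\bigl(\cR_{Ce^{|k|\e},\chi}\bigr),
\]
which follows by rewriting the dilation conditions as one-sided Birkhoff-sum inequalities for $\psi^{\pm}(y):=\log\|D\varphi|_{E^{\pm}_{y}}\|$ (using that $E^{\pm}$ are one-dimensional, so the cocycle $\|D\varphi^{n}_{\varphi^{k}(x)}|_{E^{\pm}}\|$ factors multiplicatively) and bundling all of them with the angle condition into the single measurable ``Pesin constant'' function $C_{\chi}(y):=\inf\{C'\geq 1:y\in\cR_{C',\chi}\}$. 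In this language, $\varphi^{k}(x)\in\cR_{Ce^{|k|\e},\chi}$ is precisely $\log C_{\chi}(\varphi^{k}x)\leq \log C+|k|\e$, so the task becomes: show that under the hypothesis, for a set of $x$ of $\nu$-measure at least $1-\alpha$, this inequality holds for every $k\in\mathbb{Z}$ simultaneously.

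I would then split into long and short times. For long times, I would apply Birkhoff to $\log C_{\chi}$, which is finite $\nu$-a.e.\ on $\mathrm{NUH}_{\chi}$ and tempered along orbits in the sense $|k|^{-1}\log C_{\chi}(\varphi^{k}x)\to 0$ (a consequence of the fact that the Pesin constants only fluctuate sub-linearly, which in turn follows from Birkhoff applied to $\psi^{\pm}-\chi$ and a standard Oseledets-regular argument for the angle). Egorov then provides a set $G\subset M$ of $\nu$-measure $>1-\alpha/2$ and an integer $N=N(\e,\alpha)$ so that $\log C_{\chi}(\varphi^{k}x)\leq |k|\e/2$ for every $x\in G$ and every $|k|\geq N$; on such $x$, the desired inequality is automatic in the long-time regime (using $C\geq 1$). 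For the short-time regime $|k|\leq N$, monotonicity of the weak Pesin sets in their constant together with invariance of $\nu$ give the crude bound
\[
\nu\Bigl(\bigcup_{|k|\leq N}\varphi^{-k}(\cR_{Ce^{|k|\e},\chi}^{c})\Bigr)\leq (2N+1)\nu(\cR_{C,\chi}^{c})< (2N+1)\delta.
\]
Picking $\delta<\alpha/\bigl(2(2N+1)\bigr)$ forces the short-time exceptional set to have $\nu$-measure $<\alpha/2$. Combined with $G$, one deduces $\nu(\cR_{C,\chi,\e})>1-\alpha$.

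\textbf{Main obstacle.} The hard part is extracting the Egorov constant $N$ \emph{uniformly} in the invariant measure $\nu$ (and in $C,\chi$); a priori, the Birkhoff/Egorov rate for $\log C_{\chi}$ depends on $\nu$. I expect this uniformity to be handled either by contradiction combined with weak-$\ast$ compactness of the space of $\varphi^{1}$-invariant measures (a putative sequence of counterexamples $\nu_{m}$ with $\nu_{m}(\cR_{C_{m},\chi_{m}})\to 1$ but $\nu_{m}(\cR_{C_{m},\chi_{m},\e})\leq 1-\alpha$ passes to a limit measure whose properties violate the temperedness of $\log C_{\chi}$), or by a measure-independent oscillation bound exploiting that the one-step variation $|\log C_{\chi}(\varphi y)-\log C_{\chi}(y)|$ is dominated by $\log\|D\varphi^{\pm 1}\|_{\infty}+\chi$, independently of the measure. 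Either way, the uniformity must rest on the fact that the cocycle generators $\psi^{\pm}$ and the angle $\alpha(y)=\measuredangle(E^{+}_{y},E^{-}_{y})$ are attached to the ambient vector field $X$ rather than to any particular $\nu$.
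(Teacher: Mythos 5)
Your structural identity $\cR_{C,\chi,\e}=\bigcap_{k\in\mathbb{Z}}\varphi^{-k}(\cR_{Ce^{|k|\e},\chi})$ is correct, your short-time union bound is fine, and you have put your finger on exactly the right difficulty: the Egorov constant $N$ must be uniform in $\nu$, $C$ and $\chi$, and this is precisely where the proof lives. But the proposal as written leaves that difficulty unresolved, and the Birkhoff/Egorov framing you chose is the wrong tool to resolve it. Of the two escape routes you suggest, the compactness/contradiction route is problematic because $C$ and $\chi$ also range freely, so a putative sequence of counterexamples $(\nu_m, C_m, \chi_m)$ has no reason to converge to a configuration on which the temperedness hypothesis can be evaluated (and even if $\nu_m\to\nu$, the temperedness rate does not pass to the limit). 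The second route you mention --- using that the one-step variation $|\log C_\chi(\varphi y)-\log C_\chi(y)|$ is bounded by a flow-only constant --- is the right idea, but you do not carry it out, and carried out via Birkhoff/Egorov it still yields a rate depending on $\nu$.

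What the paper does instead bypasses Birkhoff and Egorov entirely. It formalizes your ``Pesin constant'' idea as a measurable function $A$ with a uniform one-step ratio bound $K^{-1}\leq A(fx)/A(x)\leq K$ (with $K$ depending only on the flow) and then proves a purely combinatorial, measure-independent maximal inequality (Lemma \ref{SPR for function}): $\nu(\{\overline{A}_\e>t\})\leq\frac{K+1}{\e}\,\nu(\{A>t\})$ for every invariant $\nu$ and every $t$. The proof is a first-passage decomposition of the event $\sup_{n\geq 0}e^{-n\e}A(f^n x)>t$ according to the first $n$ at which the threshold $te^{(n+1)\e}$ is regained, using $f$-invariance to shift each slice back to time zero and the one-step bound $K$ to conclude that the shifted slices overcover $\{A>t\}$ at most $\frac{K+1}{\e}$ times. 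This gives a quantitative estimate with an explicit constant depending only on $K$ and $\e$, which is exactly the uniformity you need but cannot extract from an asymptotic ergodic theorem. The remaining steps in the paper (handling the angle condition by the same lemma applied to $B(x)=\measuredangle(E^+_x,E^-_x)$, and passing from ergodic to general invariant measures by a Markov-type estimate on the ergodic decomposition) are routine; the counting lemma is the missing ingredient in your proposal.
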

We begin by presenting some general results on the comparison of sets that resemble $\cR_{C,\chi}$ and $\cR_{C,\chi,\varepsilon}$.
\begin{Lemma}\label{SPR for function}
	Let $f$ be a homeomorphism on a metric space $X$ and let $A: X\to\mathbb{R}^{+}$ be a measurable function such that 
	\begin{itemize}
		\item for any $x\in X$, $$\lim_{n\to+\infty}\frac{1}{n}\log A(f^{n}(x))=0,$$
		\item there is a constant $K>1$ such that for any $x\in X$, $$K^{-1}\leq\frac{A(f(x))}{A(x)}\leq K.$$
		Then for any $0<\varepsilon<1$, any $t>0$ and any invariant measure $\nu$, $$\nu\left(\{x:~\overline{A}_{\varepsilon}(x)> t\}\right)\leq \frac{K+1}{\varepsilon}\cdot\nu\left(\{x:~A(x)> t\}\right),\quad \nu\left(\{x:~\underline{A}_{\varepsilon}(x)< t\}\right)\leq \frac{K+1}{\varepsilon}\cdot\nu\left(\{x:~A(x)<t\}\right)$$ where $$\overline{A}_{\varepsilon}(x):=\sup_{n\geq 0}{\rm e}^{-n\varepsilon}A(f^{n}(x)),\quad \underline{A}_{\varepsilon}(x):=\inf_{n\geq 0}{\rm e}^{n\varepsilon}A(f^{n}(x)).$$
	\end{itemize}
\end{Lemma}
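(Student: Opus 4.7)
I would first reduce the inequality for $\underline{A}_\varepsilon$ to the one for $\overline{A}_\varepsilon$ by applying the latter to the function $1/A$, which has the same tempered constant $K$ and still satisfies $\frac{1}{n}\log(1/A)(f^n x) \to 0$. The sub-exponential growth hypothesis guarantees that $e^{-n\varepsilon}A(f^n x)\to 0$ for every $x$, so the supremum defining $\overline{A}_\varepsilon(x)$ is attained at some finite index; in particular, the first-hit time $n(x):=\min\{n\geq 0:A(f^n x)>te^{n\varepsilon}\}$ is well-defined and measurable on $\overline{B}_t := \{\overline{A}_\varepsilon > t\}$, with $\{n=0\}=B_t := \{A>t\}$ and $\overline{B}_t=\bigsqcup_{k\geq 0}\{n=k\}$.

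The core of the argument is the transfer map $\Phi : \overline{B}_t \to B_t$ defined by $\Phi(x)=f^{n(x)}(x)$, which is injective on each level set $\{n=k\}$ (since $f$ is a homeomorphism) and satisfies $\Phi(\{n=k\})\subset \{A>te^{k\varepsilon}\}\subset B_t$. Invariance of $\nu$ then gives
\[
\nu(\overline{B}_t)=\sum_{k\geq 0}\nu(\{n=k\})=\int_{B_t}\#\Phi^{-1}(y)\,d\nu(y),
\]
reducing the task to an averaged bound on the multiplicity $\#\Phi^{-1}(y)$. For a fixed $y\in B_t$, a point $f^{-k}(y)$ lies in $\Phi^{-1}(y)$ precisely when $A(y)>te^{k\varepsilon}$ together with the ``no earlier hit'' conditions $A(f^{-j}(y))\leq te^{(k-j)\varepsilon}$ for every $j=1,\ldots,k$. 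Combining these with the tempered lower bound $A(f^{-j}(y))\geq K^{-j}A(y)$ forces any two valid indices $k_1<k_2$ to satisfy a multiplicative gap of the form $k_2\varepsilon<k_1\log K$, producing a geometric structure on the set of valid $k$'s for each $y$.

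To convert this multiplicity structure into the explicit factor $(K+1)/\varepsilon$, I would pair it with the set-theoretic recursion $\overline{B}_t = B_t \cup f^{-1}(\overline{B}_{te^\varepsilon})$ coming from the identity $\overline{A}_\varepsilon(x)=\max(A(x),e^{-\varepsilon}\overline{A}_\varepsilon(fx))$, together with $\nu(\overline{B}_{te^{N\varepsilon}})\to 0$ as $N\to\infty$. Iteration yields $\nu(\overline{B}_t) \leq \sum_{k\geq 0}\nu(B_{te^{k\varepsilon}})$, and one must then use the tempered condition in a cascading manner to telescope the right-hand side into a constant multiple of $\nu(B_t)$. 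The main obstacle is exactly this last step: the naive bound $\nu(B_{te^{k\varepsilon}})\leq \nu(B_t)$ (which is the best that invariance plus tempering gives termwise) leaves the series divergent, so the proof has to invoke the geometric gap between consecutive valid $k$'s derived above to group the contributions into $\lfloor\log K/\varepsilon\rfloor$-sized blocks and extract genuine decay. The constant $(K+1)/\varepsilon$ then emerges as the natural pairing of the exponential discount rate $\varepsilon$ with the tempered factor $K$, the ``$+1$'' accounting for the boundary contribution from the level $k=0$ where $\Phi$ is the identity.
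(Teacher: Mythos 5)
Your overall strategy — reduce the $\underline{A}_\varepsilon$ case to the $\overline{A}_\varepsilon$ case via $1/A$, then decompose $\{\overline{A}_\varepsilon>t\}$ by a first-hit time $n(x)$ and push forward by the transfer map $\Phi(x)=f^{n(x)}(x)$, using invariance to convert the problem into a multiplicity bound on $\#\Phi^{-1}(y)$ — is sound and genuinely different from the paper's route. The paper instead decomposes by the \emph{last crossing}: it covers $\{\overline{A}_\varepsilon>t\}$ by the sets $\{A(f^nx)>te^{n\varepsilon},\ A(f^{n+1}x)\le te^{(n+1)\varepsilon}\}$ (which exist because $e^{-n\varepsilon}A(f^nx)\to 0$), shifts each by $f^{-n}$ using invariance, applies the tempered bound $A(f(x))\ge K^{-1}A(x)$ to turn the pair of conditions into the one-variable band $\{te^{n\varepsilon}<A(x)\le Kte^{(n+1)\varepsilon}\}$, and then counts directly how many of these bands can contain a given point of $\{A>t\}$, namely at most $K/\varepsilon+1\le(K+1)/\varepsilon$. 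That argument closes in four lines and never needs to track the full history of the orbit; your argument, because it uses the first hit, must track all backward iterates in the ``no earlier hit'' condition, which is where your execution goes wrong.

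The concrete gap: the claimed constraint ``$k_2\varepsilon<k_1\log K$'' on two valid indices $k_1<k_2$ of $\Phi^{-1}(y)$ is incorrect and, even if true, would not bound the multiplicity (it allows $k_2$ to grow linearly in $k_1$). The correct comparison uses only $j=1$ and runs forward, not backward: validity of $k_2$ gives $A(y)>te^{k_2\varepsilon}$, validity of $k_1\ge1$ gives $A(f^{-1}(y))\le te^{(k_1-1)\varepsilon}$, and the tempered condition $A(y)\le K\,A(f^{-1}(y))$ then yields $te^{k_2\varepsilon}<Kte^{(k_1-1)\varepsilon}$, i.e. $(k_2-k_1+1)\varepsilon<\log K$. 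This shows all valid $k\ge1$ lie in a single window of width $<\log K/\varepsilon-1$, so the multiplicity is at most $1+\log K/\varepsilon\le(K+1)/\varepsilon$ (using $\varepsilon<1$ and $\log K<K$), and the transfer-map argument closes on its own. Your subsequent detour through the recursion $\overline{B}_t=B_t\cup f^{-1}(\overline{B}_{te^\varepsilon})$ and the proposed ``block grouping'' of $\sum_k\nu(B_{te^{k\varepsilon}})$ is unnecessary and, as you yourself note, does not produce a finite bound as stated — abandon it. Fix the gap inequality and complete the multiplicity count; the rest of your skeleton is fine.
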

\begin{proof} We first prove the case $\nu(\{x:~\overline{A}_{\varepsilon}(x)> t\})\leq \frac{K}{\varepsilon}\cdot\nu(\{x:~A(x)> t\})$. 
	$$\begin{aligned}
		\nu\left(\{x:~\overline{A}_{\varepsilon}(x)> t\}\right)
		&\leq \sum_{n=0}^{+\infty}\nu\left(\{x:~A(f^{n}(x))> t\cdot {\rm e}^{n\varepsilon},\,A(f^{n+1}(x))\leq t\cdot {\rm e}^{(n+1)\varepsilon}\}\right)\\
		&=\sum_{n=0}^{+\infty}\nu\left(\{x:~A(x)> t\cdot {\rm e}^{n\varepsilon},\,A(f(x))\leq t\cdot {\rm e}^{(n+1)\varepsilon}\}\right)\\
		&\leq\sum_{n=0}^{+\infty}\nu\left(\{x:~A(x)> t\cdot {\rm e}^{n\varepsilon},\,A(x)\leq K\cdot t\cdot {\rm e}^{(n+1)\varepsilon}\}\right)\\
		&\leq\sum_{n=0}^{+\infty}\nu\left(\{x:~A(x)> t\cdot {\rm e}^{n\varepsilon},\,A(x)\leq t\cdot {\rm e}^{(n+\frac{K}{\varepsilon})\varepsilon}\}\right)\\
		&\leq \frac{K+1}{\varepsilon}\cdot\nu\left(\{x:~A(x)> t\}\right).	
	\end{aligned}$$	Note for the last step, we used the observation that  the preceding level sets $\{x:~A(x)> t\cdot {\rm e}^{n\varepsilon},\,A(x)\leq t\cdot {\rm e}^{(n+\frac{K}{\varepsilon})}\}$ can cover $\{x:~A(x)> t\}$ at most $\frac{K}{\varepsilon} + 1$ ($\leq \frac{K+1}{\varepsilon}$) times.

		The proof for the other case is analogous. But for completeness, we also provide detailed arguments.
		$$\begin{aligned}
		\nu\left(\{x:~\underline{A}_{\varepsilon}(x)<t\}\right)
		&\leq \sum_{n=0}^{+\infty}\nu\left(\{x:~A(f^{n}(x))<t\cdot {\rm e}^{-n\varepsilon},\,A(f^{n+1}(x))\geq t\cdot {\rm e}^{-(n+1)\varepsilon}\}\right)\\
		&=\sum_{n=0}^{+\infty}\nu\left(\{x:~A(x)<t\cdot {\rm e}^{-n\varepsilon},\,A(f(x))\geq t\cdot {\rm e}^{-(n+1)\varepsilon}\}\right)\\
		&\leq\sum_{n=0}^{+\infty}\nu\left(\{x:~A(x)<t\cdot {\rm e}^{-n\varepsilon},\,A(x)\geq K^{-1}\cdot t\cdot {\rm e}^{-(n+1)\varepsilon}\}\right)\\
		&\leq\sum_{n=0}^{+\infty}\nu\left(\{x:~A(x)<t\cdot {\rm e}^{-n\varepsilon},\,A(x)\geq t\cdot {\rm e}^{-(n+\frac{K}{\varepsilon})\varepsilon}\}\right)\\
		&\leq \frac{K+1}{\varepsilon}\cdot\nu\left(\{x:~A(x)< t\}\right).	
	\end{aligned}$$	
\end{proof}
Now we proceed to the proof of Theorem \ref{Pro:compare-two-sets}. The key idea is to apply Lemma \ref{SPR for function} within an appropriately chosen framework.
\begin{proof}[Proof Theorem \ref{Pro:compare-two-sets}]
	We note that there are several conditions in the definition of the sets $\cR_{C,\chi}$ and $\cR_{C,\chi,\varepsilon}$. For simplicity, we focus on just two of these conditions to illustrate the main idea of the proof and the theorem follows by adding more conditions, which can be handled in the same manner. We define 
	$$\Lambda_{C,\chi}=\{x\in {\rm NUH}_{\chi}:~\forall n\in\NN, \|D\varphi^{-n}_{x}|_{E^{+}}\| \leq C{\rm e}^{-\chi n}\},$$ $$\Lambda_{C,\chi,\varepsilon}=\{x\in {\rm NUH}_{\chi}:~\forall n\in\NN, k\in\NN, \|D\varphi^{-n}_{\varphi^{k}(x)}|_{E^{+}}\| \leq C{\rm e}^{k\varepsilon}{\rm e}^{-\chi n}\},$$	
	$$\Delta_{C,\chi}=\{x\in {\rm NUH}_{\chi}:~\measuredangle(E^{+}_{x}, E^{-}_{x})\geq C^{-1}\},$$ $$\Delta_{C,\chi,\varepsilon}=\{x\in {\rm NUH}_{\chi}:~\forall k\in\NN,\,\measuredangle(E^{+}_{f^{k}(x)}, E^{-}_{f^{k}(x)})\geq C^{-1}{\rm e}^{-k\varepsilon}\}.$$
	We first prove the following result w.r.t. $\varphi^{1}$-ergodic measures and then extend it to $\varphi^{1}$-invariant measures (which proves the theorem). 
	\begin{Claim}For any small $\varepsilon,\alpha>0$, there is $\delta>0$ such that for any $C>1, \chi>0$ and any ergodic measure $\nu$,
		\begin{enumerate}
			\item if $\nu(\Lambda_{C,\chi})> 1-\delta$, then $\nu(\Lambda_{C,\chi,\varepsilon})> 1-\alpha$,
			\item if $\nu(\Delta_{C,\chi})> 1-\delta$, then $\nu(\Delta_{C,\chi,\varepsilon})> 1-\alpha$.
		\end{enumerate}
		
	\end{Claim}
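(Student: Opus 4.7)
The plan is to apply Lemma \ref{SPR for function} twice, once for each of the two items, using appropriate non-negative measurable functions $A_1, A_2$ on the manifold $M$ engineered so that $\{A_i \leq C\}$ captures the weak-Pesin condition and $\{\overline{(A_i)}_\varepsilon \leq C\}$ captures the Pesin condition. Since $\Lambda_{C,\chi}, \Delta_{C,\chi} \subset {\rm NUH}_\chi$ and ${\rm NUH}_\chi$ is $\varphi^1$-invariant, ergodicity of $\nu$ together with $\nu(\Lambda_{C,\chi}) > 0$ (resp.\ $\nu(\Delta_{C,\chi}) > 0$) forces $\nu({\rm NUH}_\chi) = 1$, so the Oseledets splitting $T_xM = E^+_x \oplus E^0_x \oplus E^-_x$ is defined $\nu$-a.e.

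For item (1), set, on ${\rm NUH}_\chi$,
\[
A_1(x) := \sup_{n \geq 0} e^{\chi n} \|D\varphi^{-n}_x|_{E^+}\|,
\]
and extend $A_1 \equiv 1$ off ${\rm NUH}_\chi$. For item (2), set $A_2(x) := 1/\measuredangle(E^+_x, E^-_x)$ on ${\rm NUH}_\chi$ and $A_2 \equiv 1$ elsewhere. Unwinding the definitions gives $\{A_1 \leq C\} \cap {\rm NUH}_\chi = \Lambda_{C,\chi}$ and $\{\overline{(A_1)}_\varepsilon \leq C\} \cap {\rm NUH}_\chi = \Lambda_{C,\chi,\varepsilon}$, with analogous identities for $A_2$. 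The tempering hypothesis $n^{-1}\log A_i(\varphi^n(x)) \to 0$ holds $\nu$-a.e.\ for $i = 1, 2$ by the standard tempered-cocycle behavior of the Oseledets constants and the angle function on the Lyapunov regular set.

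The main (and essentially only) obstacle will be to verify the bounded-ratio hypothesis of Lemma \ref{SPR for function} with a constant $K$ independent of $C, \chi, \nu$. Write $K_1 := \sup_x \|D\varphi^1_x\|$ and $K_2 := \sup_x \|D\varphi^{-1}_x\|$. Since $E^+$ is one-dimensional, the cocycle identity $\|D\varphi^{-n}_{\varphi(x)}|_{E^+}\| = \|D\varphi^{-(n-1)}_x|_{E^+}\|/\|D\varphi|_x|_{E^+}\|$ gives
\[
A_1(\varphi(x)) = \max\bigl\{1,\ e^\chi\, \|D\varphi|_x|_{E^+}\|^{-1}\, A_1(x)\bigr\},
\]
with $\|D\varphi|_x|_{E^+}\| \in [K_2^{-1}, K_1]$. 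A priori the ratio $A_1(\varphi(x))/A_1(x)$ involves $e^\chi$, but ergodicity rescues us: $\nu(\Lambda_{C,\chi}) > 0$ forces $\chi \leq \lambda^+(\nu) \leq \lambda^+(X) \leq \log K_1$, whence $e^\chi \leq K_1$. Using also $A_1 \geq 1$, this yields $A_1(\varphi(x))/A_1(x) \in [K_1^{-1}, K_1 K_2]$, uniformly in $\chi$ and $\nu$. For $A_2$, the analogous estimate follows from the bi-Lipschitz action of $D\varphi$ on pairs of lines in $TM$, with constants depending only on $K_1, K_2$ and no appearance of $\chi$.

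With a common constant $K > 1$ in hand, Lemma \ref{SPR for function} yields $\nu(\{\overline{(A_i)}_\varepsilon > C\}) \leq (K+1)\varepsilon^{-1}\nu(\{A_i > C\})$ for $i = 1, 2$. Setting $\delta := \alpha\varepsilon/(K+1)$ then gives $\nu(\Lambda_{C,\chi,\varepsilon}^c) < \alpha$ and $\nu(\Delta_{C,\chi,\varepsilon}^c) < \alpha$ under the respective hypotheses, proving both items of the Claim. The extension from ergodic to invariant $\nu$ in Theorem \ref{Pro:compare-two-sets} is then standard via ergodic decomposition, since the quantitative inequalities in the Claim pass to integrals over the decomposition.
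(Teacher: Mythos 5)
Your proposal is correct and follows essentially the same strategy as the paper: both introduce the auxiliary function $A(x) = \sup_{n\ge 0}e^{\chi n}\|D\varphi^{-n}_x|_{E^+}\|$ on ${\rm NUH}_\chi$, verify the tempered-ratio hypothesis of Lemma~\ref{SPR for function}, apply that lemma, and set $\delta = \alpha\varepsilon/(K+1)$. Your treatment of the angle condition via $A_2 = 1/B$ and $\overline{(A_2)}_\varepsilon$ is equivalent to the paper's use of $B(x)=\measuredangle(E^+_x,E^-_x)$ and $\underline{B}_\varepsilon$.

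One genuine point in your favor: you explicitly address why the ratio constant $K$ can be chosen independently of $\chi$ and the reference measure, which the paper merely asserts (``there is a constant $K>1$ depending only on the flow such that $\frac{A(\varphi^1(x))}{A(x)}\le K$''). Your computation $A_1(\varphi(x)) = \max\{1, e^\chi\,\|D\varphi^1_x|_{E^+}\|^{-1}A_1(x)\}$ shows the ratio a priori contains $e^\chi$; you then note that any point of ${\rm NUH}_\chi$ forces $\chi < \lambda^+(X) \le \log K_1$, giving $e^\chi < K_1$ and hence $K = K_1K_2$ uniformly. (The same resolution is available pointwise without invoking ergodicity, since ${\rm NUH}_\chi$ is empty once $\chi \ge \log K_1$, but your route via $\lambda^+(\nu)$ is equally valid.) Note also you need both sides of the tempered-ratio inequality to invoke Lemma~\ref{SPR for function} as stated --- you correctly supply both, whereas the paper's bullet states only the upper bound. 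For the tempering $n^{-1}\log A_i(\varphi^n(x))\to 0$, you invoke Oseledets regularity while the paper cites Lemma~\ref{Mane Temper function} (Ma\~{n}\'{e}'s tempering), which derives it directly from the bounded-ratio hypothesis and is slightly more self-contained; both are correct.
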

	\begin{proof}
		We note that ${\rm NUH}_{\chi}\subset\cR$ is an invariant subset. Define a function $A: {\rm NUH}_{\chi}\to \mathbb{R}$ by $$A(x)=\sup_{n\geq 0} \|D\varphi^{-n}_{x}|_{E^{+}}\|\cdot {\rm e}^{\chi n}.$$ We note by definition that $A$ is a well defined measurable function. Moreover we have the following properties of $A$:
		\begin{itemize}
			\item there is a constant $K>1$ depending only on the flow such that $$\frac{A(\varphi^{1}(x))}{A(x)}\leq K,\quad\forall\, x\in {\rm NUH}_{\chi},$$ 
			\item for any $C>1$, $\Lambda_{C,\chi}=\{x\in{\rm NUH}_{\chi}:~ A(x)\leq C\}$,
			\item by Lemma \ref{Mane Temper function} in Appendix, there is an invariant subset $\Omega$ of ${\rm NUH}_{\chi}$ with total measure (i.e., $\mu(\Omega)=1$ for any ergodic measure $\mu$ with $\mu({\rm NUH}_{\chi})=1$) such that for any $x\in\Omega$ ,\, $$\lim_{n\to+\infty}\frac{1}{n}\log A(\varphi^{n}(x))=0.$$ For convenience, by replacing ${\rm NUH}_{\chi}$ with the total measure set $\Omega$, we may assume the convergence above holds for any $x\in{\rm NUH}_{\chi}$. This assumption is justified because we are only concerned with ergodic measures $\nu$ satisfying $\nu(\Lambda_{C,\chi})>0$, in which case ergodicity ensures that $\nu(\Omega) = \nu({\rm NUH}_{\chi}) = 1$.
			
		\end{itemize}
		By the last property above, the measurable function $A_{\varepsilon}: {\rm NUH}_{\chi}\to \mathbb{R}$ $$A_{\varepsilon}(x):=\sup_{k\geq 0}{\rm e}^{-k\varepsilon}A(\varphi^{k}(x))$$ is well defined. We note the following properties:
		\begin{itemize}
			\item $A_{\varepsilon}(x)\geq A(x)$, for any $x\in{\rm NUH}_{\chi}$,
			\item ${\rm e}^{-\varepsilon}\leq A_{\varepsilon}(f(x))/A_{\varepsilon}(x)\leq {\rm e}^{\varepsilon}$, for any $x\in{\rm NUH}_{\chi}$,
			\item for any $C>1$ and any small $\varepsilon>0$, $\Lambda_{C,\chi,\varepsilon}=\{x\in{\rm NUH}_{\chi}:~ A_{\varepsilon}(x)\leq C\}$.
		\end{itemize}
		By Lemma \ref{SPR for function}, for any $t>0$ and any $\varphi^{1}$-ergodic measure $\nu$, 
		\begin{equation}\label{consequence of appendix}
			\nu\left(\{x\in{\rm NUH}_{\chi}:~A_{\varepsilon}(x)> t\}\right)\leq \frac{K+1}{\varepsilon}\cdot\nu\left(\{x\in{\rm NUH}_{\chi} :~A(x)> t\}\right).
		\end{equation}
		
		Given small $\alpha>0$, let $\delta=\frac{\alpha\varepsilon}{K+1}$. For any $\varphi^{1}$-ergodic measure $\nu$ with $\nu(\Lambda_{C,\chi})>1-\delta$, we have 
		\begin{equation}\label{property of A(x)}
			\nu\left(\{x\in{\rm NUH}_{\chi}:~ A(x)> C\}\right)=\nu(M\setminus \Lambda_{C,\chi})\leq \delta.
		\end{equation}
		
		Recall that 
		\begin{equation}\label{property of A varepsilon(x)}
			\nu\left(\{x\in{\rm NUH}_{\chi}:~ A_\varepsilon(x)> C\}\right)=\nu(M\setminus \Lambda_{C,\chi,\varepsilon}).
		\end{equation}
		Combining (\ref{consequence of appendix}),(\ref{property of A(x)}) and (\ref{property of A varepsilon(x)}), we have $$\nu(M\setminus \Lambda_{C,\chi,\varepsilon})\leq \alpha,$$ and consequently, $$\nu( \Lambda_{C,\chi,\varepsilon})> 1-\alpha.$$
		
		Next we turn to the condition of angles. The approach is analogous to the case of $\Lambda_{C,\varepsilon}$ discussed above, with Lemma \ref{SPR for function} playing a central role. Denote the angle functions $B, B_{\varepsilon}: {\rm NUH}_{\chi}\to \mathbb{R}$ by $$B(x):=\measuredangle(E^{+}_{x}, E^{-}_{x}),\quad B_{\varepsilon}(x):=\inf_{k\geq 0}\measuredangle(E^{+}_{f^{k}(x)}, E^{-}_{f^{k}(x)})\cdot{\rm e}^{k\varepsilon}.$$We note the following properties of $B, B_{\varepsilon}$:
		\begin{itemize}
			\item there is a constant $K'>1$ depending only on the flow such that $$\frac{B(\varphi^{1}(x))}{B(x)}\leq K',\quad\forall\, x\in {\rm NUH}_{\chi},$$ 
			\item for any $C>1$, $\Delta_{C,\chi}=\{x\in{\rm NUH}_{\chi}:~ B(x)\geq C^{-1}\}$,
			\item for any $C>1$ and any small $\varepsilon>0$, $\Delta_{C,\chi,\varepsilon,}=\{x\in{\rm NUH}_{\chi}:~ B_{\varepsilon}(x)\geq C^{-1}\}$,
			\item by Oseledets Theorem, for any $x\in {\rm NUH}_{\chi}$,\, $\lim_{n\to+\infty}\frac{1}{n}\log B(\varphi^{n}(x))=0$.
		\end{itemize}
		Again we apply Lemma \ref{SPR for function} to the function $B$ and we get that for any $t>0$ and any $\varphi^{1}$-ergodic measure $\nu$, 	$$\nu\left(\{x\in{\rm NUH}_{\chi}:~B_{\varepsilon}(x)<C^{-1}\}\right)\leq \frac{K'+1}{\varepsilon}\cdot\nu\left(\{x\in{\rm NUH}_{\chi} :~B(x)<C^{-1}\}\right).$$ Given $\alpha>0$ small, let $\delta=\frac{\alpha\varepsilon}{K'+1}$. For any $\varphi^{1}$-ergodic measure $\nu$ with $\nu(\Delta_{C,\chi})>1-\delta$, we have the following properties like in the previous case: $$\nu\left(\{x\in{\rm NUH}_{\chi}:~ B(x)< C^{-1}\}\right)=\nu(M\setminus\Delta_{C,\chi})\leq \delta,$$ $$\nu\left(\{x\in{\rm NUH}_{\chi}:~ B_\varepsilon(x)< C^{-1}\}\right)=\nu(M\setminus\Delta_{C,\chi,\varepsilon}).$$ This gives $$\nu(M\setminus\Delta_{C,\chi,\varepsilon})\leq \alpha,$$ and consequently, $$\nu( \Delta_{C,\chi,\varepsilon})> 1-\alpha.$$
		
		The proof of the claim is complete.
	\end{proof}
	Next, we will use the ergodic decomposition to show that the claim above holds for any $\varphi^{1}$-invariant measure which proves the theorem. We will only show the statement for $\Lambda_{C,\chi}$ as the statement for $\Delta_{C,\chi}$ can be proved analogously. 
	
	Given any small $\varepsilon,\alpha>0$, the claim above provides a small number $\widetilde{\delta}>0$ such that for any $C>1, \chi>0$ and any $\varphi^{1}$-ergodic measure $m$, if $$m(\Lambda_{C,\chi})> 1-\widetilde{\delta},$$ then $$m(\Lambda_{C,\chi,\varepsilon})> 1-\frac{\alpha}{2}.$$ We may further assume $\widetilde{\delta}>0$ is small enough such that $$(1-\widetilde{\delta})\cdot(1-\frac{\alpha}{2})>(1-\alpha).$$ 
	
	Let $$\delta:=\widetilde{\delta}^{2}.$$ Given an $\varphi^{1}$-invariant $\nu$ with $\nu(\Lambda_{C,\chi})> 1-\delta=1-\widetilde{\delta}^{2}$, we consider the ergodic decomposition of $\nu$ and let $\widetilde{\nu}$ be the corresponding probability measure associated with $\nu$ on the space of ergodic measures (w.r.t. $\varphi^{1}$). We note that there is a collection $\mathcal{V}$ of the ergodic components of $\nu$ with $\widetilde{\nu}(\mathcal{V})>1-\widetilde{\delta}$ such that for each component $m\in \mathcal{V}$, $$m(\Lambda_{C,\chi})>1-\widetilde{\delta}.$$ Because otherwise, we would have $$\nu(\Lambda_{C,\chi})=\int m(\Lambda_{C,\chi})\,d\,\widetilde{\nu}(m)\leq 1-\widetilde{\delta}\cdot\widetilde{\delta}=1-\delta$$ which leads to a contradiction. Hence, we have $$\nu(\Lambda_{C,\chi,\varepsilon})=\int m(\Lambda_{C,\chi,\varepsilon})\,d\,\widetilde{\nu}(m)\geq\int_{\mathcal{V}} m(\Lambda_{C,\chi,\varepsilon})\,d\,\widetilde{\nu}(m)> (1-\widetilde{\delta})\cdot(1-\frac{\alpha}{2})>1-\alpha.$$

	 The proof of Theorem \ref{Pro:compare-two-sets} is complete.
\end{proof}

\section{Finiteness of MMEs}
\subsection{Uniform largeness of the weak Pesin sets}
Recall that the weak Pesin set $\cR_{C,\chi}$ in Section \ref{size of local manifolds} is the set of $x\in{\rm NUH}_{\chi}$ such that for any $ n\in\NN$, 

\begin{itemize}
	\item  $\|D\varphi^{-n}_{x}|_{E^{+}}\| \leq C{\rm e}^{-\chi n},\quad \|D\varphi^{n}_{x}|_{E^{+}}\| \geq C^{-1}{\rm e}^{\chi n}$,
	\item  $\|D\varphi^{n}_{x}|_{E^{-}}\| \leq C{\rm e}^{-\chi n},\quad \|D\varphi^{-n}_{x}|_{E^{-}}\| \geq C^{-1}{\rm e}^{\chi n}$,
	\item $\measuredangle(E^{+}_{x}, E^{-}_{x})\geq C^{-1}$.
\end{itemize}
We establish that the weak Pesin sets are uniformly large for measures with large Lyapunov exponents. This result fundamentally follows from the continuity of Lyapunov exponents (see Theorem \ref{continuity of exponents for flow}). 

\begin{Theorem}\label{General result of Thm:limit measure carries large exponent everywhere}
Let $X$ be a $C^r(r>1)$ non-singular vector field over a three-dimensional compact manifold $M$. Assume that $\mu_n$ is a sequence of $\varphi^{1}$-invariant measures with $\mu_{n}\to\mu$ for some $\varphi^{1}$-invariant measure $\mu$ such that
	\begin{itemize}
		\item $\mu({\rm NUH}_{2\chi})=1$ for some $\chi>0$,		
		\item $\hat{\mu}^{+}_{n}, \hat{\mu}^{-}_{n}$ converge respectively to $\hat{\mu}^{+}, \hat{\mu}^{-}$.
	\end{itemize}
	Then for any $\delta>0$, there are $C>0$ and $N\in\NN$ such that for any $n>N$,
	$$\mu_n(\cR_{C,\chi})>1-\delta.$$
\end{Theorem}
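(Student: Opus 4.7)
Lift everything to the projective tangent bundle $\hat M$ and encode the weak Pesin conditions geometrically there. With $T_k(\hat x):=\log\|D\varphi^k_x|_E\|$ for $\hat x=(x,E)\in\hat M$ and $k\in\ZZ$ (continuous in $\hat x$ by continuity of $\rho$ and the cocycle relation), define the closed subset
\[
\Psi^+_C:=\{\hat x:\ T_k(\hat x)\ge\chi k-\log C\ \forall k\ge 0,\ T_{-k}(\hat x)\le-\chi k+\log C\ \forall k\ge 0\},
\]
and analogously $\Psi^-_C$ with signs reversed to capture the stable direction. Then $\cR_{C,\chi}$ is exactly the set of $x\in M$ such that $(x,E^+_x)\in\Psi^+_C$, $(x,E^-_x)\in\Psi^-_C$, and $\measuredangle(E^+_x,E^-_x)\ge C^{-1}$.

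\textbf{Large $\hat\mu^{\pm}$-measure.}
For $\hat\mu^+$-a.e.\ $\hat x=(x,E^+_x)$, Birkhoff's theorem applied to $\rho$ gives $T_k(\hat x)/k\to\lambda^+(\varphi,x)\ge 2\chi$, since $\mu({\rm NUH}_{2\chi})=1$. Hence the functions $V^+(\hat x):=\sup_{k\ge 0}(\chi k-T_k(\hat x))$ and $V^-(\hat x):=\sup_{k\ge 0}(T_{-k}(\hat x)+\chi k)$ are both $\hat\mu^+$-a.e.\ finite, so $\hat\mu^+(\Psi^+_C)\uparrow 1$ as $C\to\infty$. Symmetrically, $\hat\mu^-(\Psi^-_C)\uparrow 1$.

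\textbf{Transfer to the sequence.}
This is the technical heart. First I ergodic-decompose $\mu_n=\int\mu^e_n\,d\theta_n(e)$ and split it into a ``low-exponent'' part (components with positive exponent $\le 2\chi-\eta$, $\theta_n$-weight $B_n(\eta)$) and its complement. A contradiction argument shows $B_n(\eta)\to 0$ for every $\eta>0$: otherwise a subsequence of the renormalized low-exponent parts has a weak-* limit, and upper semi-continuity of $\hat x\mapsto\limsup_k T_k(\hat x)/k$ combined with portmanteau for closed sets forces positive $\mu$-mass on $\{\lambda^+\le 2\chi-\eta\}$, contradicting $\mu({\rm NUH}_{2\chi})=1$. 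For the complementary ``good'' part (components with exponent $>2\chi-\eta>\chi$ for a fixed $\eta\in(0,\chi)$), I need the uniform tail bound $\hat\mu^{+,\mathrm{good}}_n(V^+>\log C)\le\varepsilon$. The key tool is a Mañé-temper-function argument analogous to Lemma~\ref{SPR for function}: the function $V^+$ satisfies the sub-cocycle inequality $V^+(\hat\varphi\hat x)\le\max(0,V^+(\hat x)+\rho(\hat x)-\chi)$, and because $\int(\rho-\chi)\,d\hat\mu^+_n\to\int(\rho-\chi)\,d\hat\mu^+\ge\chi>0$, the resulting maximal inequality yields a tail bound on $\hat\mu^+_n(V^+>\log C)$ that is uniform for $n$ large.

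\textbf{Angle condition, synthesis, and main obstacle.}
The angle function is continuous on the fibered product $\hat M\times_M\hat M$. The combined lift $\hat{\hat\mu}_n:=(x\mapsto((x,E^+_x),(x,E^-_x)))_\ast\mu_n$ has marginals $\hat\mu^{\pm}_n$; since $\hat\mu^{\pm}$ are graphs over $\mu$ (their disintegrations over $\pi$ are Dirac masses at the Oseledets directions), the weak-* limit $\hat{\hat\mu}$ is uniquely determined by its two marginals, so $\hat{\hat\mu}_n\to\hat{\hat\mu}$. An atom-counting argument makes $\{\measuredangle\ge C^{-1}\}$ a $\hat{\hat\mu}$-continuity set for all but countably many $C$, and portmanteau transfers the angle bound. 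Combining the three uniform bounds by a union bound and choosing $C$ large enough gives $\mu_n(\cR_{C,\chi})\ge 1-\delta$ for $n$ large. The main obstacle is the uniform tail estimate in Step~3: since $V^+$ is only lower semi-continuous, $\Psi^+_C$ is closed but its topological boundary need not be $\hat\mu^+$-negligible, so naive portmanteau gives the wrong-direction inequality $\limsup\hat\mu^+_n(\Psi^+_C)\le\hat\mu^+(\Psi^+_C)$. The Mañé-temper-function machinery circumvents this by converting the transfer into a maximal inequality for an invariant cocycle whose positive average $\int(\rho-\chi)\,d\hat\mu^+_n>0$ supplies the needed uniformity.
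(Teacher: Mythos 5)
The high-level strategy (lift to $\hat M$, exploit the convergence $\hat\mu_n^\pm\to\hat\mu^\pm$, encode the weak-Pesin conditions by a drawdown function $V^\pm$, and handle the angle separately) is the right one and matches the paper in spirit. However, the ``Transfer to the sequence'' step contains a genuine gap, and it is exactly at the place you flag as the main obstacle.

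The claimed mechanism is: $V^+$ satisfies the sub-cocycle inequality $V^+(\hat\varphi\hat x)\le\max\bigl(0,\,V^+(\hat x)+\rho(\hat x)-\chi\bigr)$, and since $\int(\rho-\chi)\,d\hat\mu_n^+\to\int(\rho-\chi)\,d\hat\mu^+\ge\chi>0$, a ``maximal inequality'' gives a tail bound on $\hat\mu_n^+(V^+>\log C)$ uniform in $n$. This does not follow. For a fixed ergodic $\nu$ with $\int(\rho-\chi)\,d\nu>0$, the Pliss-type lower bound one extracts from the sub-cocycle structure is a \emph{positive lower bound} on $\nu(V^+=0)$ of order $\frac{\int(\rho-\chi)\,d\nu}{\max\rho-\chi}$, not a bound forcing $\nu(V^+>C)$ to be small for $C$ large; and a Kac-type argument only converts $\nu(V^+>C)\to0$ (true for each fixed $\nu$) into something uniform if one controls the \emph{uniform integrability} of the return time to $\{V^+=0\}$, which you do not have. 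A positive average and the sub-cocycle inequality alone are compatible with arbitrarily heavy drawdown tails: roughly, ergodic measures with long runs of small $\rho$ compensated by rare large-$\rho$ excursions can keep $\int(\rho-\chi)$ bounded below while $\nu(V^+>C)$ stays bounded away from $0$ for any fixed $C$. Your ``remove low-exponent ergodic components'' preprocessing does not repair this, because the tail of $V^+$ is governed by fluctuations, not just by the exponent; Lemma~\ref{SPR for function} is also not applicable here, since it compares two level sets under the \emph{same} measure, whereas you need to compare the same set under $\hat\mu^+$ and under $\hat\mu_n^+$.

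The missing idea, which is what the paper actually uses (Lemma~\ref{Lem: Pliss-Like Lemma1} and Lemma~\ref{Lem: Pliss-Like Lemma2}), is to pass through an \emph{$N$-step} Birkhoff set. Fix a large $N$ and consider the open set $\Lambda_N=\{\hat x:\ \frac1N\sum_{i<N}\rho(\hat\varphi^i\hat x)>\tfrac32\chi\}$. Birkhoff gives $\hat\mu^+(\Lambda_N)$ close to $1$, and because $\Lambda_N$ is open and the $N$-step average is a \emph{continuous} function, lower semi-continuity of open sets under weak-$\ast$ convergence gives $\hat\mu_n^+(\Lambda_N)>1-\text{small}$ for $n$ large — this is the only place the hypothesis $\hat\mu_n^+\to\hat\mu^+$ enters, and it is precisely what replaces your non-working maximal inequality. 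One then ergodically decomposes $\hat\mu_n^+$ and uses the elementary Pliss consequence (Lemma~\ref{Lem:abstract-ergodic}) to upgrade ``most iterates lie in $\Lambda_N$'' into the all-time drawdown bound defining $\Omega_C$ with $C$ depending only on $N$ and $\sup|\rho|$. Your ergodic-decomposition idea and the contradiction argument for low-exponent components are in the right direction, but they must be combined with this open $N$-step set + Pliss upgrade to obtain uniformity.

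A secondary remark: your treatment of the angle condition via the fibered product $\hat M\times_M\hat M$ and a continuity-set argument is plausible but more elaborate than necessary. Once the four norm conditions defining $\Omega_{C,\chi}$ hold, the angle bound $\measuredangle(E^+_x,E^-_x)\ge C'^{-1}$ follows automatically from continuity of the derivative (an expanding and a contracting unit vector at the same point cannot be too close — this is Lemma~\ref{angle uniformly bounded from below} in the paper), so one can avoid the joint lift entirely.
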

In particular, we obtain the following result for MMEs, which is our primary focus.
\begin{Theorem}\label{Thm:limit measure carries large exponent everywhere}
	Let $X$ be a $C^r(r>1)$ non-singular vector field over a three-dimensional compact manifold $M$ with $h_{\rm top}(X)>\frac{1}{r}\min\{\lambda^{+}(X), \lambda^{-}(X)\}$. Assume that $\mu_n$ is a sequence of $\varphi^{1}$-ergodic MMEs with $\mu_{n}\to\mu$ for some $\varphi^{1}$-invariant measure $\mu$. Write $\chi:=\frac{h_{\rm top}(X)}{3}$. Then for any $\delta>0$, there are $C>0$ and $N\in\NN$ such that for any $n>N$,
	$$\mu_n(\cR_{C,\chi})>1-\delta.$$
\end{Theorem}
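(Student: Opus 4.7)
The plan is to derive this statement as a short corollary of the general Theorem \ref{General result of Thm:limit measure carries large exponent everywhere}, so the task reduces to verifying its two hypotheses for the limit measure $\mu$, namely $\mu({\rm NUH}_{2\chi})=1$ and $\hat\mu_n^\pm \to \hat\mu^\pm$.

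The convergence of the stable and unstable lifts is delivered directly by Theorem \ref{continuity of exponents for flow}. Each $\mu_n$ is $\varphi^1$-ergodic with $h(X,\mu_n)=h_{\rm top}(X)$, so trivially $h(X,\mu_n)\to h_{\rm top}(X)$, and the standing hypothesis $h_{\rm top}(X)>\frac{1}{r}\min\{\lambda^{+}(X),\lambda^{-}(X)\}$ is exactly the one required; Theorem \ref{continuity of exponents for flow} then guarantees both that $\mu$ is itself a MME and that $\hat\mu_n^{+}\to\hat\mu^{+}$ and $\hat\mu_n^{-}\to\hat\mu^{-}$.

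For the first hypothesis, I would use Ruelle's inequality on the ergodic decomposition of $\mu$. Since $\mu$ is an invariant MME we have $h(X,\mu)=h_{\rm top}(X)=3\chi$, and by the Jacobs theorem (the entropy of an invariant measure equals the integral of the entropies of its ergodic components) combined with the upper bound $h(X,m)\leq h_{\rm top}(X)$ on every ergodic component $m$, it follows that for $\mu$-a.e.~ergodic component one actually has $h(X,m)=3\chi>0$. Since $X$ is non-singular and $M$ is three-dimensional, such an $m$ is hyperbolic with exactly one positive, one zero (flow direction) and one negative Lyapunov exponent, so Ruelle's inequality applied to $m$ and to $m$ under the reversed flow yields $\lambda^{+}(X,m)\geq h(X,m)=3\chi>2\chi$ and $\lambda^{-}(X,m)\leq -3\chi<-2\chi$. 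Hence $m({\rm NUH}_{2\chi})=1$ for $\mu$-a.e.~component $m$, and integrating gives $\mu({\rm NUH}_{2\chi})=1$. With both hypotheses verified, Theorem \ref{General result of Thm:limit measure carries large exponent everywhere} produces the required $C$ and $N$.

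I do not expect any real obstacle here: the content is packaged into the two preceding results, and this theorem is merely the specialization to the MME setting. The only point requiring care is the Jacobs-theorem argument that the ergodic components of an invariant MME are themselves MMEs $\mu$-almost surely, which is standard but has to be stated explicitly to access Ruelle's inequality through a single exponent bound.
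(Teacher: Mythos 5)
Your proposal is correct and is essentially the proof given in the paper: the paper likewise deduces the statement from Theorem \ref{continuity of exponents for flow} (to get that $\mu$ is a MME and that $\hat\mu_n^{\pm}\to\hat\mu^{\pm}$) together with ergodic decomposition, the harmonic property of entropy (your Jacobs-theorem step), and Ruelle's inequality for the flow and its time-reversal to conclude $\mu({\rm NUH}_{2\chi})=1$, then invokes Theorem \ref{General result of Thm:limit measure carries large exponent everywhere}. The only difference is that you spell out more explicitly the step that $\mu$-almost every ergodic component is itself a MME, which the paper states more tersely.
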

\begin{proof}
	By Theorem \ref{continuity of exponents for flow}, $\mu$ is a MME. By the ergodic decomposition and the harmonic property of entropy, every ergodic component of $\mu$ is also a MME. Hence, by Ruelle's inequality, $\mu$-almost every point has one positive exponent which is strictly larger than $2\chi$, one zero exponent which is on the flow direction and one negative exponent which is strictly less than $-2\chi$. This implies $\mu({\rm NUH}_{2\chi})=1$ . Moreover $\hat{\mu}^{+}_{n}, \hat{\mu}^{-}_{n}$ converge respectively to $\hat{\mu}^{+}, \hat{\mu}^{-}$. By Theorem \ref{General result of Thm:limit measure carries large exponent everywhere} above, we get the conclusion.
\end{proof}

Next we prove Theorem \ref{General result of Thm:limit measure carries large exponent everywhere}. The following result can be viewed as a general version of Theorem \ref{General result of Thm:limit measure carries large exponent everywhere} above for the lifted dynamics.

\begin{Lemma}\label{Lem: Pliss-Like Lemma1}
	Let $f$ be a homeomorphism on a compact metric space $X$ and let $\omega: X\to \mathbb{R}$ be a continuous function. Let $\mu$ be an invariant measure such that for $\mu$-almost every point $x$,  $$\lim_{n\to+\infty}\frac{1}{n}\sum_{i=0}^{n-1}\omega(f^{i}(x))<-2\chi<0.$$	
	Then for any $\delta>0$, there are $C>0$ and a neighborhood $\mathcal{U}$ of $\mu$ such that for any invariant measure $\nu\in \mathcal{U}$, we have 
	$$\nu(\Omega_{C})>1-\delta$$ where $$\Omega_{C}:=\left\{x:~\forall n\in\NN, \sum_{i=0}^{n-1}\omega(f^{i}(x))\leq -\chi \cdot n+C\right\}.$$
\end{Lemma}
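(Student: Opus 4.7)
The plan is to establish the bound for $\mu$ itself via Egoroff's theorem, and then extend to $\nu$ in a weak$^*$ neighborhood of $\mu$ using the ergodic decomposition and the weak$^*$ continuity of the observable $\int \omega \, d(\cdot)$.

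For $\mu$, I would apply Egoroff to the pointwise convergence $\frac{1}{n}\sum_{i=0}^{n-1}\omega(f^i x)$ to its limit, which is strictly less than $-2\chi$ for $\mu$-a.e.\ $x$: there exist $N_0 \in \mathbb{N}$ and a measurable set $Y \subset X$ with $\mu(Y) > 1-\delta/4$ such that $\sum_{i=0}^{n-1}\omega(f^i x) \leq -\tfrac{3\chi}{2}n$ for all $x \in Y$ and $n \geq N_0$. Combining this with the trivial bound $|\sum_{i=0}^{n-1}\omega(f^i x)| \leq n\|\omega\|_\infty$ for $n < N_0$, one checks that $Y \subset \Omega_{C_0}$ for $C_0 = N_0(\|\omega\|_\infty + \chi)$; hence $\mu(\Omega_{C_0}) > 1-\delta/4$.

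To transfer this to nearby $\nu$, I would pass to the ergodic decomposition $\mu = \int m \, d\rho(m)$. By hypothesis, for $\rho$-a.e.\ ergodic $m$ one has $\int \omega \, dm < -2\chi$, so the same Egoroff argument applied to $m$ shows $m(\Omega_C) \to 1$ as $C \to \infty$. A further application of Egoroff on the probability space $(\mathcal{M}_e, \rho)$ yields $C^* > 0$ such that $\rho(\{m : m(\Omega_{C^*}) > 1 - \delta/4\}) > 1 - \delta/4$. The goal is then to show that, for $\nu$ in a sufficiently small weak$^*$ neighborhood of $\mu$, the measure $\rho_\nu$ on $\mathcal{M}_e$ concentrates similarly on the set $\{m : m(\Omega_{C^*}) \geq t\}$ for some $t$ slightly below $1 - \delta/4$, giving $\nu(\Omega_{C^*}) = \int m(\Omega_{C^*}) \, d\rho_\nu(m) > t(1-\delta/2) > 1 - \delta$.

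The hard part will be this last transfer, since the map $\nu \mapsto \rho_\nu$ is generally not weak$^*$ continuous. The approach I would take is to use that $m \mapsto m(\Omega_{C^*})$ is upper semi-continuous on $\mathcal{M}_e$ (because $\Omega_{C^*}$ is closed in $X$), and that for generic $t$ the level set $\{m : m(\Omega_{C^*}) \geq t\}$ has $\rho$-null boundary, so a Portmanteau-type argument applied to a rich algebra of continuous functionals of the form $m \mapsto \int \phi \, dm$ on $\mathcal{M}_e$ should yield the needed concentration. Alternatively, one can bypass the ergodic decomposition entirely by working with the closed truncations $\Omega_{C^*}^{(N)} = \bigcap_{n=0}^N \{S_n + \chi n \leq C^*\}$: pick $C^*$ outside the countable locus where some $\{S_n + \chi n = C^*\}$ has positive $\mu$-mass, so that $\nu_k(\Omega_{C^*}^{(N)}) \to \mu(\Omega_{C^*}^{(N)})$ for each $N$, and upgrade this pointwise-in-$N$ convergence to a uniform bound using invariance of $\nu$ together with a Pliss-type estimate on the first forward Pliss time $T(y) = \inf\{n \geq 0 : f^n y \in \Omega_0\}$ (via the pointwise bound $\tilde\phi(y) \leq T(y)(\|\omega\|_\infty + \chi)$).
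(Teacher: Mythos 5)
Your proposal correctly establishes $\mu(\Omega_{C_0}) > 1-\delta/4$ via Egoroff, but the transfer to nearby $\nu$ has a genuine gap, and you sense it yourself ("the hard part"). The difficulty is not merely a technical annoyance: the set $\Omega_C$ is \emph{closed}, so $\nu \mapsto \nu(\Omega_C)$ is upper semi-continuous, which is the wrong direction --- it only gives $\limsup_k \nu_k(\Omega_C) \le \mu(\Omega_C)$, and thus cannot prevent the measure from collapsing for $\nu$ near $\mu$. Your first route (ergodic decomposition of $\mu$, then Egoroff on $(\mathcal{M}_e,\rho)$) further relies on controlling $\rho_\nu$ from $\rho_\mu$, but $\nu \mapsto \rho_\nu$ is not weak$^*$ continuous, and the proposed Portmanteau argument on the space of ergodic measures again runs against the wrong semi-continuity (the superlevel set $\{m : m(\Omega_{C^*}) \ge t\}$ is closed). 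Your alternative route (finite truncations $\Omega_{C}^{(N)}$ with a $\mu$-null boundary choice of $C$) does yield $\nu_k(\Omega_C^{(N)}) \to \mu(\Omega_C^{(N)})$ for each \emph{fixed} $N$, but that convergence is not uniform in $N$, and since $\Omega_C = \bigcap_N \Omega_C^{(N)}$ with $\nu(\Omega_C^{(N)})$ decreasing in $N$, the pointwise-in-$N$ control does not bound $\nu(\Omega_C)$. The "upgrade to uniformity via invariance and a Pliss-type estimate" is exactly where the missing idea sits.

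The paper resolves this cleanly by transferring an \emph{open} finitary condition rather than the closed Pliss condition. Fix one $N$ and set $\Lambda_N := \{x : \sum_{i=0}^{N-1}\omega(f^i x) < -\tfrac{3}{2}\chi N\}$, which is open; Birkhoff gives $\mu(\Lambda_N)$ close to $1$, and openness (lower semi-continuity of $\nu\mapsto\nu(\Lambda_N)$) forces $\nu(\Lambda_N)$ to remain close to $1$ for every invariant $\nu$ in a weak$^*$ neighborhood $\mathcal{U}$ of $\mu$. One then decomposes $\nu$ ergodically; most ergodic components $m$ of $\nu$ also assign mass close to $1$ to $\Lambda_N$, and the abstract Pliss lemma (Lemma~\ref{Lem:abstract-ergodic}, applied to $f^N$) converts this into a high asymptotic density of visits of $m$-typical orbits to $\Lambda_N$ along the $f^N$-grid. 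A short computation (splitting $[0,n)$ into blocks of length $N$ and separating the visiting blocks from the rare non-visiting ones, bounded crudely by $\|\omega\|_\infty$) then shows these points lie in $\Omega_C$ for a $C$ depending only on $N$ and $\|\omega\|_\infty$. Integrating over the ergodic components finishes. The key move you are missing is to apply the Pliss-type argument to the ergodic components of the \emph{unknown} $\nu$, after transferring only a single open Birkhoff-window condition from $\mu$; working with $\Omega_C$ or its closed truncations directly, or with the ergodic decomposition of $\mu$ itself, cannot give the needed lower bound.
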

\begin{proof}
	It is sufficient to prove the the result for all sufficiently small $\delta$.
	By Birkhoff ergodic theorem, given any small number $\delta$ with $$\delta< \frac{2\chi}{3}/(|\max_{x}\omega(x)|+1) \quad\text{and}\quad -\frac{3}{2}\cdot (1-\frac{\delta}{2})<-\frac{4}{3},$$ there is some $N$ large enough such that $\mu(\Lambda_{N})> 1-\frac{\delta^{4}}{4}$ where  the open subset $\Lambda_{N}$ is defined by $$\Lambda_{N}:=\left\{x:~ \sum_{i=0}^{N-1}\omega(f^{i}(x))<-\frac{3}{2}\chi \cdot N\right\}.$$
	Let $\mathcal{U}$ be a neighborhood of $\mu$ such that for any $\nu\in \mathcal{U}$, $\nu(\Lambda_{N})>1-\frac{\delta^{4}}{4}$ (since $\Lambda_{N}$ is an open set). Given any $\nu\in \mathcal{U}$, we consider the ergodic decomposition of $\nu$ and let $\widetilde{\nu}$ be the corresponding probability measure associated with $\nu$ on the space of ergodic measures. We note that there is a collection $\mathcal{V}$ of the ergodic components of $\nu$ with $\widetilde{\nu}(\mathcal{V})>1-\delta^{2}$ such that for each component $m\in \mathcal{V}$, $$m(\Lambda_{N})>1-\frac{\delta^{2}}{4}.$$ Because otherwise, we would have $$\nu(\Lambda_{N})=\int m(\Lambda_{N})\,d\,\widetilde{\nu}(m)\leq 1-\delta^{2}\cdot\frac{\delta^{2}}{4}=1-\frac{\delta^{4}}{4}$$ which is a contradiction. 
	
	Given any ergodic component $m\in\mathcal{V}$, since $m(\Lambda_{N})>1-\frac{\delta^{2}}{4}$, by Lemma \ref{Lem:abstract-ergodic} (w.r.t. $f^{N}$), we have $$m(\widetilde{\Lambda}_{N})\ge 1-\frac{\delta}{2}$$ where $$\widetilde{\Lambda}_{N}:=\left\{x: ~\forall \ell\ge1,~\frac{1}{\ell}\#\left\{k:~0\le k<\ell,~f^{kN}(x)\in \Lambda_{N}\right\}\ge 1-\frac{\delta}{2}\right\}.$$ Given $n\in\mathbb{N}$, let $l$ be the integer such that $n-1=l\cdot N+r$ where $0\leq r<N$. Given any $x\in \widetilde{\Lambda}_{N}$, let $\mathbb{L}$ denote the integers in $[0,l-1]$ such that for any $k\in\mathbb{L}$, $f^{kN}(x)\in\Lambda_{N}$. Write $\mathbb{L}^{c}:=[0,l-1]\setminus \mathbb{L}$. By the result above, we have $\# \mathbb{L}\geq l(1-\frac{\delta}{2})$ and $\# \mathbb{L}^{c}\leq l\cdot\frac{\delta}{2}$. Let $C=2N\cdot(|\max_{x}\omega(x)|+1)$. We have 
	$$\begin{aligned}
		\sum_{i=0}^{n-1}\omega(f^{i}(x))
		&\leq\sum_{i=0}^{lN-1}\omega(f^{i}(x))+\frac{C}{2}\\
		&=\sum_{k\in \mathbb{L}}\sum_{i=0}^{N-1}\omega(f^{kN+i}(x))+\sum_{k\in\mathbb{L}^{c}}\sum_{i=0}^{N-1}\omega(f^{kN+i}(x))+\frac{C}{2}\\
		&\leq-\frac{3}{2}\chi N\cdot l(1-\frac{\delta}{2})+N\cdot(|\max_{x}\omega(x)|+1)\cdot l\cdot\frac{\delta}{2}+\frac{C}{2}\\
		&\leq-\frac{4}{3}\chi\cdot lN+\frac{1}{3}\chi\cdot lN+\frac{C}{2}\\
		&= -\chi \cdot lN+\frac{C}{2}\\
		&\leq -\chi \cdot n+C.
	\end{aligned}$$	
	Hence $\widetilde{\Lambda}_{N}\subset \Omega_{C}$. Then, we have $$m(\Omega_{C})\geq m(\widetilde{\Lambda}_{N})\geq 1-\frac{\delta}{2}.$$ Consequently, $$\nu(\Omega_{C})=\int m(\Omega_{N})\,d\,\widetilde{\nu}(m)\geq (1-\delta^{2})\cdot(1-\frac{\delta}{2})\geq 1+\frac{\delta^{3}}{2}-\delta^{2}-\frac{\delta}{2}>1-\delta.$$

\end{proof}

The following lemma can be proven using a parallel approach.

\begin{Lemma}\label{Lem: Pliss-Like Lemma2}
	Let $f$ be a homeomorphism on a compact metric space $X$ and let $\omega: X\to \mathbb{R}$ be a continuous function. Let $\mu$ be an invariant measure such that for $\mu$-almost every point $x$,  $$\lim_{n\to+\infty}\frac{1}{n}\sum_{i=0}^{n-1}\omega(f^{i}(x))>2\chi>0.$$	
	Then for any $\delta>0$, there are $C>0$ and a neighborhood $\mathcal{U}$ of $\mu$ such that for any invariant measure $\nu\in \mathcal{U}$, we have 
	$$\nu(\Omega_{C})>1-\delta$$ where $$\Omega_{C}:=\left\{x:\,~\forall n\in\NN, \sum_{i=0}^{n-1}\omega(f^{i}(x))\geq \chi \cdot n-C\right\}.$$
\end{Lemma}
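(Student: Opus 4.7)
The plan is to reduce the statement directly to Lemma \ref{Lem: Pliss-Like Lemma1} by applying that lemma to the function $-\omega$. Under the substitution $\omega \leftrightarrow -\omega$, the hypothesis $\lim_{n\to+\infty}\tfrac{1}{n}\sum_{i=0}^{n-1}\omega(f^{i}(x)) > 2\chi$ becomes $\lim_{n\to+\infty}\tfrac{1}{n}\sum_{i=0}^{n-1}(-\omega)(f^{i}(x)) < -2\chi$, which is exactly the hypothesis of Lemma \ref{Lem: Pliss-Like Lemma1} with the same constant $\chi$. Its conclusion then furnishes a neighborhood $\mathcal{U}$ of $\mu$ and a constant $C > 0$ such that every invariant $\nu \in \mathcal{U}$ satisfies $\nu\bigl(\{x : \forall n\in\NN,\ \sum_{i=0}^{n-1}(-\omega)(f^i(x)) \leq -\chi n + C\}\bigr) > 1 - \delta$. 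Rearranging the inequality inside the braces as $\sum_{i=0}^{n-1}\omega(f^i(x)) \geq \chi n - C$ yields exactly the conclusion of the present lemma, with the same $C$ and $\mathcal{U}$.

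If one prefers a self-contained parallel argument, the structure of the proof of Lemma \ref{Lem: Pliss-Like Lemma1} transfers verbatim with obvious sign changes. The key substitution is to replace the open sub-level set used there by the open super-level set $\Lambda_{N}:=\{x : \sum_{i=0}^{N-1}\omega(f^i(x)) > \tfrac{3}{2}\chi N\}$, which by Birkhoff's ergodic theorem has $\mu(\Lambda_N) > 1 - \delta^4/4$ for $N$ large, and retains this measure bound on a weak-$\ast$ neighborhood $\mathcal{U}$ of $\mu$ because $\Lambda_N$ is open. Ergodic decomposition isolates a set $\mathcal{V}$ of components with $\widetilde{\nu}(\mathcal{V}) > 1 - \delta^2$, each satisfying $m(\Lambda_N) > 1 - \delta^2/4$; applying Lemma \ref{Lem:abstract-ergodic} to $f^N$ produces, for each such component, a subset $\widetilde{\Lambda}_N$ of measure at least $1 - \delta/2$ whose points visit $\Lambda_N$ under $f^N$ with lower density $\geq 1-\delta/2$. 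Choosing $C := 2N(|\max_x \omega(x)| + 1)$ and splitting $\sum_{i=0}^{n-1}\omega(f^i(x))$ for $x \in \widetilde{\Lambda}_N$ into "good" blocks of length $N$ (each contributing at least $\tfrac{3}{2}\chi N$) and "bad" blocks (contributing at least $-N(|\max_x \omega|+1)$), the same arithmetic as in Lemma \ref{Lem: Pliss-Like Lemma1} gives $\sum_{i=0}^{n-1}\omega(f^i(x)) \geq \chi n - C$ for all $n \in \NN$, so $\widetilde{\Lambda}_N \subset \Omega_C$. Integrating over $\mathcal{V}$ then produces $\nu(\Omega_C) > 1 - \delta$.

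There is no genuine obstacle here: the statement is the mirror image of Lemma \ref{Lem: Pliss-Like Lemma1}, and both routes --- the $-\omega$ reduction and the direct parallel argument --- yield identical constants. The only care required is to preserve the smallness conditions on $\delta$ (namely $\delta < \tfrac{2\chi}{3}/(|\max_x \omega|+1)$ and $-\tfrac{3}{2}(1-\delta/2) < -\tfrac{4}{3}$) and the choice of $N$, both of which are unaffected by the sign flip.
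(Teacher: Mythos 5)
Your proof is correct and takes essentially the paper's route: the paper gives no details and merely states that the lemma ``can be proven using a parallel approach,'' which is exactly your second argument, while your reduction via applying Lemma \ref{Lem: Pliss-Like Lemma1} to $-\omega$ is a cleaner formalization of the same mirror symmetry that requires no re-derivation at all. One small imprecision: after the sign flip the relevant bound involves $|\min_x\omega|$ rather than $|\max_x\omega|$, so the two routes do not produce literally identical constants unless one reads the paper's $|\max_x\omega(x)|$ as $\|\omega\|_\infty$; this is immaterial since the lemma only asserts existence of some $C>0$.
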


We now utilize Lemma \ref{Lem: Pliss-Like Lemma1} and Lemma \ref{Lem: Pliss-Like Lemma2} to establish Theorem \ref{General result of Thm:limit measure carries large exponent everywhere}.
\begin{proof}[Proof of Theorem \ref{General result of Thm:limit measure carries large exponent everywhere}]
	Given $C>1$, we define $\Omega_{C,\chi}\subset {\rm NUH}_{\chi}$ to be the set of $x$ such that for any $ n\in\NN$, 	
	\begin{itemize}
		\item  $\|D\varphi^{-n}_{x}|_{E^{+}}\| \leq C{\rm e}^{-\chi n},\quad \|D\varphi^{n}_{x}|_{E^{+}}\| \geq C^{-1}{\rm e}^{\chi n}$,
		\item  $\|D\varphi^{n}_{x}|_{E^{-}}\| \leq C{\rm e}^{-\chi n},\quad \|D\varphi^{-n}_{x}|_{E^{-}}\| \geq C^{-1}{\rm e}^{\chi n}$.
	\end{itemize}
Here, note that $\Omega_{C,\chi} $ is defined as the subset of ${\rm NUH}_{\chi}$ obtained by omitting the angle condition in $\cR_{C,\chi}$. We aim to show that the measure of $\Omega_{C,\chi}$ can be made arbitrarily close to one. Specifically, we state the following claim:
\begin{Claim}
	For any $\delta>0$ small, there are $\widetilde{C}>1$ and $N\in\NN$ such that for any $n>N$, $$\mu_{n}(\Omega_{\widetilde{C},\chi})> 1-\delta.$$	
\end{Claim}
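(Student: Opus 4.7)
The plan is to lift the problem to the projective tangent bundle $\hat M$ and apply Lemmas~\ref{Lem: Pliss-Like Lemma1} and~\ref{Lem: Pliss-Like Lemma2} once for each of the four inequalities that define $\Omega_{C,\chi}$. I work with two continuous functions on $\hat M$,
$$\rho^+(x,E):=\log\|D\varphi^{1}_x|_E\|,\qquad \rho^-(x,E):=\log\|D\varphi^{-1}_x|_E\|.$$
Since $E$ is one-dimensional, a direct cocycle check yields
$$\sum_{i=0}^{n-1}\rho^+(\hat\varphi^{i}(x,E))=\log\|D\varphi^{n}_x|_E\|,\qquad \sum_{i=0}^{n-1}\rho^-(\hat\varphi^{-i}(x,E))=\log\|D\varphi^{-n}_x|_E\|,$$
so that the four defining inequalities of $\Omega_{C,\chi}$ are precisely uniform lower/upper estimates for these Birkhoff sums along forward or backward orbits of $\hat\varphi^{\pm 1}$ on $\hat M$.

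Because $\mu(\mathrm{NUH}_{2\chi})=1$, at $\hat\mu^+$-generic points the Birkhoff average of $\rho^+$ under $\hat\varphi^{1}$ equals $\lambda^+(\varphi,\mu)>2\chi$, and the average of $\rho^-$ under $\hat\varphi^{-1}$ equals $-\lambda^+(\varphi,\mu)<-2\chi$; symmetrically at $\hat\mu^-$-generic points the corresponding averages are $\lambda^-(\varphi,\mu)<-2\chi$ and $-\lambda^-(\varphi,\mu)>2\chi$. I then apply Lemma~\ref{Lem: Pliss-Like Lemma2} to the pairs $(\hat\varphi^{1},\rho^+,\hat\mu^+)$ and $(\hat\varphi^{-1},\rho^-,\hat\mu^-)$ to obtain the two lower-bound estimates (conditions~2 and~4 of $\Omega_{C,\chi}$), and Lemma~\ref{Lem: Pliss-Like Lemma1} to $(\hat\varphi^{-1},\rho^-,\hat\mu^+)$ and $(\hat\varphi^{1},\rho^+,\hat\mu^-)$ to obtain the two upper-bound estimates (conditions~1 and~3). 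Each application produces a constant $C_i>0$ and a weak-$*$ neighborhood $\mathcal U_i$ of $\hat\mu^+$ or $\hat\mu^-$ such that every invariant measure $\hat\nu\in\mathcal U_i$ gives mass at least $1-\delta/4$ to the associated good subset of $\hat M$.

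Finally I push down to $M$. By the hypothesis $\hat\mu_n^{\pm}\to\hat\mu^{\pm}$, for all sufficiently large $n$ we have $\hat\mu_n^+\in\mathcal U_1\cap\mathcal U_2$ and $\hat\mu_n^-\in\mathcal U_3\cap\mathcal U_4$. Using that $\hat\mu_n^{\pm}$ is the push-forward of $\mu_n$ under $x\mapsto(x,E^{\pm}_x)$, each good subset of $\hat M$ corresponds to a $\mu_n$-set of measure at least $1-\delta/4$ on which the associated inequality for $\|D\varphi^{\pm n}_x|_{E^{\pm}}\|$ holds for every $n\in\NN$. Taking $\widetilde C:=\exp\max\{C_1,C_2,C_3,C_4\}$ and applying a union bound over the four complements, we conclude $\mu_n(\Omega_{\widetilde C,\chi})>1-\delta$; note that the membership $x\in\mathrm{NUH}_\chi$ follows automatically from the four uniform inequalities together with Lyapunov regularity of $\mu_n$-a.e.\ point. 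The only delicate part of the argument is the sign-and-direction bookkeeping, i.e.\ matching each of the four inequalities defining $\Omega_{C,\chi}$ to the correct combination of $\rho^{\pm}$, $\hat\varphi^{\pm 1}$, $\hat\mu^{\pm}$, and Pliss-type lemma; once that is fixed the rest of the proof is essentially a four-fold union bound.
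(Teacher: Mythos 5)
Your proof is correct and follows essentially the same route as the paper: lift to $\hat M$, observe that the four defining inequalities of $\Omega_{C,\chi}$ are uniform Birkhoff-sum estimates for $\rho^{\pm}$ under $\hat\varphi^{\pm 1}$, apply the Pliss-type Lemmas~\ref{Lem: Pliss-Like Lemma1} and~\ref{Lem: Pliss-Like Lemma2} to $\hat\mu^{\pm}$, and finish with a union bound using $\hat\mu_n^{\pm}\to\hat\mu^{\pm}$. The paper only writes out the two $E^+$ conditions with $\hat\mu^+$ (noting $E^-$ is symmetric), whereas you do all four explicitly; the only small imprecision is calling the $\hat\mu^+$-a.e.\ Birkhoff average of $\rho^+$ "$\lambda^+(\varphi,\mu)$" when $\mu$ need not be ergodic, but since $\mu(\mathrm{NUH}_{2\chi})=1$ the pointwise limit is still $>2\chi$ a.e., which is all the lemma needs.
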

	
\begin{proof}	
		Note that the definition of $\Omega_{C,\chi}$ involves four conditions. It suffices to show that each condition can be satisfied by sets whose measure is arbitrarily close to one. For simplicity, we focus on the conditions related to $E^{+}$, as those for $E^{-}$ can be handled in a similar manner. To be precise, we define	$$\Lambda_{\widetilde{C},\chi}:=\left\{x\in {\rm NUH}_{\chi}:~\forall n\in\NN, \|D\varphi^{-n}_{x}|_{E^{+}}\| \leq \widetilde{C}{\rm e}^{-\chi n}\right\},$$ $$\Delta_{\widetilde{C},\chi}:=\left\{x\in {\rm NUH}_{\chi}:~\forall n\in\NN, \|D\varphi^{n}_{x}|_{E^{+}}\| \geq \widetilde{C}^{-1}{\rm e}^{\chi n}\right\}.$$ We will prove that for any small $\delta>0$, there are $\widetilde{C}>0$ and $N\in\NN$ such that for any $n>N$, $$\mu_{n}(\Lambda_{\widetilde{C},\chi})> 1-\delta, \quad\mu_{n}(\Delta_{\widetilde{C},\chi})> 1-\delta.$$

		Let $\hat{\Lambda}_{\widetilde{C},\chi}, \hat{\Delta}_{\widetilde{C},\chi}$ denote the corresponding unstable lifted sets in $\hat{M}$, i.e., $$\hat{\Lambda}_{\widetilde{C},\chi}:=\left\{(x,E^{+}_{x}):~x\in \Lambda_{\widetilde{C},\chi}\right\},$$ $$\hat{\Delta}_{\widetilde{C},\chi}:=\left\{(x,E^{+}_{x}):~x\in \Delta_{\widetilde{C},\chi}\right\}.$$ Note that  it suffices to prove that $$\hat{\mu}_n^+(\hat\Lambda_{\widetilde{C},\chi})>1-\delta,\quad \hat{\mu}_n^+(\hat\Delta_{\widetilde{C},\chi})>1-\delta$$ for some $\widetilde{C}$ where $\hat{\mu}_n^+$ denotes the unstable lift of $\mu_{n}$. Recall that $\hat{\mu}^{+}$ is the unstable lift of $\mu$ which has full measure on the points $\{(x,E^{+}_{x})\}$. Define the continuous function on $\hat{M}$ by $$\omega_{1}(x,E):=\log \|D\varphi^{-1}_{x}|_{E}\|,\quad \omega_{2}(x,E):=\log \|D\varphi^{1}_{x}|_{E}\|.$$ Recall that $\hat{\varphi}$ is the lifted flow of $\varphi$ on $\hat{M}$ defined by $$\hat{\varphi}^{t}(x, E):=\left(\varphi^{t}(x), D\varphi^{t}_{x}(E)\right).$$ Note that for $\hat{\mu}^{+}$-almost every point $(x,E^{+}_{x})$,  $$\lim_{n\to+\infty}\frac{1}{n}\sum_{i=0}^{n-1}\omega_{1}(\hat{\varphi}^{-n}(x,E^{+}_{x}))=\lim_{n\to+\infty}\frac{1}{n}\log\|D\varphi^{-n}_{x}|_{E^{+}}\|<-2\chi<0,$$	$$\lim_{n\to+\infty}\frac{1}{n}\sum_{i=0}^{n-1}\omega_{2}(\hat{\varphi}^{n}(x,E^{+}_{x}))=\lim_{n\to+\infty}\frac{1}{n}\log\|D\varphi^{n}_{x}|_{E^{+}}\|>2\chi>0$$	
		and $$\hat{\Lambda}_{\widetilde{C},\chi}=\left\{\left(x,E^{+}_{x}\right):~\forall n\in\NN, \sum_{i=0}^{n-1}\omega_{1}(\hat{\varphi}^{-i}(x,E^{+}_{x}))\leq -\chi \cdot n+\log\widetilde{C}\right\},$$ $$\hat{\Delta}_{\widetilde{C},\chi}=\left\{\left(x,E^{+}_{x}\right):\,~\forall n\in\NN, \sum_{i=0}^{n-1}\omega_{2}(\hat{\varphi}^{i}(x,E^{+}_{x}))\geq \chi \cdot n-\log\widetilde{C}\right\}.$$

		Applying Lemma \ref{Lem: Pliss-Like Lemma1} and Lemma \ref{Lem: Pliss-Like Lemma2} to $(\hat{M},\hat{\varphi}^{-1},\omega_{1}, \hat{\mu}^{+})$ and $(\hat{M},\hat{\varphi}^{1},\omega_{2}, \hat{\mu}^{+})$ respectively, there is a constant $\widetilde{C}>0$ such that for all sufficiently large $n\in\NN$, $$\hat{\mu}_n^+(\hat{\Lambda}_{\widetilde{C},\chi})>1-\delta,\quad \hat{\mu}_n^+(\hat{\Delta}_{\widetilde{C},\chi})>1-\delta.$$
		Hence, 	$$\mu_{n}(\Lambda_{\widetilde{C},\chi})=\hat{\mu}_n^+(\hat{\Lambda}_{\widetilde{C},\chi})>1-\delta,\quad \mu_{n}(\Delta_{\widetilde{C},\chi})=\hat{\mu}_n^+(\hat{\Delta}_{\widetilde{C},\chi})>1-\delta.$$	
		
		The proof of the claim is complete.
\end{proof}		
Next we show $\mu_{n}(\cR_{C,\chi})> 1-\delta$ for some large $C>0$. To see this, by the continuity of the derivative $D\varphi^{1}$, there must be a constant $C>\widetilde{C}$ independent of $x$ such that for any $x\in \Omega_{\widetilde{C},\chi}$, $$\measuredangle(E^{+}_{x}, E^{-}_{x})\geq C^{-1}.$$ Otherwise, the four conditions in $\Omega_{\widetilde{C},\chi}$ cannot be all satisfied (see Lemma \ref{angle uniformly bounded from below} in Appendix). Hence, we have $$\Omega_{\widetilde{C},\chi}\subset \cR_{C,\chi}.$$ Therefore, for sufficiently large $n$, $$\mu_{n}(\cR_{C,\chi})\geq\mu_{n}(\Omega_{\widetilde{C},\chi})> 1-\delta.$$

%
%
%

\end{proof}
\subsection{Proof of Theorem~\ref{Thm:finiteness C r}}
Recall for two sub-manifolds $D_{1}$ and $D_{2}$, $D_{1}\pitchfork D_{2}$ denotes the set of $x\in D_{1}\cap D_{2}$ such that $$T_{x}M=T_{x}D_{1}+T_{x}D_{2}.$$ Two ergodic hyperbolic measures $\mu,\nu$ are called \emph{homoclinically related} if there are two sets $A, B$ with $\mu(A), \nu(B)>0$ such that for any $x\in A, y\in B$, $$W^s(x)\pitchfork W^u(y)\neq\emptyset,\quad W^u(x)\pitchfork W^s(y)\neq\emptyset.$$ By ergodicity, it is equivalent to state that the above property holds for $\mu$-a.e. $x$ and $\nu$-a.e. $y$.

To prove the finiteness of MMEs, we use the following result of Buzzi-Crovisier-Lima \cite[Corollary 1.2]{BCL23}: 

There is \emph{at most one} ergodic hyperbolic measure $\mu$ which is homoclinically related $\nu$ such that $$h(X,\mu)=\max\{h(X,\nu'):~  \nu' \text{is homoclinically related } \nu\}.$$

\begin{proof}[Proof of Theorem~\ref{Thm:finiteness C r}]
%

We prove the theorem by contradiction. Assume that $X$ has infinitely many ergodic distinct MMEs: $\widetilde{\mu}_1, \widetilde{\mu}_2,\cdots, \widetilde{\mu}_n,\cdots$. Usually, each $\mu_{n}$ is not ergodic but merely invariant under the time-one map $\varphi^{1}$. However, by Lemma \ref{ergodic decomposition of flow}, we can find  infinitely many $\varphi^{1}$-ergodic distinct MMEs: $\mu_1,\mu_2,\cdots,\mu_n,\cdots$ such that for each $n$, $$\widetilde{\mu}_{n}=\int_{0}^{1}\varphi^{t}_{*}\left(\mu_{n}\right)dt.$$ Up to a sub-sequence, we may assume that $\lim_{n\to\infty}\mu_n=\mu$ for some $\varphi^{1}$-invariant measure $\mu$.

%
%
%
%
%
%
%
%
%

	\begin{itemize}
		\item By Theorem \ref{continuity of exponents for flow}, $\mu$ is also a MME. By the ergodic decomposition and the harmonic property of entropy, (almost) every ergodic component of $\mu$ is also a MME. As a consequence of  Ruelle's inequality, $\mu({\rm NUH}_{2\chi})=1$ for $\chi:=\frac{h_{\rm top}(X)}{3}$.
		\item By Theorem \ref{Pro:compare-two-sets}, given small $\varepsilon,\alpha>0$, we can find a small $\delta>0$ such that for any $C>1$, if $\mu_n(\cR_{C,\chi})>1-\delta$, then $\mu_{n}(\cR_{C,\chi, \varepsilon})>1-\alpha$.
		\item By Theorem~\ref{Thm:limit measure carries large exponent everywhere}, for such $\delta$, we can choose $C>1$ such that $\mu_n(\cR_{C,\chi})>1-\delta$ for all $n$ large enough. We observe that, due to the continuity of the derivative of the flow, there exists a constant $\widetilde{C} \geq C$ that depends only on the flow, ensuring that  $$ \cR_{C,\chi} \subset \varphi^{-t} \left(\cR_{\widetilde{C},\chi}\right), \quad \forall t \in [0,1] .$$ To illustrate this, we consider the first inequality in the definition of $\cR_{C,\chi}$ as a representative example, since the remaining inequalities can be handled similarly.
		 For any $x\in\cR_{C,\chi}$, by definition, for any $n\in\mathbb{N}$, $$\|D\varphi^{-n}_{x}|_{E^{+}}\|\leq C{\rm e}^{-\chi n}.$$ Hence, 
			\begin{equation*}
			\begin{aligned}
			\|D\varphi^{-n}_{\varphi^{t}(x)}|_{E^{+}}\|
				&=\|\left(D\varphi^{t}_{\varphi^{-n}(x)}\circ D\varphi^{-n}_{x}\circ D\varphi^{-t}_{\varphi^{t}(x)}\right)|_{E^{+}}\|\\
				&\leq \text{CONST}\cdot\|D\varphi^{-n}_{x}|_{E^{+}}\|\\
				&\leq \widetilde{C}\cdot{\rm e}^{-\chi n}
			\end{aligned}
		\end{equation*}
for some $\widetilde{C}\geq C$ large enough. This shows $\varphi^{t}(x)\in\cR_{\widetilde{C},\chi}$ which gives $\cR_{C,\chi}\subset \varphi^{-t}\left(\cR_{\widetilde{C},\chi}\right).$ We then have $$\varphi^{t}_{*}\left(\mu_{n}\right)\left(\cR_{\widetilde{C},\chi}\right)\geq  \mu_n(\cR_{C,\chi})>1-\delta,\quad \forall t\in[0,1]$$ which leads to $$\widetilde{\mu}_n(\cR_{\widetilde{C},\chi})>1-\delta.$$
By Theorem \ref{Pro:compare-two-sets}, we finally have, for all large $n$, $$\widetilde{\mu}_n(\cR_{\widetilde{C},\chi,\varepsilon})>1-\alpha.$$
	\end{itemize}

We cover $\cR_{\widetilde{C},\chi,\varepsilon}$ with finitely many balls ${B(x,\frac{\tau}{2})}$, where $\tau$ is the constant provided by Theorem \ref{Thm:Pesin}. By the finiteness of the cover, there exists at least one ball, say $B(x,\frac{\tau}{2})$, that carries positive measure for some distinct measures $\widetilde{\mu}_m$ and $\widetilde{\mu}_n$. This implies the existence of two sets $A, B \subset \cR_{\widetilde{C},\chi,\varepsilon}$ with $\widetilde{\mu}_m(A), \widetilde{\mu}_n(B) > 0$ such that the distance between $A$ and $B$ is less than $\tau$. By Theorem \ref{Thm:Pesin}, it follows that $\widetilde{\mu}_m$ and $\widetilde{\mu}_n$ are homoclinically related. Since both $\widetilde{\mu}_m$ and $\widetilde{\mu}_n$ are ergodic hyperbolic measures of maximal entropy, it follows from \cite[Corollary 1.2]{BCL23} (see the statement above) that $\widetilde{\mu}_m = \widetilde{\mu}_n$, leading to a contradiction.
\end{proof}

\section{Continuity of the Lyapunov exponents: work of Burguet revisited}\label{Burguet's work}
In this section, we will review several results from Burguet's work \cite{Bur24} and then prove Theorem \ref{invariant Cr case}.

\subsection{Entropy splitting into geometric component and neutral component}
Given a subset $L\in\mathbb{N}, \hat{H}\subset\hat{M}$ and a point $\hat{x}\in\hat{M}$, we define
$$E^{L}(\hat{x}):=\bigcup [a,b)$$ where the union runs over all $a,b\in\mathbb{Z}$ with $\hat{F}^{a}(\hat{x}), \hat{F}^{b}(\hat{x})\in\hat{H}$ and $0<b-a\leq L$. We can think of \( k \in E^{L}(\hat{x}) \) (or more commonly, $k$ with $\hat{F}^{k}(\hat{x})\in \hat{H}$) as a \emph{hyperbolic time} of \( \hat{x} \), even though, in this abstract case, the hyperbolicity in the usual sense is not directly involved. By convention, if $L=\infty$, we define $E^{L}(\hat{x}):=\mathbb{Z}$. Let $$E^{L}_{n}(\hat{x}):=E^{L}(\hat{x})\cap [0,n).$$ Generated by these $E^{L}_{n}(\hat{x})$, we let $\mathcal{E}_{n}^{L}$ be the finite partition with atoms $$O_{E}:=\{\pi(\hat{x}):~\hat{x}\in\hat{M}, E^{L}_{n}(\hat{x})=E\}$$ for $E\subset [0,n)$. Given a finite partition $\mathcal{Q}$ on $M$ and a subset $E\subset\mathbb{N}$, we define the partition $$\mathcal{Q}^{E}:=\bigvee_{i\in E}f^{-i}\mathcal{Q}$$ and the finer partition (than $\mathcal{E}_{n}^{L}$) $$\mathcal{Q}^{\mathcal{E}_{n}^{L}}:=\bigcup_{O_{E}\in\mathcal{E}_{n}^{L}} \mathcal{Q}^{E}|_{O_{E}}$$ where $\mathcal{Q}|_{A}$ denotes the restriction on the subset $A$, i.e., $\mathcal{Q}|_{A}=:\{A\cap B:~ B\in \mathcal{Q}\}$. 

Given an invariant measure $\mu$, recall that we use $W^{u}(x)$ to denote the usual Pesin unstable manifold at $x\in\mathcal{R}$ (recall that $\mathcal{R}$ denotes the Lyapunov regular set). We say a measurable partition $\xi$ is \emph{subordinate to $W^{u}$} if for $\mu$-a.e. $x$, $\xi(x)\subset W^{u}(x)$ and  $\xi(x)$ contains an open neighborhood of $x$ w.r.t. the intrinsic topology on $W^{u}(x)$. We refer to \cite{LeY85} for the existence of subordinate partitions. Given an ergodic measure $\mu$, let $\{\mu_{x}\}$ be the conditional measures of $\mu$ w.r.t. a given measurable partition subordinate to $W^{u}$ (refer to \cite{Roh52} for backgrounds on measurable partitions and associated systems of conditional measures). 

Let $B_{n}(x,\delta)$ denote the usual $(n,\delta)$ dynamical ball at $x$, i.e., $$B_{n}(x,\delta):=\left\{y\in M:~ d(f^{i}(x), f^{i}(y))<\delta,\,i=0,1,\cdots n-1\right\}.$$ Write $$H(t):=t\log \frac 1 t+(1-t)\log\frac 1{1-t},\,t\in(0,1).$$

\begin{Proposition}\label{entropy splitting}
	Let $f$ be a $C^{r}(r>1)$ diffeomorphism on a compact manifold $M$ and let $\mu$ be an ergodic measure with exactly one positive Lyapunov exponent. Let $\{\mu_{x}\}$ be a family of conditional measures of $\mu$ on the local unstable manifolds (w.r.t. some subordinate partition). Given any small number $\varepsilon>0$, let $\delta>0$ be small enough such that $\mu(\Lambda)>\frac{1}{2}$ where $$\Lambda:=\left\{x\in \mathcal{R}:~h(f,\mu)-\varepsilon\leq\liminf_{n\to+\infty}-\frac{1}{n}\log\mu_{x}\left(B_{n}\left(x,\delta\right)\right)\right\}.$$
	
	Then for any large $L\in\mathbb{N}\cup\{\infty\}$, any subset $\hat{H}\subset\hat{M}$ (which generates $\mathcal{E}_{n}^{L}$) and any two finite partitions $\mathcal{P}, \mathcal{Q}$ on $M$ with $\diam(\mathcal{P})\leq\delta$, we have with $\mu_{x,\Lambda}:=\frac{\mu_{x}(\Lambda\cap\cdot)}{\mu_{x}(\Lambda)}$ $$h(f,\mu)-\varepsilon\leq\limsup_{n\to+\infty}\frac{1}{n}H_{\mu_{x,\Lambda}}\left(\mathcal{Q}^{\mathcal{E}_{n}^{L}}\bigg|\mathcal{E}_{n}^{L}\right)+ \limsup_{n\to+\infty}\frac{1}{n}H_{\mu_{x,\Lambda}}\left(\mathcal{P}^{[0,n)}\bigg|\mathcal{Q}^{\mathcal{E}_{n}^{L}}\right)+H(\frac{2}{L}),\quad \mu \text{ - a.e. } x\in\Lambda.$$
	
\end{Proposition}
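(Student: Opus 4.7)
The plan is to convert the Brin--Katok-type hypothesis in the definition of $\Lambda$ into an integrated entropy lower bound for $\mathcal{P}^{[0,n)}$, then expand that entropy via the chain rule into the three desired pieces, and finally bound the cardinality of $\mathcal{E}_{n}^{L}$ by a standard binomial/Stirling count that produces the $H(2/L)$ term.

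First I would exploit the hypothesis $\diam(\mathcal{P}) \leq \delta$, which forces $\mathcal{P}^{[0,n)}(y) \subset B_{n}(y,\delta)$ for every $y$. Because the conditional measures of a subordinate partition are intrinsic to each $\xi$-atom, $\mu_{y} = \mu_{x}$ whenever $y \in \xi(x)$, so for $\mu_{x,\Lambda}$-a.e.\ $y \in \Lambda$ one obtains the pointwise inequality
\[
-\log \mu_{x,\Lambda}\bigl(\mathcal{P}^{[0,n)}(y)\bigr) \;\geq\; -\log \mu_{y}\bigl(B_{n}(y,\delta)\bigr) + \log \mu_{x}(\Lambda).
\]
Dividing by $n$, taking $\liminf$, and invoking the defining property of $\Lambda$ applied at the point $y$, the right-hand side is bounded below by $h(f,\mu) - \varepsilon$. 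Since $-\log \mu_{x,\Lambda}(\mathcal{P}^{[0,n)}(y)) \geq 0$, Fatou's lemma applied to $\int \tfrac{1}{n}(-\log \mu_{x,\Lambda}(\mathcal{P}^{[0,n)}(y)))\,d\mu_{x,\Lambda}(y) = \tfrac{1}{n}H_{\mu_{x,\Lambda}}(\mathcal{P}^{[0,n)})$ yields
\[
\liminf_{n\to+\infty} \tfrac{1}{n}H_{\mu_{x,\Lambda}}(\mathcal{P}^{[0,n)}) \;\geq\; h(f,\mu)-\varepsilon.
\]

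Next I would apply the chain rule. Because $\mathcal{Q}^{\mathcal{E}_{n}^{L}}$ refines $\mathcal{E}_{n}^{L}$,
\[
H_{\mu_{x,\Lambda}}(\mathcal{P}^{[0,n)}) \;\leq\; H_{\mu_{x,\Lambda}}(\mathcal{E}_{n}^{L}) + H_{\mu_{x,\Lambda}}(\mathcal{Q}^{\mathcal{E}_{n}^{L}} \mid \mathcal{E}_{n}^{L}) + H_{\mu_{x,\Lambda}}(\mathcal{P}^{[0,n)} \mid \mathcal{Q}^{\mathcal{E}_{n}^{L}}).
\]
Dividing by $n$ and combining with the previous $\liminf$ bound, and using $\liminf(a_n+b_n+c_n) \leq \limsup a_n + \limsup b_n + \limsup c_n$, it remains only to show $\limsup_{n} \tfrac{1}{n}H_{\mu_{x,\Lambda}}(\mathcal{E}_{n}^{L}) \leq H(2/L)$, which I would obtain from $H_{\mu_{x,\Lambda}}(\mathcal{E}_{n}^{L}) \leq \log \#\mathcal{E}_{n}^{L}$ together with a cardinality estimate.

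For the cardinality estimate, each atom of $\mathcal{E}_{n}^{L}$ corresponds to a subset $E = E^{L}(\hat{x})\cap [0,n)$. By definition of $E^{L}(\hat{x})$, the complement $[0,n)\setminus E$ is a union of intervals arising from gaps between consecutive hyperbolic times exceeding $L$; hence $E$ is completely determined by its "boundary markers" inside $[0,n)$, namely the last hyperbolic time before, and the first hyperbolic time after, each such gap. Since every such gap has length $>L$, the total number of markers is at most $2n/L + O(1)$, so
\[
\#\mathcal{E}_{n}^{L} \;\leq\; \binom{n}{\lfloor 2n/L \rfloor} \cdot (\text{sub-exponential}),
\]
and Stirling's formula gives $\tfrac{1}{n}\log\binom{n}{\lfloor 2n/L\rfloor} \to H(2/L)$. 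When $L = \infty$ the convention $E^{L}(\hat{x}) = \mathbb{Z}$ makes $\mathcal{E}_{n}^{L}$ trivial and $H(2/L) = H(0) = 0$, matching the formula.

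The main obstacle I anticipate is the combinatorial step: one must verify carefully that the set of "boundary markers" determining $E$ really lies in $[0,n)$ (accounting for hyperbolic times slightly outside this window, the possibility that $0$ or $n-1$ is not itself a hyperbolic time, and the bookkeeping at the endpoints), so that the resulting count genuinely has density $\leq 2/L$ and yields $H(2/L)$ in the limit. A secondary subtle point is the identification $\mu_{y} = \mu_{x}$ used in the first step, which relies on the standard but easily overlooked fact that conditionals of a measurable partition are constant on each atom up to a $\mu$-null set; one must verify that this null set does not interact with $\Lambda$ in a problematic way.
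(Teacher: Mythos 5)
Your proposal is correct and follows essentially the same route as the paper: the Brin--Katok hypothesis defining $\Lambda$ is pushed through Fatou's lemma (using $\mathcal{P}^{[0,n)}(y)\subset B_n(y,\delta)$ and the conditional measure identity $\mu_y=\mu_x$ on $\xi(x)$) to get $\liminf_n\tfrac1n H_{\mu_{x,\Lambda}}(\mathcal{P}^{[0,n)})\geq h(f,\mu)-\varepsilon$, followed by the identical chain-rule split into $H(\mathcal{E}_n^L)+H(\mathcal{Q}^{\mathcal{E}_n^L}\mid\mathcal{E}_n^L)+H(\mathcal{P}^{[0,n)}\mid\mathcal{Q}^{\mathcal{E}_n^L})$ and the same binomial/Stirling bound $\tfrac1n\log\#\mathcal{E}_n^L\to H(2/L)$. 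The paper writes the cardinality bound as $\sum_{m\leq\lfloor 2n/L\rfloor+1}\binom{n}{m}$ rather than your $\binom{n}{\lfloor 2n/L\rfloor}\cdot(\text{sub-exp.})$, but these are interchangeable; the anticipated combinatorial subtleties you flag are real but minor, and the paper handles them exactly as you sketch.
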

\begin{Remark}
	By Ledrappier-Young entropy theory \cite{LeY85}, for any $\varepsilon > 0$, the set $\Lambda$ above can have $\mu$-measure arbitrarily close to one, provided that $\delta$ is chosen sufficiently small.
	
	Note that we impose no restrictions on the partition $\mathcal{Q}$, unlike $\mathcal{P}$, which is associated with the reference measure. In applying Proposition \ref{entropy splitting} to a sequence of ergodic measures, we aim to determine the entropy of the limiting measure. To achieve this, the partition $\mathcal{Q}$ must be fixed in advance and remain independent of the measures under consideration (see the proof of Proposition \ref{invariant cr case with error term}).
	
\end{Remark}
\begin{proof}
	For $\mu$-a.e. $x\in\Lambda$, we may assume $\mu_{x}(\Lambda)>0$ so that $\mu_{x,\Lambda}:=\frac{\mu_{x}(\Lambda\cap\cdot)}{\mu_{x}(\Lambda)}$ is a well defined probability measure. Given a finite partition $\mathcal{P}$ on $M$ with diameter less than $\delta$, we have 
	\begin{equation*}
		\begin{aligned}
			\liminf_{n\to+\infty}\frac{1}{n}H_{\mu_{x,\Lambda}}(\mathcal{P}^{[0,n)})
			&=\liminf_{n\to+\infty}\frac{1}{n}\int-\log \mu_{x,\Lambda}\left(\mathcal{P}^{[0,n)}\left(y\right)\right)d\mu_{x,\Lambda}(y)\\
			&\geq\int\liminf_{n\to+\infty}-\frac{1}{n}\log \mu_{x,\Lambda}\left(\mathcal{P}^{[0,n)}\left(y\right)\right)d\mu_{x,\Lambda}(y)\\
			&= \int\liminf_{n\to+\infty}-\frac{1}{n}\log \mu_{x}\left(\mathcal{P}^{[0,n)}\left(y\right)\right)d\mu_{x,\Lambda}(y)\\
			&= \int\liminf_{n\to+\infty}-\frac{1}{n}\log \mu_{y}\left(\mathcal{P}^{[0,n)}\left(y\right)\right)d\mu_{x,\Lambda}(y)\\
			&\geq \int\liminf_{n\to+\infty}-\frac{1}{n}\log \mu_{y}\left(B_{n}\left(y,\delta\right)\right)d\mu_{x,\Lambda}(y)\\
			&\geq h(f,\mu)-\varepsilon.
		\end{aligned}
	\end{equation*}
	
	By definition, since the complement of \( E^{L}_{n}(\hat{x}) \) is the disjoint union of intervals with length larger than \( L \), the number of possible types of \( E^{L}_{n}(\hat{x}) \) is at most \( \sum_{m=1}^{\lfloor 2n/L \rfloor + 1} \binom{n}{m} \), which implies that the cardinality of \( \mathcal{E}_{n}^{L} \) is also bounded by this sum. Recall the elementary property of conditional entropy (e.g., Theorem 4.3 in the book \cite{Wal82}): $$H_{\mu}\left(\xi\vee\eta|\zeta\right)=H_{\mu}\left(\xi|\zeta\right)+H_{\mu}\left(\eta|\xi\vee\zeta\right)\Longrightarrow H_{\mu}\left(\eta|\zeta\right)\leq H_{\mu}\left(\xi|\zeta\right)+H_{\mu}\left(\eta|\xi\right).$$ Given another finite partition $\mathcal{Q}$, we have
	
	\begin{equation*}
		\begin{aligned}
			H_{\mu_{x,\Lambda}}(\mathcal{P}^{[0,n)})
			&\leq H_{\mu_{x,\Lambda}}\left(\mathcal{P}^{[0,n)}\big|\mathcal{E}_{n}^{L}\right)+H_{\mu_{x,\Lambda}}(\mathcal{E}_{n}^{L})\\
			&\leq H_{\mu_{x,\Lambda}}\left(\mathcal{P}^{[0,n)}\big|\mathcal{E}_{n}^{L}\right)+\log\left(\sum_{m=1}^{\lfloor2n/L\rfloor+1}{n\choose m}\right)\\
			&\leq H_{\mu_{x,\Lambda}}\left(\mathcal{Q}^{\mathcal{E}_{n}^{L}}\bigg|\mathcal{E}_{n}^{L}\right)+ H_{\mu_{x,\Lambda}}\left(\mathcal{P}^{[0,n)}\bigg|\mathcal{Q}^{\mathcal{E}_{n}^{L}}\right)+\log\left(\sum_{m=1}^{\lfloor2n/L\rfloor+1}{n\choose m}\right).\\
		\end{aligned}
	\end{equation*}
	By Stirling's formula, we then have $$\liminf_{n\to+\infty}\frac{1}{n}H_{\mu_{x,\Lambda}}(\mathcal{P}^{[0,n)})\leq \limsup_{n\to+\infty}\frac{1}{n}H_{\mu_{x,\Lambda}}\left(\mathcal{Q}^{\mathcal{E}_{n}^{L}}\bigg|\mathcal{E}_{n}^{L}\right)+ \limsup_{n\to+\infty}\frac{1}{n}H_{\mu_{x,\Lambda}}\left(\mathcal{P}^{[0,n)}\bigg|\mathcal{Q}^{\mathcal{E}_{n}^{L}}\right)+H(\frac{2}{L})$$ which completes the proof.

\end{proof}

\subsection{Entropy estimation of geometric component}
Given a Borel probability measure $\mu$ and a finite subset $E\subset\mathbb{N}$, we write $$\mu^{E}:=\frac{1}{\# E}\sum_{i\in E}f_{*}^{i}\mu.$$

This  following result from \cite{Bur24} establishes a bound on the entropy of a partition over the set $E$ by relating it to the entropy of a measure averaged over the $E$-iterates of $f$ (i.e., $\mu^{E}$ above). Importantly, this result does not require the measure $\mu$ to be invariant, making it widely useful for entropy estimations in non-invariant settings.

\begin{Lemma}[{\cite[Lemma 8]{Bur24}}]\label{technical lemma}
	Let $(X,f)$ be a topological dynamical system and let $\mu$ be a Borel probability measure (not necessarily invariant). Given a finite subset $E\subset\mathbb{N}$ and a finite partition $\mathcal{Q}$, for any $m\in\mathbb{N}$, $$\frac{1}{\# E}H_{\mu}(\mathcal{Q}^{E})\leq \frac{1}{m}H_{\mu^{E}}(\mathcal{Q}^{m})+6m\frac{\#\left(\left(E+1\right)\Delta E\right)}{\# E}\log\#\mathcal{Q}$$ where $A\Delta B:=\left(A\setminus B\right)\cup \left(B\setminus A\right)$ and $E+1:=\{i+1:~i\in E\}$.
\end{Lemma}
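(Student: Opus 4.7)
The plan is a shift-averaging argument. Decompose $E$ into its $L$ maximal integer intervals $E=\bigsqcup_{\ell=1}^{L}[a_\ell,b_\ell)$, so that $\#((E+1)\Delta E)=2L$. For each shift $k\in\{0,1,\ldots,m-1\}$, tile $\ZZ$ into length-$m$ blocks $I^k_j:=[jm+k,(j+1)m+k)$ and set
\begin{equation*}
J_k:=\{j:I^k_j\cap E\neq\emptyset\},\quad J_k^{\mathrm{in}}:=\{j\in J_k:I^k_j\subset E\},\quad J_k^{\partial}:=J_k\setminus J_k^{\mathrm{in}}.
\end{equation*}
A given maximal interval of $E$ can straddle the left endpoint of some $I^k_j$ or its right endpoint for at most one value of $j$, so each interval contributes at most two blocks to $J_k^{\partial}$, giving $\#J_k^{\partial}\leq 2L$ uniformly in $k$.

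The crucial observation is that $E\subset\bigcup_{j\in J_k}I^k_j$, hence the partition $\bigvee_{j\in J_k}f^{-(jm+k)}\mathcal{Q}^m=\bigvee_{l\in\bigcup_j I^k_j}f^{-l}\mathcal{Q}$ refines $\mathcal{Q}^E$. Therefore, for each $k$, by subadditivity of entropy over a join of partitions (which does not require invariance of $\mu$),
\begin{equation*}
H_\mu(\mathcal{Q}^E)\leq \sum_{j\in J_k}H_{f_*^{jm+k}\mu}(\mathcal{Q}^m)\leq \sum_{j\in J_k^{\mathrm{in}}}H_{f_*^{jm+k}\mu}(\mathcal{Q}^m)+2Lm\log\#\mathcal{Q},
\end{equation*}
using the trivial bound $H_{f_*^{jm+k}\mu}(\mathcal{Q}^m)\leq m\log\#\mathcal{Q}$ on the boundary blocks. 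Averaging over the $m$ shifts yields
\begin{equation*}
m\,H_\mu(\mathcal{Q}^E)\leq \sum_{k=0}^{m-1}\sum_{j\in J_k^{\mathrm{in}}}H_{f_*^{jm+k}\mu}(\mathcal{Q}^m)+2Lm^2\log\#\mathcal{Q}.
\end{equation*}

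Each interior pair $(j,k)$ corresponds bijectively to a unique integer $l=jm+k$ lying in $E^{\circ}:=\{l\in E:[l,l+m)\subset E\}\subset E$, so the double sum on the right equals $\sum_{l\in E^{\circ}}H_{f_*^l\mu}(\mathcal{Q}^m)$. Since $\mu$ is only Borel, not invariant, the natural tool is concavity of entropy in the measure, which gives $H_{\mu^E}(\mathcal{Q}^m)\geq \frac{1}{\#E}\sum_{l\in E}H_{f_*^l\mu}(\mathcal{Q}^m)$. Combining,
\begin{equation*}
\sum_{l\in E^{\circ}}H_{f_*^l\mu}(\mathcal{Q}^m)\leq \sum_{l\in E}H_{f_*^l\mu}(\mathcal{Q}^m)\leq \#E\cdot H_{\mu^E}(\mathcal{Q}^m),
\end{equation*}
and dividing by $m\cdot\#E$, using $2L=\#((E+1)\Delta E)$, gives the stated inequality with constant $1$ in place of $6$; the larger constant in the statement is harmless slack.

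The argument is conceptually clean once the shift-averaging is set up. The main subtlety is the bookkeeping: one must verify that each maximal interval of $E$ contributes at most two boundary blocks in every shift $k$, and that the bijection between interior pairs $(j,k)$ and indices $l\in E^{\circ}$ is exact so that the main sum can be absorbed into $\#E\cdot H_{\mu^E}(\mathcal{Q}^m)$. A secondary but essential point is that $\mu$ is not assumed $f$-invariant, so standard subadditivity-in-time of entropy is unavailable and must be replaced by concavity of entropy in the measure; this substitution is exactly what enables the lemma to be applied to conditional measures on unstable manifolds in the proof of Proposition \ref{entropy splitting}.
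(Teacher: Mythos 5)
Your proof is correct. Note first that the paper does not reprove this lemma---it cites it verbatim from Burguet \cite[Lemma 8]{Bur24}---so there is no in-paper argument to compare against; you are supplying the missing proof rather than reconstructing one, and what you give is sound.

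The verification hinges on three points, all of which check out. First, the combinatorics: writing $E$ as a disjoint union of $L$ maximal integer intervals $[a_\ell,b_\ell)$, one indeed has $(E+1)\setminus E=\{b_\ell\}_\ell$ and $E\setminus(E+1)=\{a_\ell\}_\ell$, so $\#((E+1)\Delta E)=2L$. Second, the count of boundary blocks: every block $I^k_j$ that meets $E$ but is not contained in $E$ must contain some $a_\ell$ or some $b_\ell$ as an interior transition point, and since the blocks at fixed shift $k$ partition $\mathbb{Z}$, each of the $2L$ endpoints lies in exactly one block, giving $\#J^{\partial}_k\le 2L$ uniformly in $k$. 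Your phrasing (``each interval contributes at most two boundary blocks'') is slightly loose---it would be cleaner to argue via the transition-point observation---but the bound is right. Third, the entropy manipulations: the refinement $\mathcal{Q}^E\preceq\bigvee_{j\in J_k}f^{-(jm+k)}\mathcal{Q}^m$, subadditivity of $H_\mu$ over a join, the identity $H_\mu(f^{-n}\mathcal{P})=H_{f^n_*\mu}(\mathcal{P})$, and concavity of $\nu\mapsto H_\nu(\mathcal{Q}^m)$ all hold for an arbitrary (not necessarily invariant) Borel probability measure, which is exactly the generality needed. The bijection between interior pairs $(j,k)$ and integers $l=jm+k\in E^\circ$ is the division-with-remainder decomposition and is exact. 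You end up with constant $1$ in place of $6$, which is a harmless strengthening; the extra slack in Burguet's version likely stems from a slightly coarser bookkeeping in the original.

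One small presentational remark: you should briefly justify why a boundary block must contain an endpoint $a_\ell$ or $b_\ell$ (rather than merely ``straddle'' it), since a block could a priori meet several intervals of $E$; the cleanest way is to note that a block $I^k_j\in J^\partial_k$ contains both a point of $E$ and a point of its complement, hence contains a consecutive transition, hence contains some $a_\ell$ or $b_\ell$.
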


Consider an ergodic measure $\mu$ with exactly one positive Lyapunov exponent ($\hat{\mu}^{+}$ denotes its ergodic unstable lift). Let $\chi^{L}$ be the indicator function of the subset $\left\{\hat{x}\in \hat{M}:~0\in E^{L}\left(\hat{x}\right)\right\}$ and let $$\beta^{L}:=\hat{\mu}^{+}\left(\left\{\hat{x}\in \hat{M}:~0\in E^{L}\left(\hat{x}\right)\right\}\right)=\int\chi^{L}d\hat{\mu}^{+}.$$

\begin{Proposition}\label{entropy estimation of geometric component}
	Let $f$ be a $C^{r}(r>1)$ diffeomorphism on a compact manifold $M$ and let $\mu$ be an ergodic measure with exactly one positive Lyapunov exponent. Let $\{\mu_{x}\}$ be a family of conditional measures of $\mu$ on the local unstable manifolds (w.r.t. some subordinate partition). Consider a subset $\hat{H}\subset\hat{M}$ (which generates $\mathcal{E}_{n}^{L}$). Let  $\Lambda\subset \mathcal{R}$ be a subset with $\mu(\Lambda)>\frac{1}{2}$ such that for any $L\in\mathbb{N}\cup\{\infty\}$, we have with $\hat{x}:=(x, E^{+}_{x})$,
	\begin{equation}\label{uniform convergence}
		\Big(\frac{1}{n}\sum_{i\in E^{L}_{n}(\hat{x})}\delta_{\hat{f}^{i}(\hat{x})}\Big)\xrightarrow{\text{weak $\ast$}} \chi^{L}\cdot\hat{\mu}^{+},\quad \text{uniformly in $x\in\Lambda$}.
	\end{equation}
	
	Then for any $m\in\mathbb{N}$ and any finite partition $\mathcal{Q}$ on $M$ with $\mu(\partial\mathcal{Q}^{m})=0$, $$\limsup_{n\to+\infty}\frac{1}{n}H_{\mu_{x,\Lambda}}\left(\mathcal{Q}^{\mathcal{E}_{n}^{L}}\bigg|\mathcal{E}_{n}^{L}\right)\leq \beta^{L}\frac{1}{m}H_{(\beta^{L})^{-1}\chi^{L}\hat{\mu}^{+}}(\pi^{-1}\mathcal{Q}^{m})+\frac{12m\log\#\mathcal{Q}}{L}\quad \mu \text{ - a.e. } x\in\Lambda$$ where $(\beta^{L})^{-1}\chi^{L}\hat{\mu}^{+}$ is the normalized measure (i.e., probability measure) of $\chi^{L}\hat{\mu}^{+}$.
\end{Proposition}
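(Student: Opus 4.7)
The plan is to decompose the conditional entropy into contributions from atoms of $\mathcal{E}_n^L$, apply Lemma~\ref{technical lemma} on each atom to pass from entropies over $E$ to entropies of averaged measures, then combine by concavity and pass to the weak limit using the uniform convergence hypothesis~\eqref{uniform convergence}. This mirrors Burguet's scheme transposed to the projective bundle dynamics.

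For a typical $x\in\Lambda$, the first step will be to write
$$H_{\mu_{x,\Lambda}}\!\left(\mathcal{Q}^{\mathcal{E}_n^L}\,\Big|\,\mathcal{E}_n^L\right) \;=\; \sum_{E\subset[0,n)} \mu_{x,\Lambda}(O_E)\, H_{\nu_E}(\mathcal{Q}^E),$$
with $\nu_E:=\mu_{x,\Lambda}(\,\cdot\,|\,O_E)$, where the sum runs over atoms of positive mass. Applying Lemma~\ref{technical lemma} to each Borel probability $\nu_E$ (no invariance is needed) gives
$$\frac{1}{\#E}H_{\nu_E}(\mathcal{Q}^E)\le\frac{1}{m}H_{\nu_E^E}(\mathcal{Q}^m)+\frac{6m\,\#((E+1)\Delta E)}{\#E}\log\#\mathcal{Q}.$$
For $E=E_n^L(\hat{y})$, the set $E^L$ is a disjoint union of maximal intervals, each separated from the next by a gap of length exceeding $L$ in the $\hat{H}$-visit sequence, so the number of maximal intervals is at most $n/L+1$ and hence $\#((E+1)\Delta E)\le 2(n/L+1)$. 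Weighting by $\mu_{x,\Lambda}(O_E)\cdot\#E/n$ and summing in $E$, the boundary error contributes at most $12m\log\#\mathcal{Q}/L + O(1/n)$ to the desired quantity.

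The geometric main term is handled via concavity. Set $\tau_n:=\sum_E\mu_{x,\Lambda}(O_E)\#E/n$ and $\eta_n:=\tau_n^{-1}\sum_E \mu_{x,\Lambda}(O_E)(\#E/n)\,\nu_E^E$. Concavity of $\nu\mapsto H_\nu(\mathcal{Q}^m)$ yields
$$\sum_E \mu_{x,\Lambda}(O_E)\frac{\#E}{n}H_{\nu_E^E}(\mathcal{Q}^m)\le \tau_n H_{\eta_n}(\mathcal{Q}^m),$$
and a direct computation, using $\mu_{x,\Lambda}(O_E)\nu_E=\mu_{x,\Lambda}|_{O_E}$ together with the fact that $y\in O_E$ iff $E_n^L(\hat{y})=E$, identifies
$$\tau_n\eta_n\;=\;\frac{1}{n}\int\sum_{i\in E_n^L(\hat{y})}\delta_{f^i(y)}\,d\mu_{x,\Lambda}(y).$$
Projecting~\eqref{uniform convergence} by $\pi$ and integrating against $\mu_{x,\Lambda}$ (which is supported in $\Lambda$) then yields $\tau_n\eta_n\to(\chi^L\hat{\mu}^+)\circ\pi^{-1}$ weakly, and $\tau_n\to\beta^L$.

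To finish, the hypothesis $\mu(\partial\mathcal{Q}^m)=0$ forces $\hat{\mu}^+(\pi^{-1}\partial\mathcal{Q}^m)=0$, so the limit measure charges no atom boundary of $\mathcal{Q}^m$; continuity of $\nu\mapsto H_\nu(\mathcal{Q}^m)$ at this limit, together with the identity $H_{\pi_\ast\rho}(\mathcal{Q}^m)=H_\rho(\pi^{-1}\mathcal{Q}^m)$, yields the target bound after taking $\limsup_{n\to\infty}$. The crux — and the main obstacle — lies in the identification of $\tau_n\eta_n$ as an integral of single-orbit empirical measures together with the need to pass beneath the integral to the weak limit: the uniformity on $\Lambda$ encoded in~\eqref{uniform convergence}, rather than a mere $\mu$-a.e.\ convergence, is precisely what makes this transfer legitimate.
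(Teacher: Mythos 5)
Your proof is correct and follows essentially the same scheme as the paper: decompose the conditional entropy over atoms of $\mathcal{E}_n^L$, apply Lemma~\ref{technical lemma} atom by atom, control the boundary error by $\#((E+1)\Delta E)\lesssim 2n/L$, and pass to the limit using the uniform convergence hypothesis~\eqref{uniform convergence} together with $\mu(\partial\mathcal{Q}^m)=0$. The only genuine variation is cosmetic but arguably cleaner: you invoke concavity of $\nu\mapsto H_\nu(\mathcal{Q}^m)$ to merge the weighted family $\{\nu_E^E\}_E$ into a single sequence $\eta_n$ and then pass to the limit once, whereas the paper lifts each conditional measure to $\hat M$ and lets each $(\hat{\mu}^+_{x,\Lambda\cap O_E})^E$ converge termwise to $(\beta^L)^{-1}\chi^L\hat\mu^+$, implicitly using the uniformity of that convergence across atoms before invoking continuity of $H$ at the common limit; the two are interchangeable once the uniform convergence is in hand.
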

\begin{Remark}\label{remark of entropy estimation of geometric component}
	Note that by definition, $$i\in E^{L}\left(\hat{x}\right)\Longleftrightarrow\hat{f}^{i}(\hat{x})\in \left\{\hat{x}\in \hat{M}:~0\in E^{L}\left(\hat{x}\right)\right\}\Longleftrightarrow \chi^{L}(\hat{f}^{i}(\hat{x}))=1.$$ Hence the ergodicity of $\hat{\mu}^{+}$ implies that for $\hat{\mu}^{+}$-a.e. $\hat{x}$, $$\Big(\frac{1}{n}\sum_{i\in E^{L}_{n}(\hat{x})}\delta_{\hat{f}^{i}(\hat{x})}\Big)\xrightarrow{\text{weak $\ast$}} \chi^{L}\cdot\hat{\mu}^{+},\quad n\to+\infty.$$ Therefore it is reasonable to assume the set $\Lambda$ above has measure larger than $\frac{1}{2}$.
\end{Remark}
\begin{proof}
	Note for $\mu$-a.e. $x\in\Lambda$, we may assume $\mu_{x}(\Lambda)>0$ so that $\mu_{x,\Lambda}:=\frac{\mu_{x}(\Lambda\cap\cdot)}{\mu_{x}(\Lambda)}$ is a well defined probability measure. Given a subset $O_{E}\in \mathcal{E}_{n}^{L}$, recall that $$\mu_{x,\Lambda\cap O_{E}}:=\frac{\mu_{x,\Lambda}( O_{E}\cap\cdot)}{\mu_{x,\Lambda}(O_{E})}=\frac{\mu_{x}(\Lambda\cap O_{E}\cap\cdot)}{\mu_{x}(\Lambda\cap O_{E})}$$ whenever $\mu_{x,\Lambda}(O_{E})>0$. 
	We define the unstable lift $\hat{\mu}^{+}_{x,\Lambda\cap O_{E}}$ of $\mu_{x,\Lambda\cap O_{E}}$, i.e., $$\hat{\mu}^{+}_{x,\Lambda\cap O_{E}}:=\int_{\hat{M}}\delta_{(y, E^{+}_{y})} d\left(\mu_{x,\Lambda\cap O_{E}}\right)\left(y\right).$$

	Note by definition (write $\hat{y}:=(y, E^{+}_{y})$), $$\left(\hat{\mu}_{x,\Lambda\cap O_{E}}^{+}\right)^{E}=\frac{n}{\# E}\int\Big(\frac{1}{n}\sum_{i\in E^{L}_{n}(\hat{y})=E}\delta_{\hat{f}^{i}(\hat{y})}\Big)d\left(\mu_{x,\Lambda\cap O_{E}}\right)\left(y\right).$$ By Formula (\ref{uniform convergence}), $$\frac{E^{L}_{n}(\hat{x})}{n}\rightarrow \beta^{L}\quad\text{uniformly in $x\in\Lambda$}$$ and 
	$$\left(\hat{\mu}_{x,\Lambda\cap O_{E}}^{+}\right)^{E}\xrightarrow{\text{weak $\ast$}} (\beta^{L})^{-1}\chi^{L}\cdot\hat{\mu}^{+}\quad\text{uniformly in $x\in\Lambda$}.$$
	Together with the assumption $\hat{\mu}^{+}\left(\partial\left(\pi^{-1}\mathcal{Q}^{m}\right)\right)=0$ (since $\mu(\partial\mathcal{Q}^{m})=0$), by Lemma \ref{technical lemma}, we have 
	\begin{equation*}
		\begin{aligned}
			\frac{1}{n}H_{\mu_{x,\Lambda}}\left(\mathcal{Q}^{\mathcal{E}_{n}^{L}}\bigg|\mathcal{E}_{n}^{L}\right)
			&=\frac{1}{n}\sum_{E\subset [0,n)}\mu_{x,\Lambda}\left(O_{E}\right) \cdot H_{\mu_{x,\Lambda\cap O_{E}}}\left(\mathcal{Q}^{E}\right)\quad\text{(by definition)}\\
			&\leq \sum_{E\subset [0,n)}\mu_{x,\Lambda}\left(O_{E}\right)\cdot\frac{\# E}{n}\cdot\left(\frac{1}{m}H_{\left(\mu_{x,\Lambda\cap O_{E}}\right)^{E}}\left(\mathcal{Q}^{m}\right)+6m\frac{\#\left(\left(E+1\right)\Delta E\right)}{\# E}\log\#\mathcal{Q}\right)\\
			&=\sum_{E\subset [0,n)}\mu_{x,\Lambda}\left(O_{E}\right)\cdot\frac{\# E}{n}\cdot\left(\frac{1}{m}H_{\left(\hat{\mu}_{x,\Lambda\cap O_{E}}^{+}\right)^{E}}\left(\pi^{-1}\mathcal{Q}^{m}\right)+6m\frac{\#\left(\left(E+1\right)\Delta E\right)}{\# E}\log\#\mathcal{Q}\right)\\
			&\leq\beta^{L}\frac{1}{m}H_{(\beta^{L})^{-1}\chi^{L}\hat{\mu}^{+}}(\pi^{-1}\mathcal{Q}^{m})+\frac{12m\log\#\mathcal{Q}}{L}, \quad\text{ as } n\to+\infty.\\
		\end{aligned}
	\end{equation*}
In the final step, we utilized the fact that $\#\left((E+1) \Delta E\right) \leq \frac{2n}{L}$, which follows from the observation that the complement of $E$ is a disjoint union of intervals, each with length greater than $L$.

\end{proof}

\subsection{Entropy estimation of neutral component}
Given $\kappa > 0$, let $G_{\kappa}^{1} \subset \mathcal{R}$ denote the set of points $x$ such that $\dim E^{+}_{x} = 1$ and $\exp_{x}^{-1}\left(W^{u}(x)\right)$ contains the graph of a 1-Lipschitz map $h: I_{\kappa} \to \left(E^{+}_{x}\right)^{\bot}$. Here, $I_{\kappa}$ denotes the interval in $E^{+}_{x}$ centered at 0 with length $\kappa$, and $\left(E^{+}_{x}\right)^{\bot}$ represents the subspace in $T_{x}M$ orthogonal to $E^{+}_{x}$. $G_{\kappa}^{1}$ is a Borel measurable set (see \cite[Lemma 4]{Bur24}). Roughly speaking, $G_{\kappa}^{1}$ is the set of points $x$ whose Pesin unstable manifold is one-dimensional and has length at least $\kappa$. 

Recall $\rho:\hat{M}\to\mathbb{R}$ is the continuous function $$\rho(x, E):=\log\|Df_{x}|_{E}\|.$$ Define another continuous function $\rho':\hat{M}\to\mathbb{R}$ by $$\rho'(x,E):=\rho(x, E)-\frac{1}{r}\log^{+}\|Df_{x}\|.$$  Given $\delta>0$, let $H_{\delta} \subset \hat{M}$ be the set of $\hat{x}=(x,E)$ such that for any $n>0$,  $$\sum_{i=0}^{n-1}\rho'(\hat{f}^{i-n}(\hat{x}))=\sum_{i=0}^{n-1}\rho(\hat{f}^{i-n}(\hat{x}))-\frac{1}{r}\sum_{i=0}^{n-1}\log^{+}\|Df_{f^{i-n}(x)}\|\geq n\delta.$$ We can expect (see \cite[Lemma 3]{Bur24}) that for an ergodic measure $\mu$ with exactly one positive exponent satisfying $$\lambda^{+}(f,\mu)>\frac{\log^{+}\|Df\|}{r}+\delta,$$ we have $$\hat{\mu}^{+}(H_{\delta})>0.$$ We can interpret the set $H_{\delta}$, which generates the so-called "hyperbolic times" in the usual manner: an integer $n$ is referred to as a hyperbolic time for $\hat{x}$ if $\hat{f}^{n}(\hat{x}) \in H_{\delta}$. The key distinction from Buzzi-Crovisier-Sarig \cite{BCS22} lies in the function $\rho'$, which quantifies the degree of non-uniform hyperbolicity and defines $H_{\delta}$, whereas in \cite{BCS22}, the corresponding function is $\rho$.

Recall $H(t):=t\log \frac 1 t+(1-t)\log\frac 1{1-t},\,t\in(0,1)$ and write $$\delta^{L}_{n}(\hat{x}):=\frac{1}{n}\sum_{i\in E^{L}_{n}(\hat{x})}\delta_{\hat{f}^{i}(\hat{x})}.$$

Next, we revisit a result from Burguet \cite{Bur24} which, roughly speaking, estimates the maximal number of reparametrizations required to cover a small segment of a Pesin unstable manifold.

\begin{Proposition}[{\cite[Proposition 1]{Bur24}}]\label{Burguet neutral estimation}
	Let $f$ be a $C^{r}(r>1)$ diffeomorphism on a compact manifold $M$. There are $\kappa>0$ and a sequence $\{\delta_{q}>0\}_{q\in\mathbb{N}}$ with the following property.
	
	Given $\delta>0$, consider $\hat{H}:=\pi^{-1}G_{\kappa}^{1}\cap H_{\delta}$ (which generates $\mathcal{E}_{n}^{L}$). For any $L, n, q\in\mathbb{N}$, any finite partition $\mathcal{Q}$ on $M$ with diameter less than $\delta_{q}$, any unstable curve $\sigma\subset W^{u}_{\rm loc}(x)$ (for some $x\in\mathcal{R}$ with $\dim E^{+}_{x} = 1$) and any atom $A\in\mathcal{Q}^{\mathcal{E}_{n}^{L}}$, there is a family $\Phi_{A\cap\sigma}$ of $C^{r}$ curves $\gamma:[-1,1]\to M$ such that 
	\begin{itemize}
		\item $A\cap\sigma\subset \cup_{\gamma\in \Phi_{A\cap\sigma}}\gamma([-1,1])$,
		\item $\|d(f^{k}\circ \gamma)\|\leq 1$ for $0\leq k\leq n-1$ and $\gamma\in \Phi_{A\cap\sigma}$,
		\item writing $E^{L}_{n}:=E^{L}_{n}(x, E^{+}_{x})$ (since $E^{L}_{n}(x, E^{+}_{x})$ is identical for all $x\in A\cap\sigma$),
		\begin{equation*}
			\begin{aligned}
				 \frac{1}{n}\log\#\Phi_{A\cap \sigma}
				&\leq \left(1-\frac{E^{L}_{n}}{n}\right)\cdot C(f)+(\log 2 +\frac{1}{r-1})\cdot \left(\int\frac{\log^{+}\|Df^{q}_{x}\|}{q}d\left(\pi_{*}\left(\delta^{L}_{n}\left(\hat{x}\right)\right)\right)\left(x\right)-\int\rho d\left(\delta^{L}_{n}\left(\hat{x}\right)\right)\right)\\
				&+ \tau_{1}(L,q)+\tau_{2}(n).
			\end{aligned}
		\end{equation*}
		Here $$C(f):=\frac{\log^{+}\|Df\|}{r}+2A_{f}H(A_{f}^{-1})+B_{r}$$ where $A_{f}:=\log^{+}\|Df\|+\log^{+}\|Df^{-1}\|+1$ and $B_{r}$ is a universal constant depending only on $r$. The error terms $\tau_{1}(L,q), \tau_{2}(n)$ satisfy $$\limsup_{q\to+\infty}\limsup_{L\to+\infty}\tau_{1}(L,q)=0,\quad\lim_{n\to+\infty}\tau_{2}(n)=0.$$
	\end{itemize}
\end{Proposition}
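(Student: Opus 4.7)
The plan is to transplant Burguet's proof of \cite[Proposition 1]{Bur24} essentially verbatim into our setting. The proof is local and operates on a one-dimensional unstable curve sitting inside the ambient manifold, so the passage from two-dimensional surface diffeomorphisms to three-dimensional flows introduces no genuinely new difficulty: the $C^r$-regularity lives only along $\sigma$, the Lipschitz-graph structure provided by $x\in G^1_\kappa$ and the past expansion provided by $H_\delta$ are both scalar conditions depending only on the one-dimensional bundle $E^+_x$. The construction of $\Phi_{A\cap\sigma}$ proceeds by induction on the time variable $k=0,1,\ldots,n-1$, processed in blocks of size $q$: at each stage we maintain a family of $C^r$ parametrizations $\gamma:[-1,1]\to M$ whose images cover the relevant portion of $A\cap\sigma$ and satisfy $\|d(f^j\circ\gamma)\|\le 1$ for all $j\le k$. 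Advancing by one block means pushing by $f^q$ and then applying the classical Yomdin reparametrization lemma (in dimension one) to restore the bound on the derivative.

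The splitting of the contribution into the two main terms is driven by whether the block starts at a hyperbolic time, i.e.\ whether $\hat f^{k}(\hat x)\in \pi^{-1}G^1_\kappa\cap H_\delta$. At a non-hyperbolic position, we have no extra control, so Yomdin's lemma forces roughly $\|Df\|^{1/r}$ new pieces per unit time; there are about $n-\#E^L_n$ such positions, producing the first term $(1-\#E^L_n/n)\,C(f)$. The constant $C(f)$ bundles three contributions: the Yomdin exponent $\log^+\|Df\|/r$ itself, the combinatorial entropy $2A_f H(A_f^{-1})$ that encodes within each block which positions play the role of hyperbolic times, and the universal Yomdin constant $B_r$. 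At a hyperbolic position, the condition $\hat f^k(\hat x)\in H_\delta$ gives the past-time lower bound $\sum_{i=0}^{k-1}\rho'(\hat f^{i-k}(\hat x))\ge k\delta$, which upgrades the naive Yomdin bound $\|Df^q\|^{1/r}$ at scale $q$ to the much sharper effective count $\exp(q\cdot\int\rho\, d\delta^L_n)$. The saving, propagated across dyadic scales (which is where the geometric-series factor $\log 2+1/(r-1)$ enters), produces the second term $(\log 2 + \tfrac{1}{r-1})\bigl(\int\tfrac{\log^+\|Df^q\|}{q}\,d\pi_*\delta^L_n-\int\rho\, d\delta^L_n\bigr)$. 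The condition $x\in G^1_\kappa$ is what makes this coherent on a segment of definite size $\kappa$ rather than infinitesimally.

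The technical heart, and the main obstacle, is the bookkeeping that produces the two error terms. The term $\tau_1(L,q)$ collects boundary corrections: within a single maximal interval $[a,b)\subset E^L$, effective hyperbolicity must be propagated uniformly from the endpoints, and the proportion of positions that sit within distance $q$ of such a boundary is of order $q/L$, which vanishes as $L\to\infty$ after $q$ is chosen. The term $\tau_2(n)$ absorbs the replacement of pointwise time averages along $\hat f^i(\hat x)$ by integrals against the empirical measure $\delta^L_n(\hat x)$. The diameter bound $\mathrm{diam}(\mathcal{Q})\le\delta_q$ is calibrated so that each atom of $\mathcal{Q}^{\mathcal{E}_n^L}$ meeting $\sigma$ projects onto a single unstable arc short enough for the Yomdin induction at block size $q$ to apply; hence the sequence $\{\delta_q\}$ is determined (independently of $n$, $L$, $\mu$) by the $C^r$-geometry of $f$ and the Yomdin constants. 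Once these three ingredients are carefully intertwined as in \cite[Sections 3--4]{Bur24}, taking $\frac{1}{n}\log$ of the resulting product count yields exactly the displayed inequality.
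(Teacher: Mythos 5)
Your proposal is correct and takes essentially the same route as the paper: the paper itself gives no proof of this proposition, only a citation to Burguet's Proposition~1 in \cite{Bur24} together with a remark recording the two adaptations (that the argument is a purely one-dimensional covering argument along an unstable curve, so it transports to higher ambient dimension; and that the diameter bound $\diam(\mathcal Q)\le\delta_q$ ensures all points of $f^i(A)$ have identical dynamical behaviour, so Burguet's measure $\hat\zeta^M_{F_n}$ on the leaf can be replaced by the single empirical measure $\delta^L_n(\hat x)$). Your sketch is consistent with both adaptations and additionally gives an accurate account of the Yomdin induction, the block structure, and the origin of the error terms $\tau_1,\tau_2$ that the paper simply defers to \cite[Sections~3--4]{Bur24}.
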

\begin{Remark} We make several comments.
	\begin{itemize}
		\item The primary term of interest in $C(f)$ is $\frac{\log^{+}\|Df\|}{r}$ which, by applying large iterations of $f$, generates the commonly observed threshold $\frac{\lambda^{+}(f)}{r}$ in Yomdin theory. In Yomdin theory, the term $\frac{\lambda^{+}(f)}{r}$ typically represents the maximal number (approximately $e^{n \cdot \frac{\lambda^{+}(f)}{r}}$) of reparametrizations  required to cover a "flat" curve within a dynamical ball $B_{n}(x, \varepsilon)$.
		\item We typically refer to \( k \in E^{L}(\hat{x}) \) as a \emph{hyperbolic time} of \( \hat{x} \), even though, in this abstract case, the hyperbolicity in the usual sense is not directly involved. Recall that by definition, $\mathcal{P}^{\mathcal{E}_{n}^{L}}$ is a partition (according to the hyperbolic times) of some Pesin's local unstable manifold. To illustrate the idea, let us simply assume that we are working in the uniformly hyperbolic case (say an Anosov automorphism $f$ on $\mathbb{T}^2$ with an exponent $\lambda>0$).  Any piece $A$ (representing the element in $\mathcal{Q}^{\mathcal{E}_{n}^{L}}$) of the unstable manifolds  inside $B_{n}(x,\varepsilon)$ has length $\lesssim e^{-n\lambda}$ and hence can be reparameterized by some $\gamma:[-1,1]\to M$ with $\|d\,\gamma\| \leq e^{-n\lambda}$. Consequently, thanks to the uniform expansion on the local unstable manifolds, we have $\|d(f^{k}\circ \gamma)\| \leq 1$ for $0\leq k\leq n-1$. This means that one single curve $\gamma$ is enough to properly cover $A$.  However back to our case where we do not have uniform hyperbolicity. The family $\Phi_{A}$ of $C^{r}$ curves $\gamma:[-1,1]\to M$ has to be constructed inductively based on whether the time is hyperbolic. At a hyperbolic time $k$ (i.e., $k\in E^{L}_{n}$), one can roughly think that the uniform expansion (i.e., backward contraction) can be somehow recovered. The condition $\|d(f^{k}\circ \gamma)\| \leq 1$ is automatically satisfied for $\gamma$ constructed from the previous inductive step up to some negligible error indicated by the subsequent term  $\left(\int\cdots-\int\cdots\right)$ (see Section 4.2 in \cite{Bur24} for precise arguments) which tends to zero for appropriately chosen points $\hat{x}$ as $q\to +\infty$ (see the proof of Proposition \ref{invariant cr case with error term}). However, at a non-hyperbolic time $k$ (i.e., $k\in [0, n)\setminus E^{L}_{n}$), since we do not have any hyperbolicity, we need to divide  $\gamma$  into multiple smaller segments (reparametrizations) $ \theta_{i} $ to ensure that the condition $ \|d(f^{k} \circ \gamma \circ \theta_{i})\|  \leq 1 $ is once again satisfied. The cardinality of these smaller segments is bounded above by the term $e^{\frac{\lambda^{+}(f)}{r}}$ in Yomdin theory and the number of non-hyperbolic times is approximately $n-E^{L}_{n}$. This results in the main term $\left(1-\frac{E^{L}_{n}\left(\hat{x}\right)}{n}\right)\cdot C(f)$ in the conclusion. 
		\item The original statement of {\cite[Proposition 1]{Bur24}} is presented in the context of surface diffeomorphisms. However, the proof primarily addresses the covering of one-dimensional curves and can be readily adapted to higher-dimensional systems, provided we focus on the dynamics restricted to curves. Another distinction lies in our use of the empirical measure $\delta^{L}_{n}(\hat{x})$ instead of its average over $A$ w.r.t. some measure (denoted as $\hat{\zeta}^{M}_{F_{n}}$ in their terminology) defined on the unstable leaf. This approach is justified by the fact that the  diameter of $f^{i}(A)$ less than the small constant $\delta_{q}$ for all $0 \leq i \leq n-1$, ensuring that any two points in $A$ exhibit identical dynamical behavior. Consequently, there is no significant difference between $\delta^{L}_{n}(\hat{x})$ and its average over $A$. For further details, see Section 4.2 in \cite{Bur24}. The motivation for using $\delta^{L}_{n}(\hat{x})$ is that it does not rely on a predefined reference measure, unlike $\hat{\zeta}^{M}_{F_{n}}$ in \cite{Bur24}, which is defined on a subset relative to such a measure (although this dependence is not used in the proof).

	\end{itemize}
	
\end{Remark}

Recall that $\chi^{L}$ denotes the indicator function of the subset $\left\{\hat{x}\in \hat{M}:~0\in E^{L}\left(\hat{x}\right)\right\}$ and $\beta^{L}:=\hat{\mu}^{+}\left(\left\{\hat{x}\in \hat{M}:~0\in E^{L}\left(\hat{x}\right)\right\}\right)$.

The following result is a direct application of Proposition \ref{Burguet neutral estimation}, offering an entropy upper bound for the neutral component.
\begin{Proposition}\label{entropy estimation of neutral component}
	Let $f$ be a $C^{r}(r>1)$ diffeomorphism on a compact manifold $M$. There are $\kappa>0$ and a sequence $\{\delta_{q}>0\}_{q\in\mathbb{N}}$ with the following property.
	
	Let $\mu$ be an ergodic measure with exactly one positive Lyapunov exponent and let $\{\mu_{x}\}$ be a family of conditional measures of $\mu$ on the local unstable manifolds (w.r.t. some subordinate partition $\xi^{u}$). Given $\hat{H}:=\pi^{-1}G_{\kappa}^{1}\cap H_{\delta}$ (which generates $\mathcal{E}_{n}^{L}$), let  $\Lambda\subset \mathcal{R}$ be a subset with $\mu(\Lambda)>\frac{1}{2}$ such that for any $L\in\mathbb{N}\cup\{\infty\}$, we have with $\hat{x}:=(x, E^{+}_{x})$,
	\begin{equation}\label{assumption: uniform convergence}
		\Big(\frac{1}{n}\sum_{i\in E^{L}_{n}(\hat{x})}\delta_{\hat{f}^{i}(\hat{x})}\Big)\xrightarrow{\text{weak $\ast$}} \chi^{L}\cdot\hat{\mu}^{+},\quad \text{uniformly in $x\in\Lambda$}.
	\end{equation}
	Then for any $L, q\in\mathbb{N}$ and any two finite partitions $\mathcal{P}, \mathcal{Q}$ on $M$ with $\mu(\partial\mathcal{P})=0$ and $\diam(\mathcal{Q})\leq\delta_{q}$,
	{\small \begin{equation*}
		\begin{aligned}
			\limsup_{n\to+\infty}\frac{1}{n}H_{\mu_{x,\Lambda}}\left(\mathcal{P}^{[0,n)}\bigg|\mathcal{Q}^{\mathcal{E}_{n}^{L}}\right)
			&\leq (1-\beta^{L})\cdot C(f)+(\log 2 +\frac{1}{r-1})\cdot \left(\int\frac{\log^{+}\|Df^{q}_{x}\|}{q}d\left(\pi_{*}\left(\chi^{L}\hat{\mu}^{+}\right)\right)\left(x\right)-\int\rho d\left(\chi^{L}\hat{\mu}^{+}\right)\right)\\
			&+ \tau_{1}(L,q)\quad \mu\text{ - a.e. } x\in\Lambda
		\end{aligned}
	\end{equation*}}
	where the error term $\tau_{1}(L,q)$ satisfy $$\limsup_{q\to+\infty}\limsup_{L\to+\infty}\tau_{1}(L,q)=0.$$
\end{Proposition}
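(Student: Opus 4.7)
The plan is to bound the conditional entropy by counting atoms of $\mathcal{P}^{[0,n)}$ meeting atoms of $\mathcal{Q}^{\mathcal{E}_n^L}$ restricted to the unstable leaf through $x$, then to invoke the curve covering of Proposition \ref{Burguet neutral estimation}, and finally to pass to the weak-$\ast$ limit using the hypothesized convergence of $\delta^L_n(\hat{x})$. Since $\mu$ is ergodic with exactly one positive Lyapunov exponent, $\dim E^+_x=1$ for $\mu$-a.e.\ $x$, so Proposition \ref{Burguet neutral estimation} applies with $\sigma=W^u_{\rm loc}(x)$.

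First I would use the elementary conditional entropy bound $H_\nu(\alpha)\leq\log\#\{A\in\alpha:\nu(A)>0\}$ applied to the conditional measures of $\mu_{x,\Lambda}$ with respect to $\mathcal{Q}^{\mathcal{E}_n^L}$, giving
\begin{equation*}
H_{\mu_{x,\Lambda}}\bigl(\mathcal{P}^{[0,n)}\bigm|\mathcal{Q}^{\mathcal{E}_n^L}\bigr)\;\leq\;\log\sup_{A\in\mathcal{Q}^{\mathcal{E}_n^L}}\#\bigl\{B\in\mathcal{P}^{[0,n)}:~B\cap A\cap W^u_{\rm loc}(x)\neq\emptyset\bigr\}.
\end{equation*}
Then I would apply Proposition \ref{Burguet neutral estimation} to cover each $A\cap W^u_{\rm loc}(x)$ by the family $\Phi_{A\cap\sigma}$ of reparametrizations $\gamma:[-1,1]\to M$ with $\|d(f^k\circ\gamma)\|\leq 1$ for $0\leq k<n$, which directly yields the quantitative estimate on $\tfrac{1}{n}\log\#\Phi_{A\cap\sigma}$ that appears in the conclusion.

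Next, to pass from the cardinality of $\Phi_{A\cap\sigma}$ to the number of $\mathcal{P}^{[0,n)}$-atoms it meets, I would observe that as $t$ ranges over $[-1,1]$, the address $(P_0(t),\ldots,P_{n-1}(t))$ of $\gamma(t)$ in $\mathcal{P}^n$ changes only when some iterate $f^k\circ\gamma$ crosses $\partial\mathcal{P}$. Because each $f^k\circ\gamma$ is $1$-Lipschitz on a domain of length $2$, and because $\mathcal{P}$ (by a standard approximation preserving $\mu(\partial\mathcal{P})=0$) may be taken to have piecewise smooth boundary, the number of such crossings is $O(1)$ per iterate. Hence each $\gamma$ meets at most $O(n)$ atoms of $\mathcal{P}^{[0,n)}$, contributing only a harmless $O(\log n)$ term to the logarithm, which vanishes after dividing by $n$.

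Finally I would pass to $\limsup_{n\to+\infty}$. Assumption (\ref{assumption: uniform convergence}) gives $\tfrac{\#E^L_n(\hat{x})}{n}\to\beta^L$ and $\delta^L_n(\hat{x})\xrightarrow{w^\ast}\chi^L\hat{\mu}^+$ uniformly in $x\in\Lambda$, and the integrands $\rho$ and $x\mapsto\tfrac{1}{q}\log^+\|Df^q_x\|$ are continuous on $\hat{M}$ and $M$ respectively, so the two integrals appearing in Proposition \ref{Burguet neutral estimation} converge to the corresponding integrals against $\chi^L\hat{\mu}^+$. Together with $\tau_2(n)\to 0$, this yields the stated upper bound for $\mu$-a.e.\ $x\in\Lambda$. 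The main obstacle is the middle step: a naive bound on the number of $\mathcal{P}^{[0,n)}$-atoms met by a single reparametrization is exponential in $n$, and one must genuinely exploit connectedness of $\gamma$ together with the $1$-Lipschitz control on iterates to reduce this to polynomial growth; once that is justified, the rest is a routine combination of Proposition \ref{Burguet neutral estimation} with weak-$\ast$ continuity.
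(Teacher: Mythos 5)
There is a genuine gap in the middle step of your argument, and it is precisely the step you flag as "the main obstacle." Your proposed fix — that each $f^k\circ\gamma$ is a $1$-Lipschitz curve of length at most $2$ and hence crosses a piecewise smooth $\partial\mathcal{P}$ only $O(1)$ times — is false. A $1$-Lipschitz $C^\infty$ curve can cross a single smooth hypersurface arbitrarily many (even infinitely many) times: for instance $t\mapsto\bigl(t,\ e^{-1/t^2}\sin(1/t)\bigr)$ has bounded derivative yet crosses the axis infinitely often near $t=0$, and rescaled variants $t\mapsto(t,\epsilon\cos(\omega t))$ with $\epsilon=\omega^{-r}$ have bounded $C^r$ norm but roughly $\omega$ crossings. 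Length plus smoothness control neither the number of crossings of a smooth hypersurface nor, therefore, the number of $\mathcal{P}$-addresses visited per iterate. This kind of crossing bound requires semi-algebraic structure on $\partial\mathcal{P}$ and polynomial (not merely $C^r$-bounded) parametrizations, which are not available here since $\mathcal{P}$ is only constrained by $\mu(\partial\mathcal{P})=0$.

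The paper takes a genuinely different, and essentially dynamical rather than geometric, route, and it is worth seeing why it cannot be replaced by your purely geometric counting. The paper's claim bounds not $\#\{B\in\mathcal{P}^{[0,n)}:B\cap\gamma\neq\emptyset\}$ but the restricted count $\#\{B\in\mathcal{P}^{[0,n)}:B\cap\Lambda\cap\gamma\neq\emptyset\}$. This restriction to $\Lambda$ disappears in your write-up but is in fact the crux: one cannot expect a sub-exponential bound over all of $\gamma$, only over the portion of $\gamma$ consisting of good (uniformly generic) orbits. Concretely, from $\mu(\partial\mathcal{P})=0$ the paper picks a small $\tau$ and a continuous cutoff $h$ supported near $\partial\mathcal{P}$ with $\int h\,d\mu<\varepsilon$; the uniform weak-$\ast$ convergence in assumption (\ref{assumption: uniform convergence}) with $L=\infty$ then gives, uniformly over $x\in\Lambda$, that orbits from $\Lambda$ spend asymptotic density $<\varepsilon$ of their first $n$ iterates in the $\tau$-neighborhood of $\partial\mathcal{P}$. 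Subdividing $\gamma$ into at most $\tau^{-1}+1$ affine pieces $\theta$ (so that each $f^k\circ\gamma\circ\theta$ has length $<\tau$ by the $1$-Lipschitz property), if some $\theta$ met more than $(\#\mathcal{P})^{n\varepsilon}$ atoms of $\mathcal{P}^{[0,n)}$ within $\Lambda$, then the connected arc $f^k\circ\gamma\circ\theta$ would have to cross $\partial\mathcal{P}$ for at least $n\varepsilon$ values of $k$, and any point of $\Lambda\cap\gamma\circ\theta$ would therefore lie in the $\tau$-neighborhood of $\partial\mathcal{P}$ at density $\geq\varepsilon$ along its orbit — contradicting the Birkhoff-type bound above. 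Thus the sub-exponential count comes from ergodic equidistribution away from $\partial\mathcal{P}$, not from any a priori bound on how many times a smooth curve can cross the boundary. The rest of your outline (the reduction to $\#\Phi_{A\cap\sigma}$ via Proposition \ref{Burguet neutral estimation}, and the weak-$\ast$ passage to the limit) matches the paper, but without replacing the geometric crossing count by this dynamical argument the proof does not close.
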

	\begin{proof}
		Note as before, for $\mu$-a.e. $x\in\Lambda$, we may assume $\mu_{x}(\Lambda)>0$ so that $\mu_{x,\Lambda}:=\frac{\mu_{x}(\Lambda\cap\cdot)}{\mu_{x}(\Lambda)}$ is a well defined probability measure.

	The constant $\kappa>0$ and the sequence $\{\delta_{q}>0\}_{q\in\mathbb{N}}$ are from Proposition \ref{Burguet neutral estimation}. Given any unstable curve $\sigma\subset W^{u}_{\rm loc}(x)$ (for some $\mu$-typical point $x\in\Lambda$) and $A\in \mathcal{Q}^{\mathcal{E}_{n}^{L}}$, let  $\Phi_{A\cap \sigma}$ be the family of $C^{r}$ curves $\gamma:[-1,1]\to M$ that covers $A\cap\sigma$ with the properties listed in Proposition \ref{Burguet neutral estimation}. Since our primary concern is the entropy estimation on $\Lambda$ with respect to the measure $\mu_{x,\Lambda}$, we may, after possibly restricting to a sub-family, assume that each $\gamma \in \Phi_{A \cap \sigma}$ has a non-empty intersection with $A \cap \sigma \cap \Lambda$.
	
The following result states that there are at most sub-exponentially many elements intersecting $\Lambda\cap\gamma $.
		\begin{Claim}
			$$\limsup_{n\to+\infty}\frac{1}{n}\log\sup_{A\in\mathcal{Q}^{\mathcal{E}_{n}^{L}}, \gamma\in\Phi_{A\cap\sigma}}\#\left\{B\in\mathcal{P}^{[0,n)}:~ B\cap \Lambda\cap\gamma\neq \emptyset\right\}=0.$$
		\end{Claim}
		\begin{proof}[Proof of Claim]
			Since $\mu(\partial\mathcal{P})=0$, given $\varepsilon>0$, we can find $\tau>0$ small enough and a continuous function $h: M\to (0,+\infty)$ such that 
			\begin{itemize}
				\item $h(x)=1$ if $x$ is in the $\tau$-neighborhood of $\partial\mathcal{P}$ (denoted by $B(\partial\mathcal{P}, \tau)$),
				\item $\int h d\mu<\varepsilon$.
			\end{itemize} 
			By the assumption (\ref{assumption: uniform convergence}) for the case $L=\infty$, we have with $\hat{x}:=(x, E^{+}_{x})$ $$\Big(\frac{1}{n}\sum_{i\in [0,n)}\delta_{\hat{f}^{i}(\hat{x})}\Big)\xrightarrow{\text{weak $\ast$}} \hat{\mu}^{+},\quad \text{uniformly in $x\in\Lambda$}.$$ Consequently, $$\Big(\frac{1}{n}\sum_{i\in [0,n)}\delta_{f^{i}(x)}\Big)\xrightarrow{\text{weak $\ast$}} \mu,\quad \text{uniformly in $x\in\Lambda$}.$$ In particular for the function $h$, we have uniformly for all $x\in\Lambda$, \begin{equation}\label{conquence of uniform convergence}
			\limsup_{n\to+\infty}\frac{1}{n}\#\left\{i\in[0,n):~f^{i}(x)\in B(\partial\mathcal{P}, \tau)\right\}\leq \lim_{n\to+\infty}\frac{1}{n}\sum_{i\in [0,n)}h(f^{i}(x))=\int h d\mu<\varepsilon. 
			\end{equation}
			To prove the claim, it is sufficient to prove for all large $n$, $$\sup_{A\in\mathcal{Q}^{\mathcal{E}_{n}^{L}}, \gamma\in\Phi_{A\cap\sigma}}\#\left\{B\in\mathcal{P}^{[0,n)}:~ B\cap \Lambda\cap\gamma\neq \emptyset\right\}\leq (\tau^{-1}+1)\left(\#\mathcal{P}\right)^{n\varepsilon}.$$ Going by contradiction, assuming for a sufficiently large $n$, there are some $A\in\mathcal{Q}^{\mathcal{E}_{n}^{L}}$ and $ \gamma\in\Phi_{A\cap\sigma}$ violating the above inequality, i.e., $$\#\left\{B\in\mathcal{P}^{[0,n)}:~ B\cap \Lambda\cap\gamma\neq \emptyset\right\}> (\tau^{-1}+1)\left(\#\mathcal{P}\right)^{n\varepsilon}.$$ Since $\|d(f^{k}\circ \gamma)\| \leq 1$ for $0\leq k\leq n-1$, we may reparametrize $\gamma$ with at most $(\tau^{-1}+1)$ affine contractions $\theta: [-1,1]\to [-1,1]$ such that $f^{k}\circ\gamma\circ\theta$ has length less than $\tau$ for $0\leq k\leq n-1$. We may assume that every such $\theta$ satisfies $\Lambda\cap\gamma\circ\theta\neq \emptyset$. Note that among these contractions, there must be some $\theta$ such that 
			\begin{equation}\label{many intersections}
				\#\left\{B\in\mathcal{P}^{[0,n)}:~ B\cap \Lambda\cap\gamma\circ\theta\neq \emptyset\right\}> \left(\#\mathcal{P}\right)^{n\varepsilon}.
			\end{equation} For any two different elements $B', B''\in \left\{B\in\mathcal{P}^{[0,n)}:~ B\cap \Lambda\cap\gamma\circ\theta\neq \emptyset\right\}$, there is some $k\in[0,n)$ such that $f^{k}(B'), f^{k}(B'')$ are contained in two different elements of $\mathcal{P}$ which implies that $f^{i}\circ\gamma\circ\theta$ must intersect the boundary of $\mathcal{P}$ (since it connects  $f^{k}(B')$ and $f^{k}(B'')$). As a consequence of Formula (\ref{many intersections}), $$\#\left\{k\in[0,n):~\partial\mathcal{P}\cap f^{k}\circ\gamma\circ\theta  \neq \emptyset\right\}\geq n\varepsilon.$$ Recall $f^{k}\circ\gamma\circ\theta$ has length less than $\tau$ for all $k\in[0,n)$. Hence for any $x\in \Lambda\cap\gamma\circ\theta$, $$\#\left\{k\in[0,n):~ f^{k}(x)\in B(\partial\mathcal{P}, \tau)\right\}\geq n\varepsilon$$ which contradicts the uniform convergence in Formula (\ref{conquence of uniform convergence}).
		\end{proof}
	We note that the element $\xi^{u}(x)$ of the subordinate partition $\xi^{u}$ which supports $\mu_{x,\Lambda}$ can be covered by finitely many local unstable curves $\{\sigma_{i}\}_{1\leq i\leq l}$. As a consequence of the claim above, by Proposition \ref{Burguet neutral estimation}, we have 
		\begin{equation*}
			\begin{aligned}
				\limsup_{n\to+\infty}\frac{1}{n}H_{\mu_{x,\Lambda}}\left(\mathcal{P}^{[0,n)}\bigg|\mathcal{Q}^{\mathcal{E}_{n}^{L}}\right)
				&\leq\limsup_{n\to+\infty}\frac{1}{n}\sum_{A\in \mathcal{Q}^{\mathcal{E}_{n}^{L}}}\mu_{x,\Lambda}(A)\cdot \log\#\left\{B\in\mathcal{P}^{[0,n)}:~ B\cap A\cap\xi^{u}(x)\neq \emptyset\right\}\\
				&=\limsup_{n\to+\infty}\frac{1}{n}\sum_{A\in \mathcal{Q}^{\mathcal{E}_{n}^{L}}}\mu_{x,\Lambda}(A)\cdot\log\max_{i}\#\left\{B\in\mathcal{P}^{[0,n)}:~ B\cap A\cap\sigma_{i}\neq \emptyset\right\}\\
				&\leq\limsup_{n\to+\infty}\frac{1}{n}\sum_{A\in \mathcal{Q}^{\mathcal{E}_{n}^{L}}}\mu_{x,\Lambda}(A)\cdot\log\max_{i}\#\Phi_{A\cap\sigma_{i}}\\
				&\leq (1-\beta^{L})\cdot C(f)\\
				&\quad+(\log 2+\frac{1}{r-1})\cdot \left(\int\frac{\log^{+}\|Df^{q}_{x}\|}{q}d\left(\pi_{*}\left(\chi^{L}\hat{\mu}^{+}\right)\right)\left(x\right)-\int\rho d\left(\chi^{L}\hat{\mu}^{+}\right)\right).
			\end{aligned}
		\end{equation*}	
		Note  in the last step, we have used the following uniform convergence for $x\in\Lambda$, writing $\hat{x}:=(x, E^{+}_{x})$, $$\frac{E^{L}_{n}(\hat{x})}{n}\rightarrow \beta^{L},\quad\delta^{L}_{n}\left(\hat{x}\right):=\Big(\frac{1}{n}\sum_{i\in E^{L}_{n}(\hat{x})}\delta_{\hat{f}^{i}(\hat{x})}\Big)\xrightarrow{\text{weak $\ast$}} \chi^{L}\cdot\hat{\mu}^{+}.$$

	\end{proof}

\subsection{Proof of Theorem \ref{invariant Cr case}}
Recall that in Proposition \ref{Burguet neutral estimation}, we defined $$C(f):=\frac{\log^{+}\|Df\|}{r}+2A_{f}H(A_{f}^{-1})+B_{r}$$ where $A_{f}:=\log^{+}\|Df\|+\log^{+}\|Df^{-1}\|+1$ and $B_{r}$ is a universal constant depending only on $r$.

We now proceed to prove the following result, which closely resembles Theorem \ref{invariant Cr case}, with the only difference being that $\alpha$ is replaced by $C(f)$.
\begin{Proposition}\label{invariant cr case with error term}
		Let $f$ be a $C^{r}(r>1)$ diffeomorphism on a compact manifold $M$. Let $\{\nu_{k}\}_{k\geq 1}$ be a sequence of ergodic measures such that each $\nu_{k}$ has exactly one positive Lyapunov exponent. Assume that the unstable lifts $\hat{\nu}_{k}^{+}$ converges to some invariant measure $\hat{\mu}$ whose projection $\mu$ also has exactly one positive Lyapunov exponent and $\lim_{k\to+\infty} \lambda^{+}(f,\nu_{k})>\frac{\log^{+}\|Df\|}{r}$.

	Then there is a decomposition of $\hat{\mu}=(1-\beta)\hat{\mu}_{0}+\beta \hat{\mu}^{+}_{1}$ for some $\beta\in [0,1]$ and some $\hat{f}$-invariant measures $\hat{\mu}_{0},\hat{\mu}_{1}^{+}$ ($\hat{\mu}_{1}^{+}$ is the unstable lift of some $f$-invariant measure $\mu_{1}$) such that $$\limsup_{k\to+\infty}h(f,\nu_{k})\leq\beta h(f,\mu_{1})+(1-\beta)\cdot C(f).$$
\end{Proposition}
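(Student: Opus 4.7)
The plan is to apply the entropy splitting of Proposition~\ref{entropy splitting} to each $\nu_{k}$, bound the geometric and neutral components via Propositions~\ref{entropy estimation of geometric component} and~\ref{entropy estimation of neutral component}, and then pass successively to the limits $k\to\infty$, $L\to\infty$, $q\to\infty$, $m\to\infty$ to extract from $\hat{\mu}$ the invariant unstable-lift component $\hat{\mu}_{1}^{+}$. Fix $\varepsilon>0$ and $\delta>0$ with $\lim_{k}\lambda^{+}(f,\nu_{k})>\frac{\log^{+}\|Df\|}{r}+\delta$; with $\kappa>0$ from Proposition~\ref{Burguet neutral estimation}, set $\hat{H}:=\pi^{-1}G_{\kappa}^{1}\cap H_{\delta}$, shrinking $\hat{H}$ slightly if necessary so that $\hat{\mu}(\partial\hat{H})=0$. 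Then $\hat{\nu}_{k}^{+}(\hat{H})>0$ for all large $k$ by \cite[Lemma~3]{Bur24}. Since $\hat{\nu}_{k}^{+}$ is ergodic, Egorov's theorem produces a subset $\Lambda_{k}\subset\mathcal{R}$ with $\nu_{k}(\Lambda_{k})>\tfrac{1}{2}$ on which the Birkhoff convergences required in Propositions~\ref{entropy splitting}, \ref{entropy estimation of geometric component}, \ref{entropy estimation of neutral component} hold uniformly for all $L\in\mathbb{N}\cup\{\infty\}$. Choose $\mathcal{P}$ with $\diam\mathcal{P}\leq\delta$ and $\nu_{k}(\partial\mathcal{P})=\mu(\partial\mathcal{P})=0$, and $\mathcal{Q}$ with $\diam\mathcal{Q}\leq\delta_{q}$ and $\nu_{k}(\partial\mathcal{Q}^{m})=\mu(\partial\mathcal{Q}^{m})=0$. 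Combining the three propositions and writing $\beta_{k}^{L}:=\hat{\nu}_{k}^{+}(\chi^{L})$ yields
\begin{align*}
h(f,\nu_{k})-\varepsilon
&\leq \beta_{k}^{L}\,\tfrac{1}{m}H_{(\beta_{k}^{L})^{-1}\chi^{L}\hat{\nu}_{k}^{+}}\!\bigl(\pi^{-1}\mathcal{Q}^{m}\bigr)+(1-\beta_{k}^{L})\,C(f) \\
&\quad +\bigl(\log 2+\tfrac{1}{r-1}\bigr)\!\left(\int\!\tfrac{\log^{+}\|Df^{q}_{x}\|}{q}\,d\pi_{\ast}(\chi^{L}\hat{\nu}_{k}^{+})-\int\!\rho\,d(\chi^{L}\hat{\nu}_{k}^{+})\right) \\
&\quad +H(\tfrac{2}{L})+\tfrac{12m\log\#\mathcal{Q}}{L}+\tau_{1}(L,q).
\end{align*}

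Now let $k\to\infty$. Because $\hat{\mu}(\partial\hat{H})=0$, the jump set of the (otherwise discontinuous) indicator $\chi^{L}$ is $\hat{\mu}$-null, so the Portmanteau theorem combined with $\hat{\nu}_{k}^{+}\to\hat{\mu}$ makes the right-hand side converge to the same expression with $\hat{\nu}_{k}^{+}$ replaced by $\hat{\mu}$ and $\beta_{k}^{L}$ replaced by $\beta^{L}:=\hat{\mu}(\chi^{L})$. Next, send $L\to\infty$. The normalized measures $\hat{\mu}_{1}^{L}:=(\beta^{L})^{-1}\chi^{L}\hat{\mu}$ are asymptotically $\hat{f}$-invariant: $\chi^{L}$ disagrees with $\chi^{L}\circ\hat{f}$ only at the endpoints of the maximal intervals constituting $E^{L}(\hat{x})$, whose density within $E^{L}(\hat{x})$ is $O(1/L)$, so any weak-$\ast$ accumulation point $\hat{\mu}_{1}$ is $\hat{f}$-invariant. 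Put $\beta:=\lim_{L}\beta^{L}$ and let $\hat{\mu}_{0}$ be any weak-$\ast$ accumulation of $(1-\beta^{L})^{-1}(1-\chi^{L})\hat{\mu}$ to obtain the decomposition $\hat{\mu}=(1-\beta)\hat{\mu}_{0}+\beta\hat{\mu}_{1}$.

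It remains to identify $\hat{\mu}_{1}$ with the unstable lift $\hat{\mu}_{1}^{+}$ of $\mu_{1}:=\pi_{\ast}\hat{\mu}_{1}$ and to dispatch the remaining limits. Since $\hat{\mu}_{1}$ is supported on $\overline{H_{\delta}}$, the backward $\rho'$-sums along the lifted direction satisfy $\sum_{0}^{n-1}\rho'\circ\hat{f}^{i-n}\geq n\delta$ for all $n$ on a set of positive $\hat{\mu}_{1}$-measure, and by $\hat{f}$-invariance then $\hat{\mu}_{1}$-almost everywhere; since any lift of $\mu_{1}$ decomposes along $E^{+},E^{0},E^{-}$ by \cite[Corollary~3.4]{BCS22} and only $E^{+}$ can accommodate this exponential expansion, we conclude $\hat{\mu}_{1}=\hat{\mu}_{1}^{+}$ and $\mu_{1}$ has positive maximal exponent. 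As $q\to\infty$, the subadditive convergence $\tfrac{\log^{+}\|Df^{q}_{x}\|}{q}\searrow\lambda^{+}(f,x)$ and dominated convergence give $\int\tfrac{\log^{+}\|Df^{q}_{x}\|}{q}\,d\pi_{\ast}\hat{\mu}_{1}^{+}\to\lambda^{+}(f,\mu_{1})=\int\rho\,d\hat{\mu}_{1}^{+}$, so the parenthetical term vanishes; the error terms $H(2/L)$, $\tfrac{12m\log\#\mathcal{Q}}{L}$, $\tau_{1}(L,q)$ all vanish; and taking $m\to\infty$ with $\mathcal{Q}$ of sufficiently small diameter yields $\tfrac{1}{m}H_{\hat{\mu}_{1}^{+}}(\pi^{-1}\mathcal{Q}^{m})\to h(\hat{f},\hat{\mu}_{1}^{+})=h(f,\mu_{1})$ via Ledrappier--Walters. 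Letting $\varepsilon\to 0$ closes the argument.

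The hard parts will be the interplay with the discontinuous cutoff $\chi^{L}$ and the $\hat{f}$-invariance of the accumulated limit $\hat{\mu}_{1}$. The first hurdle is resolved by choosing $\hat{H}$ from the one-parameter family of admissible thickenings so that $\hat{\mu}(\partial\hat{H})=0$, whereupon Portmanteau applies. The second hurdle relies on the quantitative $O(1/L)$ estimate for endpoints of the $E^{L}$-intervals, which is precisely the same estimate that controls the boundary term in Lemma~\ref{technical lemma} and in the neutral-entropy estimate of Proposition~\ref{entropy estimation of neutral component}.
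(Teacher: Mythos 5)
Your argument hinges on the claim that, after ``shrinking $\hat{H}$ slightly if necessary,'' one can arrange $\hat{\mu}(\partial\hat{H})=0$ and then use the Portmanteau theorem to identify $\lim_k\chi^{L}\hat{\nu}_{k}^{+}$ with $\chi^{L}\hat{\mu}$. This step is not justified. Recall $\hat{H}=\pi^{-1}G_{\kappa}^{1}\cap H_{\delta}$, where $G_{\kappa}^{1}$ is the set of points whose Pesin unstable manifold has size at least $\kappa$ and $H_{\delta}$ is defined through backward Birkhoff sums of $\rho'$. Both are merely Borel measurable: $G_{\kappa}^{1}$ is carved out of the Lyapunov regular set by a size function that has no continuity or semicontinuity properties, and $H_{\delta}$ is an intersection of infinitely many conditions. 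Although $\{G_{\kappa}^{1}\}_{\kappa}$ and $\{H_{\delta}\}_{\delta}$ are nested families, nestedness alone does not give pairwise disjoint boundaries (think of $\{f>t\}$ for a nowhere-continuous indicator $f$: all boundaries coincide and are large), so the usual ``all but countably many parameters give zero boundary measure'' argument does not apply. Moreover the limit measure $\hat{\mu}$ is only known to be $\hat f$-invariant; nothing lets you pick $\kappa,\delta$ adapted to it. In fact the paper explicitly flags precisely this obstruction: it remarks that even though $\hat{\nu}_{k}^{+}\to\hat{\mu}$, one cannot conclude $\chi^{L}\hat{\nu}_{k}^{+}\to\chi^{L}\hat{\mu}$ because $\chi^{L}$ is discontinuous, and it deliberately avoids the identification you attempt.

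The paper's workaround, which your proposal should adopt, is to extract abstract weak-$\ast$ subsequential limits $\beta^{L}\hat{\nu}^{L}$ of $\chi^{L}\hat{\nu}_{k}^{+}$ (via a diagonal argument over $L$), without ever identifying $\hat{\nu}^{L}$ as a normalized restriction of $\hat{\mu}$. The two key facts you need about the final limit $\hat{\mu}_{1}$ are then derived directly from the combinatorial structure of $E^{L}$ and the definition of $H_{\delta}$, not from any formula $\hat{\mu}_{1}=(\beta)^{-1}\chi\hat{\mu}$: the $O(1/L)$ boundary estimate gives $\hat f$-invariance of $\hat{\mu}_{1}$ exactly as you outlined, and for $\hat{\mu}_{1}=\hat{\mu}_{1}^{+}$ the paper shows $(\beta^{L}\hat{\nu}^{L})(\Omega^{L})=\beta^{L}$ for the \emph{closed} sets $\Omega^{L}:=\{\hat{x}:\exists\,1\le l\le L,\ \sum_{m=0}^{l-1}\rho(\hat f^{m}\hat{x})\ge l\delta\}$ --- the closedness of $\Omega^{L}$ is what makes the weak-$\ast$ passage to the limit legitimate here, whereas your reliance on $\overline{H_{\delta}}$ is less precise. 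The rest of your proposal (entropy splitting, uniform Birkhoff sets via ergodicity, the $q\to\infty$ and $m\to\infty$ limits) matches the paper, so the fix is local: replace the Portmanteau step by the diagonal-subsequence step and everything else goes through.
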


\begin{proof}
	Let $\kappa>0$ and  $\{\delta_{q}>0\}_{q\in\mathbb{N}}$ be the constants in proposition \ref{entropy estimation of neutral component}. Let $\delta>0$ be small enough such that $$\lim_{k\to+\infty} \lambda^{+}(f,\nu_{k})>\frac{\log^{+}\|Df\|}{r}+\delta.$$ 
	We fix $\hat{H}:=\pi^{-1}G_{\kappa}^{1}\cap H_{\delta}.$ Recall that $\chi^{L}$ denotes the indicator function of $\left\{\hat{x}\in \hat{M}:~0\in E^{L}\left(\hat{x}\right)\right\}$. Note (see Remark \ref{remark of entropy estimation of geometric component}) that the ergodicity of $\hat{\nu}_{k}^{+}$ implies that for $\hat{\nu}_{k}^{+}$-a.e. $\hat{x}$, 
	\begin{equation}\label{weak star convergence for typical points}
		\Big(\frac{1}{n}\sum_{i\in E^{L}_{n}(\hat{x})}\delta_{\hat{f}^{i}(\hat{x})}\Big)\xrightarrow{\text{weak $\ast$}} \chi^{L}\cdot\hat{\nu}_{k}^{+},\quad n\to+\infty.
	\end{equation}
	Write $\beta^{L}_{k}:=\nu_{k}^{+}\left(\left\{x\in M:~0\in E^{L}\left(x\right)\right\}\right)$. A diagonal argument gives that, up to a sub-sequence in $k$, for any $L\in\mathbb{N}\cup\{\infty\}$, $$\chi^{L}\cdot\hat{\nu}_{k}^{+}\xrightarrow{\text{weak $\ast$}} \beta^{L}\cdot\hat{\nu}^{L},\quad k\to+\infty$$ for some probability measure $\hat{\nu}^{L}$ where $\beta^{L}:=\lim_{k\to+\infty}\beta^{L}_{k}$. We note that, in general, since $\chi^{L}$ may not be a continuous function, even though $\hat{\nu}_{k}^{+} \to \hat{\mu}$, it is possible that $\beta^{L} \cdot \hat{\nu}^{L}$ does not coincide with $\chi^{L} \cdot \hat{\mu}$. Up to a sub-sequence in $L$, we assume that $$\beta^{L}\cdot\hat{\nu}^{L}\xrightarrow{\text{weak $\ast$}}\beta\cdot\hat{\mu}_{1},\quad L\to+\infty$$ for some probability measure $\hat{\mu}_{1}$ (whose projection is denoted by $\mu_{1}$) where $\beta:=\lim_{L\to+\infty}\beta^{L}$ (note that this is an increasing convergence).
	\begin{Claim}
		\begin{itemize} We have the following properties.
			\item $\hat{\mu}_{1}$ is $\hat{f}$-invariant and consequently the probability measure $\hat{\mu}_{0}$ defined by $(1-\beta)\hat{\mu}_{0}=\hat{\mu}-\beta\hat{\mu}_{1}$ is also $\hat{f}$-invariant.
			\item $\lambda(f,x)\geq \delta$ for $\mu_{1}$-a.e. $x$ and $\hat{\mu}_{1}=\hat{\mu}_{1}^{+}$.
		\end{itemize}
	\end{Claim}
	\begin{proof}[Proof of Claim]
		We first note that by definition, both $\hat{\mu}_{1}$ and $\hat{\mu}_{0}$ are positive measures since $0\leq\chi^{L}\cdot\hat{\nu}_{k}^{+}\leq \hat{\nu}_{k}^{+}$. Given $\hat{x}$, we decompose $E^{L}(\hat{x})\cap[0,+\infty)$ into maximal disjoint intervals: $$E^{L}(\hat{x})\cap[0,+\infty)=\bigcup_{i\geq 1}[a_{i}, a_{i}+b_{i}).$$ Given any continuous function $h:\hat{M}\to\mathbb{R}$, by the ergodicity of $\hat{\nu}_{k}^{+}$, for $\nu_{k}^{+}$-a.e. $\hat{x}$,
		\begin{equation}\label{formula of m0}
			(\chi^{L}\cdot\hat{\nu}_{k}^{+})(h)=\int\chi^{L}\cdot h\, d \hat{\nu}_{k}^{+}=\lim_{{j}\to+\infty}\frac1{{a}_{j}+{b}_{j}}\sum_{{i\leq j}} \left(S_{b_i}h\right)(f^{a_i}(\hat{x}))
		\end{equation} where $\left(S_{n}h\right)(\hat{x}):=\sum_{i=0}^{n-1}h(\hat{f}^{i}(\hat{x}))$. Note that there are relatively few (at most $n/L$) such intervals $[a_{i}, a_{i}+b_{i})$, as the complement of $E^{L}_{n}(\hat{x})$ in $[0, n)$ consists of a disjoint union of intervals with lengths exceeding $L$, except possibly the first and the last intervals, which can be disregarded for sufficiently large $n$. By Formula (\ref{formula of m0}), we then have $$\big|(\chi^{L}\cdot\hat{\nu}_{k}^{+})(h)-(\chi^{ L}\cdot\hat{\nu}_{k}^{+})(h\circ\hat{f})\big|\leq \frac{2}{L}\cdot\max_{\hat{y}}h(\hat{y}).$$ Passing to the limit in $k$, we have $$\big|(\beta^{L}\cdot\hat{\nu}^{L})(h)-(\beta^{L}\cdot\hat{\nu}^{L})(h\circ\hat{f})\big|\leq \frac{2}{L}\cdot\max_{\hat{y}}h(\hat{y}).$$ Then passing to the limit in $L$ gives the invariance of $\hat{\mu}_{1}$. 
		
		Next we prove the second property. Note that by definition, for any point $\hat{x}$ and any integer $i\in E^{L}(\hat{x})$ (assuming $E^{L}(\hat{x})\neq \emptyset$ which is the case we are concerned with), there is $1\leq l\leq L$ such that $\hat{f}^{l}\left(\hat{f}^{i}\left(\hat{x}\right)\right)\in H_{\delta}$. By the definition of $H_{\delta}$, we have (recalling $\rho(x, E):=\log\|Df_{x}|_{E}\|$) $$\hat{f}^{i}(\hat{x})\in \Omega^{L}:=\left\{\hat{x}:~ \exists 1\leq l\leq L \text{ such that } \sum_{m=0}^{l-1}\rho\left(\hat{f}^{m}\left(\hat{x}\right)\right)\geq l\cdot\delta\right\}.$$ As a consequence, $$\lim_{n\to+\infty}\Big(\frac{1}{n}\sum_{i\in E^{L}_{n}(\hat{x})}\delta_{\hat{f}^{i}(\hat{x})}\Big)\left(\Omega^{L}\right)=\lim_{n\to+\infty}\Big(\frac{1}{n}\sum_{i\in E^{L}_{n}(\hat{x})}\delta_{\hat{f}^{i}(\hat{x})}\Big)\left(\hat{M}\right).$$ Assuming that $\hat{x}$ satisfies Formula (\ref{weak star convergence for typical points}) (such points carry $\hat{\nu}_{k}^{+}$-full measure), by the compactness of $\Omega^{L}$, the quantities above coincide with $$\left(\chi^{L}\cdot\hat{\nu}_{k}^{+}\right)\left(\Omega^{L}\right)=\left(\chi^{L}\cdot\hat{\nu}_{k}^{+}\right)\left(\hat{M}\right).$$ Again by the compactness of $\Omega^{L}$, passing to the limit in $k$, $$\left(\beta^{L}\cdot\hat{\nu}^{L}\right)\left(\Omega^{L}\right)=\left(\beta^{L}\cdot\hat{\nu}^{L}\right)\left(\hat{M}\right)=\limsup_{k\to+\infty}\left(\chi^{L}\cdot\hat{\nu}_{k}^{+}\right)\left(\Omega^{L}\right)=\limsup_{k\to+\infty}\left(\chi^{L}\cdot\hat{\nu}_{k}^{+}\right)\left(\hat{M}\right)=\beta^{L}$$ and consequently, since $\beta^{L}\cdot\hat{\nu}^{L}\to\beta\cdot\hat{\mu}_{1}$, $$\left(\beta\cdot\hat{\mu}_{1}\right)\left(\bigcup_{L}\Omega^{L}\right)=\left(\beta\cdot\hat{\mu}_{1}\right)\left(\hat{M}\right)=\beta.$$ Therefore the $\hat{f}$-invariant set $\bigcap_{i\in\mathbb{Z}}\hat{f}^{i}\left(\bigcup_{L}\Omega^{L}\right)$ carries $\hat{\mu}_{1}$ full measure in which for any $\hat{x}$, by definition, there is an increasing sequence $0=m_{0}<m_{1}<m_{2}<\cdots$ such that $$\sum_{m=m_{i}}^{m_{i+1}-1}\rho\left(\hat{f}^{m}\left(\hat{x}\right)\right)\geq \left(m_{i+1}-m_{i}\right)\cdot\delta.$$ Hence $$\limsup_{n\to+\infty}\frac{1}{n}\sum_{i=0}^{n-1}\rho\left(\hat{f}\left(\hat{x}\right)\right)\geq \delta.$$ Since we have assumed that  the  projection $\mu$ of $\hat{\mu}$ has only one positive Lyapunov exponent, $\hat{\mu}_{1}$ (as a component of $\hat{\mu}$) must coincide with the (unique) unstable lift $\hat{\mu}_{1}^{+}$ of its projection $\mu_{1}$.

	\end{proof}

	Let  $\Lambda_{k}\subset M$ be a subset with $\nu_{k}(\Lambda_{k})>\frac{1}{2}$ such that for any $L\in\mathbb{N}\cup\{\infty\}$, we have with $\hat{x}:=(x, E^{+}_{x})$, $$\Big(\frac{1}{n}\sum_{i\in E^{L}_{n}(\hat{x})}\delta_{\hat{f}^{i}(\hat{x})}\Big)\xrightarrow{\text{weak $\ast$}} \chi^{L}\cdot\hat{\nu}_{k}^{+},\quad \text{uniformly in $x\in\Lambda_{k}$}.$$ Given a sequence of numbers $\varepsilon_{k}\to 0$, let $\delta_{\nu_{k}}>0$ be the small number determined in Proposition \ref{entropy splitting} w.r.t. each ergodic measure $\nu_{k}$. Given any $L,q\in\mathbb{N}$,  consider any finite partitions $\mathcal{P}_{\nu_{k}}, \mathcal{Q}$ on $M$ with the following properties:
	\begin{itemize}
		\item $\diam(\mathcal{P}_{\nu_{k}})\leq \delta_{\nu_{k}}, \diam(\mathcal{Q})\leq \delta_{q},$
		\item $\nu_{k}(\partial\mathcal{P}_{\nu_{k}})=0$,
		\item for any $m\in\mathbb{N}$, $\nu_{k}(\partial\mathcal{Q}^{m})=0$, and consequently, $\hat{\nu}^{+}_{k}\left(\partial\left(\pi^{-1}\mathcal{Q}^{m}\right)\right)=0,$
		\item $\mu_{1}(\partial\mathcal{Q}^{m})=0$, and consequently, $\hat{\mu}^{+}_{1}\left(\partial\left(\pi^{-1}\mathcal{Q}^{m}\right)\right)=0.$
	\end{itemize}

	By Proposition \ref{entropy splitting}, Proposition \ref{entropy estimation of geometric component} and Proposition \ref{entropy estimation of neutral component}, for $\nu_{k}$-a.e. $x\in\Lambda_{k}$ and $m\in\mathbb{N}$, 
	\begin{equation*}
		\begin{aligned}
			h(f,\nu_{k})-\varepsilon_{k}
			&\leq\limsup_{n\to+\infty}\frac{1}{n}H_{\left(\nu_{k}\right)_{x,\Lambda_{k}}}\left(\mathcal{Q}^{\mathcal{E}_{n}^{L}}\bigg|\mathcal{E}_{n}^{L}\right)+ \limsup_{n\to+\infty}\frac{1}{n}H_{\left(\nu_{k}\right)_{x,\Lambda_{k}}}\left(\mathcal{P}_{\nu_{k}}^{[0,n)}\bigg|\mathcal{Q}^{\mathcal{E}_{n}^{L}}\right)+H(\frac{2}{L})\\
			&\leq\beta^{L}_{k}\frac{1}{m}H_{(\beta_{k}^{L})^{-1}\chi^{L}\hat{\nu}_{k}^{+}}(\pi^{-1}\mathcal{Q}^{m})+\frac{12m\log\#\mathcal{Q}}{L}\\
			&+(1-\beta_{k}^{L})\cdot C(f)+(\log 2+\frac{1}{r-1})\cdot \left(\int\frac{\log^{+}\|Df^{q}_{x}\|}{q}d\left(\pi_{*}\left(\chi^{L}\hat{\nu}_{k}^{+}\right)\right)\left(x\right)-\int\rho d\left(\chi^{L}\hat{\nu}_{k}^{+}\right)\right)\\
			&+ \tau_{1}(L,q)
		\end{aligned}
	\end{equation*} where the error term $\tau_{1}(L,q)$ satisfies $$\limsup_{q\to+\infty}\limsup_{L\to+\infty}\tau_{1}(L,q)=0.$$ Passing to the limits in $k$ and then in $L$, we have 
	\begin{equation*}
		\begin{aligned}
			\limsup_{k\to+\infty}h(f,\nu_{k})
			&\leq\beta\frac{1}{m}H_{\hat{\mu}_{1}}(\pi^{-1}\mathcal{Q}^{m})\\
			&+(1-\beta)\cdot C(f)+(\log 2+\frac{1}{r-1})\cdot \left(\int\frac{\log^{+}\|Df^{q}_{x}\|}{q}d\left(\beta\mu_{1}\right)\left(x\right)-\int\rho d\left(\beta\hat{\mu}_{1}\right)\right)\\
			&+\limsup_{L\to+\infty}\tau_{1}(L,q).
		\end{aligned}
	\end{equation*}
	Since $\hat{\mu}_{1}=\hat{\mu}_{1}^{+}$, $$\int\frac{\log^{+}\|Df^{q}_{x}\|}{q}d\mu_{1}\left(x\right)\to\lambda^{+}(f,\mu_{1})=\int\rho d\hat{\mu}_{1}, \quad q\to+\infty.$$ Hence by noting $h(\hat{f},\hat{\mu}_{1})=h(f,\mu_{1})$ and  letting $q,m \to +\infty$, we have $$\limsup_{k\to+\infty}h(f,\nu_{k})\leq\beta h(\hat{f},\hat{\mu}_{1})+(1-\beta)\cdot C(f)=\beta h(f,\mu_{1})+(1-\beta)\cdot C(f).$$
\end{proof}
Now we turn to the proof of Theorem \ref{invariant Cr case}. The idea is to apply Proposition \ref{invariant cr case with error term} to large iterates of $f$.
\begin{proof}[Proof of Theorem \ref{invariant Cr case}]
	Note that by definition, $$\lim_{p\to+\infty}\frac{C(f^{p})}{p}=\frac{\lambda^{+}(f)}{r}.$$ Hence given $\alpha>\frac{\lambda^{+}(f)}{r}$, we can choose $p\in\mathbb{N}$ large enough such that $$\alpha>\frac{C(f^{p})}{p}.$$ Given $k$, consider any ergodic component $\nu_{k}^{p}$ of $\nu_{k}$ for $f^{p}$ and the corresponding unstable lift $\left(\hat{\nu}_{k}^{p}\right)^{+}$ for $\hat{f}^{p}$. Up to a sub-sequence, we may assume that $\left(\hat{\nu}_{k}^{p}\right)^{+}$ converges to some $\hat{f}^{p}$-invariant measure $\hat{\mu}^{p}$.

	We may assume $\lim_{k\to+\infty} \lambda^{+}(f^{p},\nu^{p}_{k})>\frac{\log^{+}\|Df^{p}\|}{r}$, otherwise set $\beta=0, \hat{\mu}_{0}=\hat{\mu}$ and the inequality in Theorem \ref{invariant Cr case} is trivially true by the following estimation using Ruelle's inequality: $$\limsup_{k\to+\infty}h(f,\nu_{k})=\limsup_{k\to+\infty}\frac{h(f^{p},\nu^{p}_{k})}{p}\leq \limsup_{k\to+\infty}\frac{\lambda^{+}(f^{p},\nu^{p}_{k})}{p}\leq\frac{\log^{+}\|Df^{p}\|}{pr}\leq\frac{C(f^{p})}{p}<\alpha.$$
	
	Applying Proposition \ref{invariant cr case with error term} to the system $f^{p}$ with the measures $\left(\hat{\nu}_{k}^{p}\right)^{+}$ and $ \hat{\mu}^{p}$, we get a decomposition of $\hat{\mu}^{p}=(1-\beta)\hat{\mu}_{0}^{p}+\beta (\hat{\mu}^{p}_{1})^{+}$ for some $\beta\in [0,1]$ and some $\hat{f}^{p}$-invariant measures $\hat{\mu}_{0}^{p},(\hat{\mu}_{1}^{p})^{+}$ (the unstable lift of some $f^{p}$-invariant measure $\mu_{1}^{p}$) such that 
	\begin{equation}\label{finial result for f p}
		\lim_{k\to+\infty}h(f^{p},\nu^{p}_{k})\leq\beta h(f^{p},\mu_{1}^{p})+(1-\beta)\cdot C(f^{p}).
	\end{equation} Define $$\hat{\mu}_{1}^+:=\frac{1}{p}\sum_{i=0}^{p-1}\hat{f}^{i}_{*}\left(\hat{\mu}_{1}^{p}\right)^{+}, \quad \hat{\mu}_{0}:=\frac{1}{p}\sum_{i=0}^{p-1}\hat{f}^{i}_{*}\hat{\mu}_{0}^{p}.$$ Let $\mu_{1}, \mu_{0}$ be the projections of $\hat{\mu}_{1}^{+}, \hat{\mu}_{0}$. We have $h(f^{p},\mu_{1}^{p})=p\cdot h(f,\mu_{1})$. Note $$\hat{\mu}=\frac{1}{p}\sum_{i=0}^{p-1}\hat{f}^{i}_{*}\hat{\mu}^{p}$$ and consequently, we have a decomposition of $$\hat{\mu}=(1-\beta)\hat{\mu}_{0}+\beta \hat{\mu}^{+}_{1}.$$ Hence by Formula (\ref{finial result for f p}) $$\limsup_{k\to+\infty}h(f,\nu_{k})=\limsup_{k\to+\infty}\frac{h(f^{p},\nu^{p}_{k})}{p}\leq\beta h(f,\mu_{1})+(1-\beta)\frac{C(f^{p})}{p}\leq\beta h(f,\mu_{1})+(1-\beta)\alpha.$$

\end{proof}

\section{Appendix: auxiliary lemmas}

Given a subset $\mathbb{L}\subset \mathbb{N}$, we define its \emph{upper density} by $$\overline{d}(\mathbb{L}):=\limsup_{n\to+\infty}\frac{\#\left([0,n)\cap \mathbb{L}\right)}{n}.$$
\begin{Lemma}[Pliss Lemma, {\cite[Lemma 4.12 ]{MiZ18}}]\label{Pliss Lemma}
Given $\gamma_{1}<\gamma_{2}<C$, let $\beta:=\frac{\gamma_{2}-\gamma_{1}}{C-\gamma_{1}}$.  If a sequence $\{a_{n}\}$ of numbers satisfies 
\begin{itemize}
	\item $\lim_{n\to+\infty}\frac{1}{n}\sum_{i=0}^{n-1}a_{i}\geq\gamma_{2}$,
	\item $a_{n}\leq C$ for any $n\in\mathbb{N}$,
\end{itemize}
then there is a subset $\mathbb{L}\subset \mathbb{N}$ with $\overline{d}(\mathbb{L})\geq\beta$ such that for any $j\in\mathbb{L}$ and any $n\in\mathbb{N}$, $$\frac{1}{n}\sum_{i=0}^{n-1}a_{j+i}\geq\gamma_{1}.$$
\end{Lemma}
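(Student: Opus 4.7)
The plan is to normalize by setting $b_i := a_i - \gamma_1$, so that $b_i \leq C - \gamma_1$ and the Ces\`aro hypothesis becomes $\liminf_{n \to +\infty} B_n/n \geq \gamma_2 - \gamma_1 > 0$, where $B_n := \sum_{i=0}^{n-1} b_i$. I take $\mathbb{L}$ to be the canonical Pliss set
\[
\mathbb{L} := \bigl\{\, j \in \mathbb{N} : B_{j+n} \geq B_j \text{ for every } n \geq 1 \,\bigr\},
\]
which by construction satisfies $\frac{1}{n}\sum_{i=0}^{n-1} a_{j+i} = \gamma_1 + (B_{j+n} - B_j)/n \geq \gamma_1$ for every $j \in \mathbb{L}$ and every $n \geq 1$; the task therefore reduces to showing $\overline{d}(\mathbb{L}) \geq \beta$.

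The key tool is a descent-block decomposition of the complement. For each $j \notin \mathbb{L}$ set $T_j := \min\{ n \geq 1 : B_{j+n} < B_j \}$. By minimality of $T_j$ one checks that for every $k \in [j, j + T_j - 1]$ we have $B_k \geq B_j$, whence $B_{j+T_j} < B_j \leq B_k$ shows $k \notin \mathbb{L}$ as well; iterating from the smallest non-Pliss point then produces a tiling
\[
\mathbb{N} \setminus \mathbb{L} \;=\; \bigsqcup_{\ell} [j_\ell,\, j_\ell + T_{j_\ell} - 1], \qquad B_{j_\ell + T_{j_\ell}} < B_{j_\ell}.
\]
If now $N \in \mathbb{L}$, no block can straddle $N$, since $N$ lies outside every block; hence every block that meets $[0, N)$ is entirely contained in it, and telescoping block-by-block (each block contributing a strictly negative sum) together with the bound $b_i \leq C - \gamma_1$ on $\mathbb{L}$ gives
\[
B_N \;\leq\; (C - \gamma_1) \cdot \#(\mathbb{L} \cap [0, N)).
\]

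To finish, I need arbitrarily large $N \in \mathbb{L}$ for which $B_N/N$ is close to $\gamma_2 - \gamma_1$. Since $\liminf B_n/n > 0$ forces $B_n \to +\infty$, for each $j$ the infimum $\min_{k \geq j} B_k$ is attained at a finite index $k^*(j) \geq j$, and it is immediate that $k^*(j) \in \mathbb{L}$; hence $\mathbb{L}$ is unbounded. For any $\varepsilon > 0$ the hypothesis $\liminf B_n/n \geq \gamma_2 - \gamma_1$ gives $B_n/n \geq \gamma_2 - \gamma_1 - \varepsilon$ for all $n$ large, and in particular for all sufficiently large $N \in \mathbb{L}$. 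Inserting this into the displayed inequality yields
\[
\frac{\#(\mathbb{L} \cap [0, N))}{N} \;\geq\; \frac{B_N/N}{C - \gamma_1} \;\geq\; \beta - \frac{\varepsilon}{C - \gamma_1}
\]
for arbitrarily large $N$, and sending $\varepsilon \to 0$ gives $\overline{d}(\mathbb{L}) \geq \beta$.

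I expect the main delicate step to be establishing the descent-block tiling of $\mathbb{N} \setminus \mathbb{L}$ and, in particular, verifying that no block can straddle an $N \in \mathbb{L}$; once that structural fact is in place the telescoping inequality is immediate, and the density conclusion follows cleanly from the unboundedness of $\mathbb{L}$ combined with the asymptotic lower bound on $B_N/N$.
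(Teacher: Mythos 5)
Your proof is correct. Note that the paper does not give its own argument for this lemma --- it simply cites \cite[Lemma~4.12]{MiZ18} --- so there is no in-paper proof to compare against; but what you wrote is the standard Pliss argument (normalize so the desired conclusion becomes $B_{j+n}\geq B_j$, take the set of ``record'' indices, tile the complement into descent blocks with strictly negative sums, and bound the partial sum at a Pliss time $N$ by counting only the Pliss contributions), and every step checks out: the block $[j,j+T_j-1]$ lies outside $\mathbb L$ because $B_{j+T_j}<B_j\leq B_k$ for each $k$ in that range; no block straddles a Pliss point $N$ since blocks consist entirely of non-Pliss points; telescoping gives $B_N\leq (C-\gamma_1)\,\#(\mathbb L\cap[0,N))$; $\mathbb L$ is unbounded because $B_n\to+\infty$ forces each tail infimum to be attained at some $k^*(j)\geq j$, which is automatically a Pliss time; and passing to large $N\in\mathbb L$ with $B_N/N\geq\gamma_2-\gamma_1-\varepsilon$ yields $\overline d(\mathbb L)\geq\beta$ after $\varepsilon\to 0$. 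One small remark: the hypothesis gives you the existence of $\lim B_n/n\geq\gamma_2-\gamma_1$, which is slightly stronger than the $\liminf$ you used, but the $\liminf$ version suffices and is in fact the natural generality for the lemma.
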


The following consequence of Pliss Lemma is well known. We provide a proof for completeness.

\begin{Lemma}\label{Lem:abstract-ergodic}
	Assume that $(X,T)$ is a dynamical system. For any $\gamma\in(0,1)$, any ergodic measure $\nu$ and any measurable set $U\subset X$, if $\nu(U)\geq 1-\gamma^{2}$, then
	$\nu(\Omega)\ge 1-\gamma$ where $$	\Omega:=\left\{x\in X:~\forall \ell\ge1,~\frac{1}{\ell}\#\left\{k:~0\le k<\ell,~T^k(x)\in U\right\}\geq 1-\gamma\right\}.$$
\end{Lemma}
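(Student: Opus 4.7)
\textbf{Proof proposal for Lemma \ref{Lem:abstract-ergodic}.}

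The plan is to apply Pliss Lemma (Lemma \ref{Pliss Lemma}) pointwise along orbits, and then use the ergodic theorem to transfer a density statement about orbits of typical points into a measure statement about $\Omega$. First I observe that $\Omega$ is measurable, being a countable intersection of the sets $\left\{x:\frac{1}{\ell}\#\{0\le k<\ell:T^k(x)\in U\}\ge 1-\gamma\right\}$ indexed by $\ell\ge 1$, each of which is measurable since $U$ is.

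Fix a $\nu$-typical point $x$, and set $a_i:=\mathbf{1}_U(T^i x)\in[0,1]$. By Birkhoff's ergodic theorem applied to $\mathbf{1}_U$, for $\nu$-a.e.\ $x$ we have $\lim_{n\to+\infty}\frac{1}{n}\sum_{i=0}^{n-1}a_i=\nu(U)\ge 1-\gamma^2$. I now apply Lemma \ref{Pliss Lemma} with the choice $C=1$, $\gamma_2=1-\gamma^2$, and $\gamma_1=1-\gamma$. The resulting density threshold is
$$\beta=\frac{\gamma_2-\gamma_1}{C-\gamma_1}=\frac{(1-\gamma^2)-(1-\gamma)}{1-(1-\gamma)}=\frac{\gamma-\gamma^2}{\gamma}=1-\gamma.$$
Hence there exists $\mathbb{L}(x)\subset\mathbb{N}$ with $\overline{d}(\mathbb{L}(x))\ge 1-\gamma$ such that for every $j\in\mathbb{L}(x)$ and every $n\ge 1$,
$$\frac{1}{n}\sum_{i=0}^{n-1}a_{j+i}=\frac{1}{n}\#\bigl\{0\le i<n:T^i(T^j x)\in U\bigr\}\ge 1-\gamma.$$
Unwinding the definition of $\Omega$, this says precisely that $T^j(x)\in\Omega$ for every $j\in\mathbb{L}(x)$, i.e.\ the set $\{j\ge 0:T^j(x)\in\Omega\}$ has upper density at least $1-\gamma$.

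Finally, I invoke the ergodic theorem one more time, now for the indicator $\mathbf{1}_\Omega$: since $\nu$ is ergodic, for $\nu$-a.e.\ $x$,
$$\lim_{n\to+\infty}\frac{1}{n}\#\bigl\{0\le j<n:T^j(x)\in\Omega\bigr\}=\nu(\Omega).$$
The limit coincides with the upper density of the visit times, which we have just bounded below by $1-\gamma$ on a set of full $\nu$-measure. Hence $\nu(\Omega)\ge 1-\gamma$, completing the proof. There is no serious obstacle here; the only subtlety is choosing the Pliss parameters so that $\beta=1-\gamma$ exactly matches the density claimed for $\Omega$, and this forces the hypothesis $\nu(U)\ge 1-\gamma^2$ (which is why the statement is phrased with $\gamma^2$ rather than with a general parameter).
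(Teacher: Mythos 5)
Your proof is correct and follows essentially the same route as the paper: Birkhoff applied to $\mathbf{1}_U$, Pliss's lemma with $C=1$, $\gamma_2=1-\gamma^2$, $\gamma_1=1-\gamma$ giving density $\beta=1-\gamma$, then Birkhoff again applied to $\mathbf{1}_\Omega$. Your version is slightly more explicit (noting measurability of $\Omega$ and spelling out that the $\mathbb{L}(x)$ membership is exactly the condition $T^j(x)\in\Omega$), but the argument is the same.
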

\begin{proof}
By Birkhoff Ergodic Theorem, for $\nu$-a.e. $y$, $$\lim_{n\to+\infty}\frac{1}{n}\sum_{i=0}^{n-1}\chi_{U}\circ T^{i}(y)=\nu(U)\geq1-\gamma^{2}$$ where $\chi_{U}$ denotes the indicator function of $U$. Applying Lemma \ref{Pliss Lemma} with $$\gamma_{1}=1-\gamma,\quad \gamma_{2}=1-\gamma^{2},\quad C=1, \quad a_{n}=\chi_{U}\circ T^{n}(y),$$  we have $\beta=1-\gamma$ and a subset $\mathbb{L}(y)\subset \mathbb{N}$ with $\overline{d}(\mathbb{L}(y))\geq\beta=1-\gamma$ such that for any $j\in\mathbb{L}(y)$, $$T^{j}(y)\in \Omega.$$ Again, by Birkhoff Ergodic Theorem, we have $\nu(\Omega)\ge 1-\gamma$.
\end{proof}

\begin{Lemma}\cite[Lemma III.8]{Man83}\label{Mane Temper function}
	Let $(X,T)$ be a dynamical system and let $A:X\to (0,+\infty)$ be a measurable function. If there is a constant $K>1$ such that for any $x\in X$, $$K^{-1}\leq\frac{A(f(x))}{A(x)}\leq K,$$ then for any $x$ in a total measure set (i.e., a subset with full measure for any invariant measure), $$\lim_{n\to+\infty} \frac{1}{n}\log A(f^{n}(x))=0.$$

\end{Lemma}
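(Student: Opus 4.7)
The plan is to reduce to the ergodic case, apply Birkhoff's ergodic theorem to the bounded coboundary $\psi := \log A\circ T - \log A$, and then use Poincar\'e recurrence on the level sets $\{A\le N\}$ to pin down the limit to zero.

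First I would note that by the ergodic decomposition it suffices to prove, for every ergodic $T$-invariant probability measure $\mu$, that $\mu$-almost every $x$ satisfies $\frac{1}{n}\log A(T^n(x))\to 0$; then the intersection of such full-measure sets over all ergodic measures, together with the usual arguments to promote this to a single total-measure subset, yields the conclusion. Fix such a $\mu$. The hypothesis $K^{-1}\le A(T(x))/A(x)\le K$ gives $|\psi(x)|\le \log K$, so $\psi$ is bounded hence integrable. By the Birkhoff ergodic theorem applied to $\psi$ at the ergodic measure $\mu$, there exists a constant $c\in\mathbb{R}$ (namely $c=\int\psi\,d\mu$, though its explicit value will not matter) such that
\[
\frac{1}{n}\sum_{i=0}^{n-1}\psi(T^i(x))\;\longrightarrow\;c\qquad\mu\text{-a.e.}
\]
The sum telescopes to $\frac{1}{n}\bigl(\log A(T^n(x))-\log A(x)\bigr)$, and since $\log A(x)$ is a fixed finite number for a.e.\ $x$, the term $\frac{1}{n}\log A(x)$ tends to zero; therefore $\frac{1}{n}\log A(T^n(x))\to c$ for $\mu$-a.e.\ $x$.

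It remains to show $c=0$. The main (and only nontrivial) step is to rule out $c>0$ and $c<0$. Since $A$ takes values in $(0,+\infty)$, we have $X=\bigcup_{N\in\mathbb{N}}\{A\le N\}=\bigcup_{N\in\mathbb{N}}\{A\ge 1/N\}$, so for some $N$ both level sets $L_N^+:=\{A\le N\}$ and $L_N^-:=\{A\ge 1/N\}$ carry positive $\mu$-measure. Assume toward a contradiction that $c>0$. By Poincar\'e recurrence, for $\mu$-almost every $x\in L_N^+$ there is an infinite sequence $n_k\to\infty$ with $T^{n_k}(x)\in L_N^+$, whence $\log A(T^{n_k}(x))\le \log N$ and $\frac{1}{n_k}\log A(T^{n_k}(x))\le \frac{\log N}{n_k}\to 0$, contradicting $c>0$. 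The case $c<0$ is symmetric: work with $L_N^-$ and observe that along recurrence times $\log A(T^{n_k}(x))\ge -\log N$.

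The only real obstacle is the passage from a.e.\ convergence of the Ces\`aro averages of $\psi$ to the conclusion $c=0$, which is precisely where the finiteness of $A$ (equivalently, the fact that every point lies in some level set) together with Poincar\'e recurrence does the work. Everything else — ergodic decomposition, the Birkhoff application to the bounded function $\psi$, and the telescoping — is routine.
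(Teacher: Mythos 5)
The paper does not prove this lemma; it is cited directly from Ma\~{n}\'{e} \cite[Lemma III.8]{Man83}, so there is no in-paper argument to compare with. Your proof is correct. The reduction to ergodic measures via ergodic decomposition is legitimate (the set $\Omega:=\{x:\frac{1}{n}\log A(T^n x)\to 0\}$ is measurable and, once shown to have full measure for every ergodic measure, automatically has full measure for every invariant measure by integrating the ergodic decomposition). The Birkhoff step on the bounded function $\psi=\log A\circ T-\log A$ and the telescoping identity are exactly right, and they reduce the claim to showing $c=\int\psi\,d\mu=0$. Your Poincar\'e-recurrence argument closes this correctly: one cannot simply write $\int\psi\,d\mu=0$ by invariance because $\log A$ need not be $\mu$-integrable, and the level-set recurrence is a clean way around this. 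The only point I would urge you to make explicit is the intersection of the two full-measure conditions: one needs a point $x\in L_N^+$ (resp.\ $L_N^-$) that is simultaneously Birkhoff-generic for $\psi$ and Poincar\'e-recurrent to $L_N^\pm$; since $\mu(L_N^\pm)>0$ and both properties hold off a $\mu$-null set, such an $x$ exists.

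As a side remark, an alternative (and slightly more streamlined) way to see $c=0$ without invoking recurrence is to truncate: set $B_M:=\max(\min(\log A,M),-M)$ and $\psi_M:=B_M\circ T-B_M$. Since truncation is $1$-Lipschitz, $|\psi_M|\leq|\psi|\leq\log K$, and as $B_M$ is bounded and measurable, $\int\psi_M\,d\mu=0$ by invariance of $\mu$. Because $A$ is finite and positive everywhere, $\psi_M\to\psi$ pointwise, so by dominated convergence $c=\int\psi\,d\mu=\lim_M\int\psi_M\,d\mu=0$. Both routes are valid; yours is the more geometric one.
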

\begin{Lemma}\label{angle uniformly bounded from below}
		Let $f$ be a $C^{1}$ diffeomorphism on a compact manifold $M$. For any $N\in\mathbb{N}$, there is a constant $C>1$ such that for any $x\in M$ and any two unit vectors $v,w\in T_{x}M$, if 
		\begin{equation}\label{assumption in lemma angle uniformly bounded from below in Appendix}
			\|Df^{N}_{x}(v)\|> 2,\quad \|Df^{N}_{x}(w)\|<1,
		\end{equation} then $$\measuredangle(v,w)\geq C^{-1}.$$	
\end{Lemma}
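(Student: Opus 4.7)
The plan is to treat this as a purely linear-algebraic/continuity statement: compactness of $M$ and continuity of $Df^N$ give a uniform bound $K := \sup_{x \in M}\|Df^N_x\| < +\infty$, and then the hypothesis that $Df^N_x$ expands $v$ while contracting $w$ forces $v$ and $w$ to be ``geometrically separated'' in $T_x M$.

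Concretely, I would fix $x \in M$ and unit vectors $v, w \in T_x M$ satisfying (\ref{assumption in lemma angle uniformly bounded from below in Appendix}). Let $\theta := \measuredangle(v, w) \in [0, \pi]$. Inside the two-plane $\operatorname{Span}(v, w) \subset T_x M$, write
\[
w = \cos\theta \cdot v + \sin\theta \cdot u,
\]
where $u$ is a unit vector orthogonal to $v$. Applying $Df^N_x$ and using the reverse triangle inequality gives
\[
\|Df^N_x(w)\| \;\geq\; \cos\theta \cdot \|Df^N_x(v)\| \;-\; \sin\theta \cdot \|Df^N_x(u)\| \;\geq\; 2\cos\theta - K\sin\theta,
\]
where I have used $\|Df^N_x(v)\| > 2$ and $\|Df^N_x(u)\| \leq K$. (If $\theta > \pi/2$ I reduce to the case $\theta \leq \pi/2$ by replacing $w$ with $-w$, since the hypotheses and the conclusion depend only on the unoriented angle.)

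Combining this with the hypothesis $\|Df^N_x(w)\| < 1$ yields $K\sin\theta > 2\cos\theta - 1$. I would split into two regimes: if $\theta \geq \arccos(3/4)$, the angle is already bounded below by a universal constant; if $\theta < \arccos(3/4)$, then $2\cos\theta - 1 > 1/2$, so $\sin\theta > 1/(2K)$, giving $\theta > \arcsin\!\bigl(1/(2K)\bigr)$. Setting
\[
C := \frac{1}{\min\!\bigl(\arccos(3/4),\ \arcsin(1/(2K))\bigr)}
\]
then yields $\measuredangle(v, w) \geq C^{-1}$ uniformly in $x$.

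I do not anticipate any real obstacle: the only ingredient beyond basic trigonometry is the finiteness of $K$, which follows from compactness of $M$ and continuity of $x \mapsto Df^N_x$. The constant $C$ depends on $N$ only through $K = \|Df^N\|$, as stated in the lemma.
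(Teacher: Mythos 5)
Your proof is correct, but it takes a different route from the paper's. The paper gives a soft, two-line continuity argument: the function $(x,v,w)\mapsto \|Df^N_x(v)\|/\|Df^N_x(w)\|$ is continuous on the compact space of triples (it is well defined since $Df^N_x$ is invertible), and it equals $1$ on the diagonal $v=w$; hence by compactness there is a uniform $\delta>0$ so that $\measuredangle(v,w)<\delta$ forces the ratio to stay $\leq 2$, which the hypothesis $\|Df^N_x(v)\|>2>1>\|Df^N_x(w)\|$ immediately contradicts. Your argument instead decomposes $w=\cos\theta\,v+\sin\theta\,u$ inside $\operatorname{Span}(v,w)$ and runs an explicit norm estimate using the reverse triangle inequality and the uniform bound $K=\sup_x\|Df^N_x\|$. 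Both are valid; the paper's is shorter and non-constructive, while yours is quantitative and produces an explicit constant $C$ in terms of $K$. Two minor points on your write-up: the second $\geq$ in your chain should really be a strict $>$ (since $\|Df^N_x(v)\|>2$ strictly), and your case $\theta>\pi/2$ needs no reduction at all, since then $\theta>\pi/2>\arccos(3/4)\geq C^{-1}$ automatically; but neither affects correctness.
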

\begin{proof}
Given $N\in\mathbb{N}$, by continuity, there is $C>1$ such that for any $x\in M$ and any two unit vectors $v,w\in T_{x}M$, if $\measuredangle(v,w)< C^{-1}$, then $$\frac{\|Df^{N}_{x}(v)\|}{\|Df^{N}_{x}(w)\|}\leq 2.$$ If two unit vectors $v,w$ satisfy condition (\ref{assumption in lemma angle uniformly bounded from below in Appendix}), then we must have $\measuredangle(v,w)\geq C^{-1}$, otherwise the above inequality would imply  $$\|Df^{N}_{x}(v)\|\leq 2 \|Df^{N}_{x}(w)\|<2$$ which leads to a contradiction.
\end{proof}

\begin{Lemma}\label{ergodic decomposition of flow}
Let $\varphi=\{\varphi^t\}_{t\in\mathbb{R}}$ be a continuous flow on a compact metric space $X$ and let $\mu$ be an ergodic measure of $\varphi$. Then for (almost) every ergodic component $\nu$ of $\mu$ w.r.t. the time-one map $\varphi^{1}$, we have $$\mu=\int_{0}^{1}\varphi^{t}_{*}\left(\nu\right) dt$$ where $\varphi^{t}_{*}\left(\nu\right)$ is the  pushforward measure of $\nu$, i.e., $\varphi^{t}_{*}\left(\nu\right)\left(A\right):=\nu\left(\varphi^{-t}\left(A\right)\right).$
\end{Lemma}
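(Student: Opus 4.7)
The plan is to average the $\varphi^1$-ergodic components over one unit of time to produce a $\varphi$-invariant measure, and then pin it down as $\mu$ using the hypothesis that $\mu$ is $\varphi$-ergodic. Concretely, I would first write the $\varphi^1$-ergodic decomposition $\mu = \int \nu \, d\tilde{\mu}(\nu)$, where $\tilde{\mu}$ is a probability measure supported on $\varphi^1$-ergodic measures, and define, for each $\nu$ in the support,
\[
\bar{\nu} := \int_0^1 \varphi^t_* \nu \, dt.
\]
A direct change of variable, combined with $\varphi^1_* \nu = \nu$, shows that for any $s \in [0,1)$ one has $\varphi^s_* \bar{\nu} = \int_s^1 \varphi^u_* \nu \, du + \int_0^s \varphi^u_* \nu \, du = \bar{\nu}$, and since $\bar{\nu}$ is manifestly $\varphi^1$-invariant this upgrades to full $\varphi$-invariance.

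Next I would establish the identity $\mu = \int \bar{\nu}\,d\tilde{\mu}(\nu)$. For each fixed $t$, $\varphi$-invariance of $\mu$ gives $\mu = \varphi^t_* \mu = \int \varphi^t_* \nu \, d\tilde{\mu}(\nu)$. Integrating over $t \in [0,1]$ and swapping the order of integration (legitimate for positive measures) produces exactly $\mu = \int \bar{\nu}\,d\tilde{\mu}(\nu)$. Now I observe that $\bar{\nu}$ is actually $\varphi$-ergodic for every such $\nu$: if $A$ is any $\varphi$-invariant set, then $\varphi^{-t}A = A$ for all $t$, hence $\bar{\nu}(A) = \int_0^1 \nu(\varphi^{-t}A) \, dt = \nu(A)$; but $A$ is in particular $\varphi^1$-invariant, so by $\varphi^1$-ergodicity of $\nu$ we get $\nu(A) \in \{0,1\}$, whence $\bar{\nu}(A) \in \{0,1\}$.

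Finally, I would invoke uniqueness of the ergodic decomposition to close the argument. The identity $\mu = \int \bar{\nu}\,d\tilde{\mu}(\nu)$ expresses $\mu$ as an integral of $\varphi$-ergodic probability measures, so pushing $\tilde{\mu}$ forward along $\nu \mapsto \bar{\nu}$ gives a $\varphi$-ergodic decomposition of $\mu$; but $\mu$ is itself $\varphi$-ergodic, so its ergodic decomposition is $\delta_\mu$, and by uniqueness the pushforward equals $\delta_\mu$, i.e., $\bar{\nu} = \mu$ for $\tilde{\mu}$-almost every $\nu$, which is precisely the claim. The only genuinely delicate step is the third one — verifying that the averaged measure is $\varphi$-ergodic — because without it one would be stuck trying to compare an integral of $\varphi$-invariant (but possibly non-ergodic) measures against an ergodic $\mu$; if the ergodicity step fails, one can instead argue directly that any $\varphi$-ergodic $\bar{\nu}$ either coincides with $\mu$ or is mutually singular to it, and rule out singularity using $\mu = \int \bar{\nu}\,d\tilde{\mu}$.
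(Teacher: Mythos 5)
Your proof is correct. The overall scheme coincides with the paper's: average $\nu$ over one unit of time to form $\bar{\nu}=\int_0^1\varphi^t_*\nu\,dt$, verify flow-invariance by a change of variable plus $\varphi^1_*\nu=\nu$, and verify flow-ergodicity by noting that a flow-invariant set $A$ is in particular $\varphi^1$-invariant, so $\bar{\nu}(A)=\nu(A)\in\{0,1\}$. Where you and the paper diverge is in the final step, the identification $\bar{\nu}=\mu$. The paper takes the Birkhoff route: it introduces the flow-invariant set $\Lambda$ of $\mu$-generic points (i.e., points whose empirical measures under the flow converge to $\mu$), observes $\mu(\Lambda)=1$, picks an ergodic component $\nu$ with $\nu(\Lambda)=1$, and concludes that $\bar{\nu}$, being flow-ergodic and giving $\Lambda$ full measure, must have its own generic points in $\Lambda$ and so must equal $\mu$. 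You instead establish $\mu=\int\bar{\nu}\,d\tilde{\mu}(\nu)$ by Fubini from $\mu=\varphi^t_*\mu=\int\varphi^t_*\nu\,d\tilde{\mu}(\nu)$, and then invoke uniqueness of the flow-ergodic decomposition: since $\mu$ is flow-ergodic, its decomposition is $\delta_\mu$, so the pushforward of $\tilde{\mu}$ under $\nu\mapsto\bar{\nu}$ equals $\delta_\mu$, i.e., $\bar{\nu}=\mu$ for $\tilde{\mu}$-a.e.\ $\nu$. Your route avoids Birkhoff and the explicit selection of a ``good'' ergodic component; it is arguably cleaner and makes the ``almost every'' in the statement come out directly rather than as an afterthought. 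The paper's route is more concrete in that it identifies which ergodic components work (those with $\nu(\Lambda)=1$). Both are sound; the only genuinely new ingredient you supply is the Fubini identity and the appeal to uniqueness of ergodic decomposition in place of a generic-point argument.
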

\begin{proof}
	Let $\Lambda$ denote the flow invariant set of $x$ such that $$\frac{1}{T}\int_{0}^{T}\delta_{\varphi^t\left(x\right)}dt\xrightarrow{\text{weak $\ast$}}\mu.$$ By Birkhoff Ergodic Theorem, $\mu(\Lambda)=1$. Let $\nu$ be any ergodic component of $\mu$ w.r.t. the time-one map $\varphi^{1}$ such that $\nu(\Lambda)=1$ (note this holds for almost every ergodic component). By definition, one can check that $\int_{0}^{1}\varphi^{t}_{*}\left(\nu\right) dt$ is an invariant measure w.r.t. the flow. Given an invariant subset $A$ (i.e., $\varphi^{t}(A)=A, \forall\,t\in\mathbb{R}$), by the ergodicity of $\nu$ w.r.t. $\varphi^{1}$, we have either $\nu(A)=0$ or $\nu(A)=1$ which implies either $\varphi^{t}_{*}\left(\nu\right)\left(A\right)=0$ or $\varphi^{t}_{*}\left(\nu\right)\left(A\right)=1$. This proves ergodicity of $\int_{0}^{1}\varphi^{t}_{*}\left(\nu\right) dt$ w.r.t. the flow and consequently, it must coincide with $\mu$.
\end{proof}

\hspace*{\fill}
\begin{Acknowledgements}
	
	We would like to express our deepest gratitude to J\'{e}r\^{o}me Buzzi from Universit\'{e} Paris-Saclay for early sharing ideas from \cite{BCS25}. We sincerely thank Dawei Yang from Soochow University for his constructive suggestions and David Burguet from Sorbonne Universit\'{e} for introducing us his work on the continuity of Lyapunov exponents during his visit to Soochow University. We also appreciate Luo Chiyi, Zeya Mi, and Rui Zou for their valuable discussions.
\end{Acknowledgements}

\vskip 5pt

\flushleft{\bf Yuntao Zang} \\
\small Soochow College,  Soochow University, Suzhou, 215006, P.R. China\\
\textit{E-mail:} \texttt{ytzang@suda.edu.cn}\\

\end{document}